\documentclass[11pt]{amsart}

\usepackage[T1]{fontenc}
\usepackage[utf8]{inputenc}
\usepackage{lmodern}
\usepackage{microtype}
\usepackage[margin=1.02in]{geometry}
\usepackage{amsmath,amssymb,amsthm,mathtools}
\usepackage{aliascnt}
\usepackage{enumitem}
\usepackage{xcolor}
\usepackage[colorlinks=true,linkcolor=blue!55!black,citecolor=blue!55!black,urlcolor=blue!55!black]{hyperref}
\usepackage[nameinlink,noabbrev]{cleveref}
\allowdisplaybreaks

\numberwithin{equation}{section}
\newtheorem{theorem}{Theorem}[section]
\newaliascnt{proposition}{theorem}
\newtheorem{proposition}[proposition]{Proposition}
\aliascntresetthe{proposition}
\newaliascnt{lemma}{proposition}
\newtheorem{lemma}[lemma]{Lemma}
\aliascntresetthe{lemma}
\newaliascnt{corollary}{proposition}

\aliascntresetthe{corollary}
\theoremstyle{remark}
\newaliascnt{remark}{proposition}
\newtheorem{remark}[remark]{Remark}
\aliascntresetthe{remark}

\crefname{theorem}{Theorem}{Theorems}
\crefname{proposition}{Proposition}{Propositions}
\crefname{lemma}{Lemma}{Lemmas}
\crefname{corollary}{Corollary}{Corollaries}
\crefname{remark}{Remark}{Remarks}
\Crefname{theorem}{Theorem}{Theorems}
\Crefname{proposition}{Proposition}{Propositions}
\Crefname{lemma}{Lemma}{Lemmas}
\Crefname{corollary}{Corollary}{Corollaries}
\Crefname{remark}{Remark}{Remarks}

\newcommand{\R}{\mathbb R}
\newcommand{\one}{\mathbf 1}
\newcommand{\wt}{\widetilde}
\newcommand{\norm}[2][]{\lVert #2\rVert_{#1}}
\newcommand{\abs}[1]{\lvert #1\rvert}
\newcommand{\ip}[2]{\langle #1,#2\rangle}
\newcommand{\Dom}{\operatorname{Dom}}
\newcommand{\Var}{\operatorname{Var}}
\newcommand{\calE}{\mathcal E}
\newcommand{\pv}{\operatorname{p.v.}}

\title[Fractional Laplacian on the interval]{Eigenvalues and eigenfunctions 
of the fractional Laplacian on the interval}
\author{Cheng Zhang}
\address{Yau Mathematical Sciences Center, Tsinghua University, Beijing, 100084, P.R. China}
\email{czhang98@tsinghua.edu.cn}
\date{}

\begin{document}
\begin{abstract}
We prove a three-term asymptotic formula for the  eigenvalues of the  fractional
Laplacian  on the bounded interval $(-1,1)$. This improves  the eigenvalue asymptotics of Kulczycki--Kwa\'snicki--Ma{\l}ecki--St\'os \cite{KulczyckiKwasnickiMaleckiStos} and Kwa\'snicki \cite{KwasnickiInterval},  and   confirms the conjectural $O_\alpha(n^{-2})$ remainder suggested by the numerical simulations of Kaleta--Kwa\'snicki--Ma{\l}ecki \cite{KaletaKwasnickiMalecki}. Moreover, we prove  that the normalized
eigenfunctions are  bounded uniformly in the eigenvalue index $n$ and the fractional order $\alpha$. This settles the conjecture proposed by Kwa\'snicki \cite{KwasnickiInterval} through numerical experiments. Furthermore, we prove that the $n$-th eigenfunction has exactly $n-1$ zeros in the interval $(-1,1)$ and every zero is simple, and hence there are exactly $n$ nodal domains. A key ingredient in the proof is an explicit representation of the eigenfunction.
\end{abstract}
\maketitle

\section{Introduction}
\label{sec:intro}

Let $I=(-1,1)$ and $0<\alpha<2$.  Let
\begin{equation}\label{eq:singular-integral}
 \mathcal A=(-\Delta)^{\alpha/2},
 \qquad
 \mathcal Af(x)=c_\alpha\,\pv\!\int_\R
 \frac{f(x)-f(y)}{\abs{x-y}^{1+\alpha}}\,dy,
 \qquad
 c_\alpha=\frac{\Gamma(1+\alpha)\sin(\pi\alpha/2)}{\pi}.
\end{equation}
For $\widehat f(\xi)=\int_\R e^{-ix\xi}f(x)\,dx$, its symbol is
$\abs\xi^\alpha$.  The equivalence of the Fourier, singular-integral, and
quadratic-form definitions is \cite{KwasnickiDefinitions}.
We write $\mathcal A_I$ for the zero-exterior realization of $\mathcal A$ on $I$.  For $f\in C_c^\infty(I)$, $\mathcal A_If$ is the restriction of $\mathcal Af$ to $I$.  Its Friedrichs extension, denoted by the same symbol, is self-adjoint on $L^2(I)$ and has discrete spectrum.  Every eigenvalue is simple by \cite{FallEtAl,kwa23,yzm25}. Every eigenfunction $\varphi_n\in C^\infty(I)$ by \cite{RosOtonSerraRegularity,BorthagarayDelPezzoMartinez, GrubbSpectral}.
Thus
\begin{equation}\label{eq:eigensystem}
	\mathcal A_I\varphi_n=\lambda_n\varphi_n,
	\qquad \norm[L^2(I)]{\varphi_n}=1,
	\qquad 0<\lambda_1<\lambda_2<\cdots.
\end{equation}
 Throughout, $\lambda_n$ and $\varphi_n$ denote the $n$th eigenvalue and the $n$th real
$L^2(I)$-normalized eigenfunction corresponding to the order
$\alpha$.  Their dependence on $\alpha$ is suppressed in the notation. It is important to note that $\mathcal A_I$ does not
coincide with the fractional power of the Dirichlet Laplacian on $I$.

The spectral problem studied in this paper has a long history. It is of great interest in physics \cite{ZRK}, and there is a considerable amount of related, mostly numerical,
research in the physics literature. For general background, we refer to the surveys on fractional Sobolev spaces
and definitions of the fractional Laplacian
\cite{DNPV,KwasnickiDefinitions,LischkeEtAl}, and to
the survey \cite{FrankSurvey} of eigenvalue bounds and asymptotics. By translation and scaling, all results on $I=(-1,1)$ extend to any bounded open interval, with the eigenvalues and  eigenfunctions rescaled accordingly.

The one-term Weyl law for $\lambda_n$ 
goes back to Blumenthal--Getoor
\cite{BlumenthalGetoor}.  
The two-sided estimate 
$$
 \frac12\left(\frac{n\pi}{2}\right)^\alpha
 \le \lambda_n\le
 \left(\frac{n\pi}{2}\right)^\alpha$$ is due to DeBlassie \cite{De} and Chen--Song \cite{ChenSongEigen}.  
  Kwa\'snicki \cite[Theorem~1]{KwasnickiInterval} proved the asymptotic formula for every $0<\alpha<2$,
\begin{equation}\label{eq:Kwasnicki-two-term}
 \lambda_n=
 \left(\frac{n\pi}{2}-\frac{(2-\alpha)\pi}{8}\right)^\alpha
 +O_\alpha(n^{-1}).
\end{equation}
The important case $\alpha=1$ is due to Kulczycki--Kwa\'snicki--Ma{\l}ecki--St\'os \cite[Theorem~6]{KulczyckiKwasnickiMaleckiStos}. It
was later extended to  more general operators $\psi(-\Delta)$ by
Kaleta--Kwa\'snicki--Ma{\l}ecki
\cite{KaletaKwasnickiMalecki}.  For the fractional case, numerical simulations \cite[Remark~1.5]{KaletaKwasnickiMalecki} predicted an $O_\alpha(n^{-2})$ remainder in \eqref{eq:Kwasnicki-two-term}.  The forthcoming  \Cref{thm1} establishes a three-term asymptotic formula, thereby confirming this prediction.

The eigenfunction bounds have been extensively studied. See e.g.  \cite{sogge88,smithsogge,HuangSireZhang,jmpa, DeBlassieMendez,ChenWethLogarithmic,FeulefackJarohsWeth}.  
Kulczycki--Kwa\'snicki--Ma{\l}ecki--St\'os \cite[Corollary~5]{KulczyckiKwasnickiMaleckiStos} first obtained the uniform eigenfunction bound for $\alpha=1$
\begin{equation}
	\norm[L^\infty(I)]{\varphi_n}\le 3,\  \ n\ge1.
\end{equation}
Kwa\'snicki  \cite[Proposition~2]{KwasnickiInterval} generalized it  for $\alpha\ge\tfrac12$
 and his numerical
experiments suggest that the same uniform boundedness remains valid
for $0<\alpha<\tfrac12$.  The forthcoming \Cref{thm2} settles this conjecture with a constant uniform simultaneously in \(\alpha\) and \(n\).

The nodal set of $\varphi_n$  is known for some small $n$. For example, the first eigenfunction $\varphi_1$ has no zero in $I$, see e.g. 
\cite{BanuelosKulczyckiMendezHernandez,ServadeiValdinociWeakViscosity,DelPezzoBonderLopez}.  The second eigenfunction $\varphi_2$ is antisymmetric and has exactly one zero in $I$, see \cite{BanuelosKulczyckiSteklov, KwasnickiInterval, DydaKuznetsovKwasnicki, FallFeulefackTemgouaWeth}. The third eigenfunction $\varphi_3$ has exactly 2 zeros in $I$, see \cite{BanuelosKulczyckiSteklov, FallWeth}. For  $\alpha=1$ and every $n\ge1$,
Ba\~nuelos--Kulczycki
\cite[Proposition~5.5]{BanuelosKulczyckiSteklov}
proved that $\varphi_n$ has at most $2n-2$ zeros in $I$. For every $0<\alpha<2$ and  $n\ge1$, Yang--Zhang--Ma \cite{yzm25} proved that $\varphi_n$ changes its signs at most $2n-2$ times in $I$.  The forthcoming   \Cref{thm:nodal} shows that for every $0<\alpha<2$ and $n\ge1$, $\varphi_n$ has exactly $n-1$ zeros in $I$, and every zero is simple, and hence there are exactly $n$ nodal domains.

\subsection{Main results}
Let
\begin{equation}\label{eq:parameters}
 \theta=\frac{(2-\alpha)\pi}{8},
 \qquad
 \mu_n=\frac{n\pi}{2}-\theta,
 \qquad
 \kappa_\alpha=\frac{\alpha c_\alpha}{2^{\alpha+1}}.
\end{equation}
For $0<\alpha<1$, define
\begin{equation}\label{eq:A-alpha}
 A_\alpha=-\frac{2\kappa_\alpha^2}{\alpha(1+2\alpha)},\qquad  B_\alpha=
 \frac{\alpha\Gamma(1+2\alpha)\cos(\pi\alpha)}
 {\pi2^{1+2\alpha}}\tan\frac{\pi\alpha}2.
\end{equation}
For $1<\alpha<2$, define
\begin{equation}\label{eq:eta}
 \eta_\alpha=(1+\alpha)\kappa_\alpha\cot\frac{\pi}{\alpha}.
\end{equation}
Since $\mu_n\approx n$, the asymptotic formula \eqref{eq:Kwasnicki-two-term} is equivalent to
$\lambda_n=\mu_n^\alpha+O_\alpha(\mu_n^{-1})$. 
\begin{theorem}\label{thm1}
For every $0<\alpha<2$, as $n\to\infty$,
\[
 \lambda_n=\mu_n^\alpha
 +(-1)^n\kappa_\alpha\mu_n^{-2}+\mathcal R_n(\alpha).
\]
The remainder has three regimes.
\begin{enumerate}[label=\textup{(\roman*)},leftmargin=2.2em]
\item If $0<\alpha<1$, then
\begin{equation}\label{eq:main-small}
 \mathcal R_n(\alpha)
 =\bigl(A_\alpha+(-1)^nB_\alpha\bigr)\mu_n^{-2-\alpha}
 +o_\alpha(\mu_n^{-2-\alpha}).
\end{equation}
\item If $\alpha=1$, then
\begin{equation}\label{eq:main-critical}
 \mathcal R_n(1)
 =\frac{(-1)^{n+1}}{2\pi^2}\mu_n^{-3}\log\mu_n
 +O(\mu_n^{-3}).
\end{equation}
\item If $1<\alpha<2$, then
\begin{equation}\label{eq:main-large}
 \mathcal R_n(\alpha)
 =(-1)^n\eta_\alpha\mu_n^{-3}+o_\alpha(\mu_n^{-3}).
\end{equation}
\end{enumerate}
\end{theorem}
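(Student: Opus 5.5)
The plan follows Kwaśnicki's approach and sharpens it at every stage. Let $F$ be the half-line eigenfunction: the bounded solution of $\mathcal A F=F$ on $(0,\infty)$ with $F=0$ on $(-\infty,0]$. It is known explicitly through a Laplace-type integral:
\[
F(x)=\sin(x+\theta)-G(x),\qquad G(x)=\int_0^\infty e^{-xs}\,\gamma(s)\,ds,
\]
where $\gamma\ge0$ is an explicit density.

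Since $\mathcal A_I$ commutes with reflection, we work in the symmetric and antisymmetric classes separately. The ansatz is
\[
\widetilde\varphi_n(x)=F\bigl(\mu(1+x)\bigr)\pm F\bigl(\mu(1-x)\bigr),
\]
with $\mu$ a free parameter close to $\mu_n$. Using the scaling of $\mathcal A$, we compute
\[
(\mathcal A_I-\mu^\alpha)\widetilde\varphi_n=\mu^\alpha\,\mathcal E_\mu .
\]
Here $\mathcal E_\mu$ comes only from the interaction of each boundary profile with the opposite endpoint, that is, from the exterior values of $F(\mu(1\mp x))$ for $x$ beyond the far endpoint.

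The new ingredient compared with the two-term result is to expand this interaction to the next orders. There are two contributions.

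\emph{Non-oscillatory part $G$.} The tail $G(t)\sim c\,t^{-1-\alpha}$ as $t\to\infty$ produces a nonlocal correction of size about $\mu^{-1-\alpha}$ near the opposite endpoint.

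\emph{Oscillatory part.} The term $\sin(x+\theta)$, extended beyond the far endpoint, enters through the kernel $c_\alpha|x-y|^{-1-\alpha}$. Integrating by parts against the oscillation gives boundary terms $\mu^{-1}$, $\mu^{-2}$, and so on.

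The leading term is then fixed by a Rayleigh–Schrödinger, or Feshbach, type correction. We project $\mathcal E_\mu$ onto $\widetilde\varphi_n$ and choose $\mu$ so that the first-order term cancels. With $\mu=\mu_n$ this yields
\[
\lambda_n=\mu_n^\alpha+\mu_n^\alpha\frac{\langle\mathcal E,\widetilde\varphi_n\rangle}{\|\widetilde\varphi_n\|^2}+O\Bigl(\mu_n^{\alpha}\frac{\|\mathcal E\|^2}{\mathrm{gap}}\Bigr).
\]
The spectral gap is of order $\mu_n^{\alpha-1}$. To identify the index we use the a priori two-term result \eqref{eq:Kwasnicki-two-term} and simplicity of the eigenvalues, which show that the approximate eigenvalue near $\mu_n^\alpha$ is exactly $\lambda_n$.

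The main computation is the inner product $\langle\mathcal E,\widetilde\varphi_n\rangle$. It is a double integral of $c_\alpha|x-y|^{-1-\alpha}$ with $x$ in $I$ and $y$ outside, against $\sin(\mu(1\pm x)+\theta)\sin(\mu(1\pm y)+\theta)$. Its asymptotic expansion in $\mu$ produces:
\begin{enumerate}[label=\textup{(\alph*)},leftmargin=2.2em]
\item the term $(-1)^n\kappa_\alpha\mu_n^{-2}$, since the sign of the reflection pairing is $(-1)^n$;
\item a non-oscillatory term $A_\alpha\mu_n^{-2-\alpha}$, from the product of the two $G$-tails and from the second-order (squared-norm) Feshbach term, which explains the $\kappa_\alpha^2$ in $A_\alpha$;
\item an oscillatory term $(-1)^nB_\alpha\mu_n^{-2-\alpha}$, from the cross term between $G$'s $t^{-1-\alpha}$ tail and the far-end sine wave;
\item the next oscillatory boundary term $(-1)^n\eta_\alpha\mu_n^{-3}$, from the next integration by parts.
\end{enumerate}
The three regimes come from comparing the exponents $2+\alpha$ and $3$. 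For $\alpha<1$ the $\mu^{-2-\alpha}$ terms dominate. For $\alpha>1$ the $\mu^{-3}$ term dominates. At $\alpha=1$ the two collide: the coefficients have a pole at $\alpha=1$ (visible in $\cot(\pi/\alpha)$ and in $\cos(\pi\alpha)\tan(\pi\alpha/2)$), and it resolves into $\mu^{-3}\log\mu$. The constant $1/(2\pi^2)$ is checked as the residue of these poles. The case $\alpha=1$ is handled either directly, with a logarithmic integral, or by a careful limit.

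The expected main obstacle is controlling the error. The naive bound $\|\mathcal E\|^2/\mathrm{gap}\sim\mu^{-2-2\alpha}\cdot\mu^{1-\alpha}$ is not $o(\mu^{-2-\alpha})$ for every $\alpha$. To get around this, the first step is to construct a refined second-order approximate eigenfunction. Add to $\widetilde\varphi_n$ the explicit boundary-layer corrections $\mu^{-1-\alpha}F_1(\mu(1\pm x))$, where $F_1$ solves the half-line problem with the leading interaction as data; this solution is again explicit via the representation of $F$. With this corrected function the residual becomes $O(\mu^{-3-\epsilon})$ in $L^2$, and then the standard estimate $\operatorname{dist}(\mu^\alpha,\operatorname{spec})\le\|\mathrm{residual}\|/\|\widetilde\varphi_n\|$ suffices.
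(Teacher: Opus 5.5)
Your proposal has a genuine gap at the hardest point of the theorem, the error control, and the ansatz itself is also wrong.

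\emph{The ansatz.} $F(\mu(1+x))\pm F(\mu(1-x))$ counts the sine wave twice. With the matching sign and $\mu=\mu_n$, it equals $2S_n(x)-G_\alpha(\mu_n(1+x))-(-1)^{n+1}G_\alpha(\mu_n(1-x))$ on $I$, and its boundary values are $\approx\pm\sin\theta\neq0$. So it is not in $\wt H^{\alpha/2}(I)$ for $\alpha\ge1$. Its residual also contains an $O(1)$ term behaving like $(1-|x|)^{-\alpha}$ at the endpoints. You must either glue with cutoffs (the paper's $\wt\phi_n$) or subtract the full-line sine (the paper's $w_n$).

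\emph{The naive error bound.} For the glued quasimode, $\mu_nz_n\to R_{\alpha,n}$ in $L^2(I)$, where $R_{\alpha,n}$ is an $O(1)$ function on the whole interval. Hence $\|z_n\|_{L^2}\asymp\mu_n^{-1}$, and $\|z_n\|^2/\mathrm{gap}\asymp\mu_n^{-1-\alpha}$, which is larger than the bound you wrote. This is too large for the third term in every regime, and even for the $\mu_n^{-2}$ term when $\alpha\le1$.

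\emph{Your remedy.} It does not work as stated.
\begin{itemize}
\item Correctors $\mu^{-1-\alpha}F_1(\mu(1\pm x))$ with a fixed decaying profile are $O(\mu^{-2-2\alpha})$ away from the endpoints. The residual to be removed is a smooth $O(\mu_n^{-1})$ function of the macroscopic variable. What removes it is a global change $\approx\mu_n^{-1-\alpha}R_{\alpha,n}$, which is what dropping the cutoffs does: $w_n-\wt\phi_n\approx\mu_n^{-1-\alpha}R_{\alpha,n}$.
\item Even that step gains only a factor $\mu_n^{-\alpha}$. For $\alpha<1$ the residual of $w_n$ has size $\mu_n^{-1-\alpha}$ with a singularity like $(1\pm x)^{-\alpha}$, which is not in $L^2$ for $\alpha\ge\frac12$. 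For $\alpha\ge1$, $w_n$ has jumps of size $G_\alpha(2\mu_n)$ at $\pm1$ and is not in the form domain.
\item At $\mu=\mu_n$, no trial function can have residual below $|\lambda_n-\mu_n^\alpha|\sim\kappa_\alpha\mu_n^{-2}$. An $O(\mu^{-3-\epsilon})$ residual therefore requires a detuned $\mu$, breaking $2\mu+2\theta=n\pi$, that already encodes the three-term value. You never derive the solvability conditions that would produce $\kappa_\alpha,A_\alpha,B_\alpha,\eta_\alpha$.
\item The half-line problem $(\mathcal A-1)F_1=f$ is posed in the continuous spectrum. Existence, decay and the claimed explicit formula all need proof.
\end{itemize}
Consequently (a)--(d) are attributions, not computations, and some are inaccurate. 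For instance, the $\cot(\pi/\alpha)$ in $\eta_\alpha$ comes from $I_\alpha'(0)$, through the first moment $M_{1,\alpha}$ and the second tail coefficient $K_{2,\alpha}$ of $G_\alpha$. It does not come from a further integration by parts against the sine.

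\emph{What the paper does instead.} It never builds a high-order quasimode. For every $\alpha$ it proves $|\langle z_n,\varphi_k\rangle|\le C_\alpha/(\mu_nk)$. This uses the BV structure of $z_n$, antiderivative bounds on $\varphi_k$, and a bootstrap on $\|u_k-\varphi_k\|$. The bound makes the Rayleigh quotient of $\wt\phi_n$ accurate to $O(\mu_n^{-2-\alpha})$; see \eqref{eq:ordinary-transfer}.
\begin{itemize}
\item For $\alpha\ge1$ we have $2+\alpha\ge3$, so it suffices to expand $z_n$ one order further and pair it with $\wt\phi_n$.
\item For $0<\alpha<1$ that error is exactly the size of the target. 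The paper switches to the uncut $w_n$ and uses the exact form identity \eqref{formid}. It proves \eqref{eq:filtered} by comparison with $\psi_n=\wt\phi_n+(\mathcal A_I+\mu_n^\alpha)^{-1}z_n$. Then $A_\alpha$ and $B_\alpha$ come from a Laplace-transform evaluation of $D_n$, with $Q_\alpha$ identified by matching against the two-term formula.
\end{itemize}

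Your residue heuristic at $\alpha=1$ is a correct consistency check: the formulas give $B_\alpha\approx-\eta_\alpha\approx-1/(2\pi^2(1-\alpha))$ near $\alpha=1$. It cannot replace a proof of (ii), because none of the $o_\alpha$ remainders is uniform in $\alpha$.
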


\begin{theorem}\label{thm2}
Every normalized eigenfunction in \eqref{eq:eigensystem} satisfies
\begin{equation}\label{eq:main-Linfty}
 \norm[L^\infty(I)]{\varphi_n}\le2,
 \qquad 0<\alpha<2,\quad n\ge1.
\end{equation}
\end{theorem}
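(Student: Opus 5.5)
The plan is to exploit the explicit representation of the eigenfunction promised in the abstract, in the spirit of Kwa\'snicki's approach \cite{KwasnickiInterval}. Write $\varphi_n=\wt\varphi_n+r_n$, where $\wt\varphi_n$ is the explicit approximation
\[
 \wt\varphi_n(x)=\tfrac{1}{\sqrt{\smash[b]{1}}}\Bigl(\pm F(\mu_n(1+x))\pm F(\mu_n(1-x))\Bigr).
\]
Here $F(s)=\sin(s+\theta)-G(s)$ is the half-line eigenfunction of $(-\Delta)^{\alpha/2}$ on $(0,\infty)$. The correction $G$ is completely monotone and given by a Laplace transform $G(s)=\int_0^\infty e^{-st}\gamma(t)\,dt$ with an explicit nonnegative density $\gamma$. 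The key facts are $0\le G\le G(0)=\sin\theta$ and $\int_0^\infty G\,ds<\infty$. The exact eigenfunction should then be represented as $\wt\varphi_n$ plus a remainder controlled in $L^2$ and in $L^\infty$. This is done by iterating the reflection between the two endpoints, which gives a convergent series of Laplace-type corrections.

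The steps are as follows.

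\textbf{Step 1.} Use the identity $\sin(s+\theta)-G(s)$ together with $0\le G\le\sin\theta<1$ to get the pointwise bound $\abs{F}\le 1+\sin\theta$. More precisely, since $G(s)$ vanishes exactly where the sine term needs help, I will aim for the sharper estimate $\abs{F(s)}\le 1$. This follows from $G(s)\le \sin(s+\theta)+1$ and $G(s)\ge \sin(s+\theta)-1$. The second inequality is trivial. The first holds because $G$ is decreasing with $G(0)=\sin\theta$, and $\sin(s+\theta)+1\ge\sin\theta$ until $s$ is of order one, after which $G$ is already small. This needs a quantitative bound on $G$, uniform in $\alpha$, from the explicit $\gamma$.

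\textbf{Step 2.} Bound the sum of the two reflected pieces. Since each of the two terms is at most $1$ in absolute value, $\abs{\wt\varphi_n}\le 2\cdot$(normalizing constant)$\cdot$. A computation of $\norm[L^2(I)]{\wt\varphi_n}$ shows the normalizing constant is $1+O(\mu_n^{-1})$, uniformly in $\alpha$. In fact it is $\int_{-1}^1\sin^2\approx 1$, with the $G$-contributions bounded by $\int G/\mu_n$.

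\textbf{Step 3.} Control the remainder $r_n=\varphi_n-\wt\varphi_n$ in $L^\infty$. I would use the explicit representation: the exact eigenfunction differs from $\wt\varphi_n$ by terms of the form $\int_0^\infty e^{-\mu_n(1\pm x)t}\,\rho_n(t)\,dt$. Here $\rho_n$ solves a fixed-point equation whose operator norm is $\lesssim\mu_n^{-1}$, uniformly in $\alpha$. These terms are bounded by $C/\mu_n$, and combined with \Cref{thm1}-type control of $\mu_n$ this gives $\norm[L^\infty]{\varphi_n}\le 2\cdot\frac{1+C/\mu_n}{1-C/\mu_n}$.

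\textbf{Step 4.} The preceding steps only yield $\norm[L^\infty(I)]{\varphi_n}\le 2+C\mu_n^{-1}$, which is not quite enough. The main obstacle is therefore obtaining the clean constant $2$ for all $n$, including small $n$, with constants uniform as $\alpha\to0$ or $\alpha\to2$. For small $n$, in the range $\mu_n\le M$, I would instead use a direct argument to avoid the loss. This can be the semigroup/ultracontractivity bound $\abs{\varphi_n(x)}\le e^{\lambda_n t}\norm[L^\infty]{P_t^I\varphi_n}$. Alternatively, one can use the trick $\abs{\varphi_n}\le \lambda_n\int G_I(x,y)\abs{\varphi_n(y)}dy$ with the explicit Green function of the interval, which gives $\norm[\infty]{\varphi_n}\le\lambda_n\norm[L^2]{G_I(x,\cdot)}$. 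The latter is explicit and can be checked to be $\le2$ for small $n$ using $\lambda_n\le(n\pi/2)^\alpha$. For large $n$, Step 1 should be sharpened to $\abs{F}\le 1-c$ away from the sine peaks, so that the $O(\mu_n^{-1})$ error is absorbed. The hardest part will be making the constants in Step 3 explicit and uniform in $\alpha\in(0,2)$ near the endpoints.
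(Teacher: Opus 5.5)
There is a genuine gap. Step 3 carries the entire difficulty and is only asserted, and several of the other steps fail as stated.

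\textbf{Steps 1 and 2.} Step 1 is false. Since $\gamma_\alpha>0$, we have $G_\alpha(s)>0$ for all $s>0$. At $s_0=3\pi/2-\theta$ this gives $F_\alpha(s_0)=-1-G_\alpha(s_0)<-1$. The inequality $G_\alpha(s)\le\sin(s+\theta)+1$ that you need fails exactly where its right-hand side vanishes. In Step 2, \eqref{eq:match} shows that the uncut sum $F_\alpha(\mu_n(1+x))+(-1)^{n+1}F_\alpha(\mu_n(1-x))$ contains $S_n$ twice, so its $L^2(I)$ norm is close to $2$, not $1$. A quasimode needs the partition $q(x)+q(-x)=1$ as in \eqref{eq:qmode}. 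These two points are repairable: a correct quasimode satisfies $\abs{\wt\phi_n}\le1+\sin\theta$.

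\textbf{Step 3.} This step is not repairable as written. You never write down an equation for $\rho_n$, and nothing in your sketch yields a contraction of norm $\lesssim\mu_n^{-1}$ uniformly in $\alpha$. The perturbative information actually available (\Cref{prop:qmode}) is $L^2$-closeness of $u_n$ and $\varphi_n$ with $\alpha$-dependent constants. This is because it rests on the gaps \eqref{eq:gaps}, which degenerate: for fixed $k\ge2$, \eqref{eq:sandwich} gives $0<\lambda_{k+1}-\lambda_k\le((k+1)\pi/2)^\alpha-((k-1)\pi/2)^\alpha\to0$ as $\alpha\to0$. Upgrading $L^2$-closeness to $L^\infty$-closeness through the eigenfunction expansion would need uniform sup bounds on the $\varphi_k$, which is the theorem itself. \Cref{thm1} cannot supply uniformity either, since its remainders are $o_\alpha$.

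\textbf{Step 4.} Because of the above, any ``large $n$'' threshold you can reach depends on $\alpha$, so Step 4 must handle a range of $n$ that is not uniformly bounded. Your fallbacks fail there.
\begin{itemize}
\item The interval Green function behaves like $\abs{x-y}^{\alpha-1}$ near the diagonal. Hence $G_I(x,\cdot)\notin L^2$ for $\alpha\le1/2$, which includes the whole range $0<\alpha<1/2$ left open by Kwa\'snicki.
\item The heat-kernel bound $e^{\lambda_nt}\bigl(\Gamma(1+1/\alpha)/\pi\bigr)^{1/2}(2t)^{-1/(2\alpha)}$, even optimized at $t=1/(2\alpha\lambda_n)$, is at least $(2\pi/\alpha)^{1/4}\sqrt{(n-1)/2}$ by Stirling and \eqref{eq:sandwich}. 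This exceeds $2$ for small $\alpha$ already at $n=2$.
\end{itemize}

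\textbf{The missing idea.} What is missing is an exact, non-perturbative representation. The paper proves that $\int_Ie^{tx}\varphi_n\,dx>0$ for all $t>0$ once $B_+(\varphi_n)>0$ (\Cref{lem:nodal-Laplace}). This makes the exterior source $\nu_n=\lambda_n\varphi_n-\mathcal A\varphi_n$ positive on $(1,\infty)$. Combined with the Stieltjes representation of $(z-a)/(z^{\alpha/2}-a^{\alpha/2})$ with positive density (\Cref{lem:nodal-multiplier}), the identity $\widehat\nu_n=(\lambda_n-\abs\xi^\alpha)\widehat\varphi_n$ becomes $(\partial_x^2+k_n^2)(\varphi_n+\mathcal H_n)=0$ on $I$. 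Hence $\varphi_n=c_nT_n-\mathcal H_n$ at the \emph{exact} frequency $k_n=\lambda_n^{1/\alpha}$, with $\mathcal H_n(x)=\int_0^\infty\mathcal K_n(t)\bigl(e^{tx}+(-1)^{n+1}e^{-tx}\bigr)\,dt$ and $\mathcal K_n>0$ (\Cref{prop:nodal-representation}).

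No smallness is then needed, only structure: $\mathcal H_n,\mathcal H_n',\mathcal H_n''>0$ on $(0,1)$ and $\mathcal H_n(1)=c_nT_n(1)\le\abs{c_n}$. For $n\ge5$, set $A=\abs{c_n}$, $r=(n-2)\pi/(2k_n)\in(3/5,1)$ and $h=\mathcal H_n(r/2)$.
\begin{itemize}
\item The normalization gives $r(A^2+h^2)\le1$, after an integration by parts showing the cross term on $(0,r)$ is nonpositive.
\item Concavity of $\varphi_n$ on $(r,1)$ and convexity of $\mathcal H_n$ give $M\le\frac{2}{2-r}\bigl(A+(1-r)h\bigr)$.
\item Cauchy--Schwarz then yields $M^2\le\frac{4(1+(1-r)^2)}{r(2-r)^2}\le4$.
\end{itemize}
The cases $n\le4$ follow from concavity and variation of constants. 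Because no error terms ever appear, the constant $2$ is automatically uniform in $n$ and $\alpha$. A quasimode-plus-remainder scheme cannot deliver this without a genuinely new uniform estimate.
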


\begin{theorem}\label{thm:nodal}
For every $0<\alpha<2$ and $n\ge1$, the size of the nodal set of $\varphi_n$ is 
\begin{equation}
	\#\{x\in I:\varphi_n(x)=0\}=n-1,
\end{equation}
and every zero is simple, and hence there are exactly $n$ nodal domains.
\end{theorem}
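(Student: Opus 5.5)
The plan is to deduce the nodal statement from the explicit representation of $\varphi_n$ that the abstract announces, combined with a continuity argument in the index. The representation expected, following Kwa\'snicki's approximation, is
\[
 \varphi_n(x)=\tilde\varphi_n(x)+r_n(x),\qquad
 \tilde\varphi_n(x)=\cos\bigl(\mu_n x-\tfrac{(n-1)\pi}{2}\bigr)-G(\mu_n(1+x))\mp G(\mu_n(1-x)).
\]
Here $G$ is a completely monotone boundary-layer profile, given as a Laplace transform of a positive density, and the sign $\mp$ is fixed by the parity of $n$. The proof would proceed in three steps.

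\textbf{Step 1: symmetry.} Since every eigenvalue is simple and $\mathcal A_I$ commutes with $x\mapsto-x$, each $\varphi_n$ is either even or odd. Its parity is $(-1)^{n-1}$, matching the trigonometric main term. It therefore suffices to count zeros in $[0,1)$: a zero at $0$ occurs exactly when $n$ is even, and we need exactly $\lfloor (n-1)/2\rfloor$ zeros in $(0,1)$.

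\textbf{Step 2: zeros away from the boundary.} On the region $x\in[0,1-c/\mu_n]$ the boundary terms and $r_n$ are small. We use the explicit representation to bound $|\varphi_n-\cos(\mu_nx-\cdots)|$ and the corresponding derivative difference by a quantity strictly less than $1/\sqrt2$-type margins. Near each zero of the cosine the derivative is then of size $\mu_n$ and dominates the error, so there is exactly one zero there and it is simple. Between consecutive cosine zeros, $|\cos|$ exceeds the error bound, so no zeros occur. This matches the count $\lfloor (n-1)/2\rfloor$ of cosine zeros in $(0,1)$, since the phase $\mu_n\cdot 1-\tfrac{(n-1)\pi}{2}=\tfrac{\pi}{2}-\theta$ lies strictly before the next zero at $\pi/2$.

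\textbf{Step 3: the boundary layer.} On $x\in[1-c/\mu_n,1)$ we must show $\varphi_n$ has no zero. Here we use the representation exactly rather than asymptotically. Writing $t=\mu_n(1-x)$, the function becomes $\sin(t+\theta)-G(t)$ plus small terms. The key identity is that $\sin(t+\theta)-G(t)$ equals a positive multiple of $t^{\alpha/2}$ times a positive integral; this is the Laplace-transform representation of the half-line eigenfunction in Kwa\'snicki's work. It gives strict positivity on $(0,c]$.

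For small $n$, or all $\alpha$ uniformly, the asymptotic errors are not small enough. Here I would argue by contradiction using continuity in $\alpha$ together with the known cases. The number of zeros is locally constant in $\alpha$ provided all zeros stay simple and none enters from the boundary; boundary behaviour is controlled by $\varphi_n(x)\sim c\,(1-x)^{\alpha/2}$ with $c\ne0$. At $\alpha\to2$ the count is $n-1$ (Dirichlet sines). The Yang--Zhang--Ma bound of at most $2n-2$ sign changes, combined with Courant-type lower bounds from orthogonality, pins the count.

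\textbf{Main obstacle.} The hardest step is simplicity of zeros, or equivalently excluding a double zero appearing as $\alpha$ varies, in the regime where asymptotics are not quantitatively sharp. I would first try to obtain explicit, $\alpha$-uniform error constants in the representation (the same ones behind \Cref{thm2}'s constant $2$) sufficient for all $n\ge 4$. The cases $n\le3$ are already covered by the cited literature.
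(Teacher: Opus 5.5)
There is a genuine gap, and you have located it yourself: nothing in your plan proves simplicity or the exact count for a fixed $n$ and arbitrary $\alpha$.

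\begin{itemize}
\item \emph{The continuity-in-$\alpha$ fallback is circular.} Local constancy of the zero count needs that no interior double zero ever forms, and that is exactly the simplicity to be proved.
\item \emph{The sign-change bounds cannot pin the count.} An upper bound of $2n-2$ sign changes plus orthogonality does not force exactly $n-1$. Orthogonality to $\varphi_1,\dots,\varphi_{n-1}$ gives $n-1$ sign changes only if these functions form a Chebyshev system on $I$, which is not known a priori.
\item \emph{The large-$n$ part is unsupported.} Steps 2--3 need $\sup|r_n|$ and $\sup|r_n'|/\mu_n$ small on $[0,1-c/\mu_n]$. Near $x=1$ they also need a weighted bound $|r_n(x)|\le\varepsilon(\mu_n(1-x))^{\alpha/2}$ to beat $F_\alpha(t)\sim t^{\alpha/2}$. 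What is available for the glued quasimode is only $\|u_n-\varphi_n\|_{L^2}\le C_\alpha n^{-1-\alpha}$. Any such asymptotic argument would also cover only $n\ge N(\alpha)$, with $N(\alpha)$ not uniform in $\alpha$.
\item \emph{Step 1 is only asymptotic.} It fixes the parity $(-1)^{n-1}$ by matching the main term, which holds only for large $n$. For every $n$ you need the interlacing $\lambda_j^{\mathrm e}<\lambda_j^{\mathrm o}<\lambda_{j+1}^{\mathrm e}$. The paper obtains this from the Neumann/Dirichlet spectral-form comparison and min--max.
\end{itemize}

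The missing idea is that the representation is exact rather than asymptotic.

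\begin{itemize}
\item \emph{Positive Laplace transform.} The bilateral Laplace transform $\mathcal L_n(t)=\int_Ie^{tx}\varphi_n(x)\,dx$ is strictly positive for $t>0$. If it vanished, symmetry would make $(-\partial_x^2+t^2)^{-1}\varphi_n$ vanish outside $I$, so it would be an admissible test function. Testing the eigenvalue equation would then force $\int(|\xi|^\alpha-k_n^\alpha)(\xi^2-k_n^2)(\xi^2+t^2)^{-1}|\widehat\varphi_n|^2\,d\xi=0$, which is impossible.
\item \emph{Exact ODE.} Hence the exterior source $\nu_n=\lambda_n\varphi_n-\mathcal A\varphi_n$ is positive on $(1,\infty)$. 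The Stieltjes representation of $Q_n(z)=(z-a)/(z^{\alpha/2}-a^{\alpha/2})$ has a positive density. It turns $Q_n(\xi^2)\widehat\nu_n=(k_n^2-\xi^2)\widehat\varphi_n$ into $(\partial_x^2+k_n^2)(\varphi_n+\mathcal H_n)=0$ on $I$.
\item \emph{The representation.} Thus $\varphi_n=c_nT_n-\mathcal H_n$ exactly. Here $T_n=\cos(k_nx)$ or $\sin(k_nx)$ with $k_n=\lambda_n^{1/\alpha}$ (not $\mu_n$), and $\mathcal H_n(x)=\int_0^\infty\mathcal K_n(t)(e^{tx}+(-1)^{n+1}e^{-tx})\,dt$ with $\mathcal K_n>0$. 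So $\mathcal H_n,\mathcal H_n',\mathcal H_n''>0$ on $(0,1)$, and $\mathcal H_n<\mathcal H_n(1)=c_nT_n(1)\le|c_n|$ on $[0,1)$.
\item \emph{Concavity argument.} Consequently $\varphi_n<0$ wherever $c_nT_n\le0$. On each component of $\{c_nT_n>0\}$ one has $\varphi_n''=-k_n^2c_nT_n-\mathcal H_n''<0$. Strict concavity there gives exactly one zero in the component for each of its endpoints lying in $(0,1)$, and none toward $x=1$. The tangent-line inequality gives $\varphi_n'\neq0$ at every zero.
\item \emph{Counting.} The eigenvalue sandwich plus $T_n(1)\neq0$ gives $(n-1)\pi/2\le k_n<n\pi/2$. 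Together with the above, this yields the count $n-1$ and simplicity for all $n$ and all $\alpha$ at once. No asymptotics and no continuity in $\alpha$ are needed.
\end{itemize}
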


\subsection{Organization.}
\Cref{sec:halfline} records the half-line estimates used throughout the paper.  \Cref{sec:glued} constructs the quasimode $\wt\phi_n$ and proves the two-term asymptotic formula.  \Cref{sec:large-unified,sec:small} determine the third term in the asymptotic formula for $1\le\alpha<2$ and $0<\alpha<1$, respectively.  \Cref{sec:nodal} derives an exact eigenfunction representation, proves the uniform bound in \Cref{thm2}, and then proves the zero count and simplicity in \Cref{thm:nodal}.
\subsection{Notation}
Throughout, $C$ may change from
line to line, and subscripts indicate its allowed dependence.  An absolute
constant is independent of all varying parameters.  The notation $O_\alpha$
and $o_\alpha$ allows dependence on fixed $\alpha$. Whenever a function defined on a bounded interval is viewed as a function on $\R$, it
is understood to be extended by zero outside that interval, and we retain the
same notation for the extension.  For every bounded interval $J\subset\R$, the form domain is
\[
\wt H^{\alpha/2}(J)
=\{f\in H^{\alpha/2}(\R):f=0\ \text{a.e. on }\R\setminus J\}
\]
and we identify each such function with its restriction to $J$.  The associated quadratic form is
\begin{equation}\label{eq:form}
	\calE_{\alpha,J}(f,g)=\frac{c_\alpha}{2}\iint_{\R^2}
	\frac{(f(x)-f(y))(g(x)-g(y))}
	{\abs{x-y}^{1+\alpha}}\,dx\,dy
	=\frac1{2\pi}\int_\R\abs\xi^\alpha
	\widehat f(\xi)\overline{\widehat g(\xi)}\,d\xi.
\end{equation}
The change
of variables $y=x+h$ gives 
\begin{equation}\label{eq:translation-form}
	\calE_{\alpha,J}[f]:=\calE_{\alpha,J}(f,f)=\frac{c_\alpha}{2}\int_\R
	\frac{\norm[L^2(\R)]{f(\,\cdot+h)-f}^2}{\abs h^{1+\alpha}}\,dh
	=c_\alpha\int_0^\infty
	\frac{\norm[L^2(\R)]{f(\,\cdot+h)-f}^2}{h^{1+\alpha}}\,dh.
\end{equation}

\subsection{Proof outline.}
For \Cref{thm1}, we glue the two generalized half-line eigenfunctions to form the quasimode $\wt\phi_n$ in \eqref{eq:qmode-decomp} as in  \cite{KulczyckiKwasnickiMaleckiStos,KwasnickiInterval,KaletaKwasnickiMalecki}.  The Rayleigh quotient
$\calE_{\alpha,I}[\wt\phi_n]/\|\wt\phi_n\|_{L^2(I)}^2$ gives the
two-term formula in \Cref{thm:two-term}.  To obtain the third term in the asymptotic formula, at $\alpha=1$ we expand the residual
$z_n=(\mathcal A_I-\mu_n^\alpha)\wt\phi_n$ through order
$\mu_n^{-2}\log\mu_n$, and for $1<\alpha<2$ through order
$\mu_n^{-2}$.  For $0<\alpha<1$, the approximation error in
\eqref{eq:ordinary-transfer} has the same order as the desired third term, so
we need to use $w_n$ in \eqref{eq:w} instead of $\wt\phi_n$ to obtain the sharper approximation error in
\eqref{eq:filtered}.

For \Cref{thm2,thm:nodal}, we show that the eigenfunction $\varphi_n$ has a strictly positive bilateral Laplace transform on $(0,\infty)$, and  establish an explicit representation for the eigenfunction  in \Cref{prop:nodal-representation}. Then we obtain the desired results on  eigenfunction bounds and nodal sets by a comparison with  the standard Laplacian eigenfunctions on $I$.

\subsection{Further discussion}
It would be natural to ask how far the present one-dimensional results extend
to the fractional Laplacian in a bounded domain
$\Omega\subset\mathbb R^d$, $d\ge2$.  The  one-term Weyl law is known in broad
generality \cite{BlumenthalGetoor,GeisingerWeyl,GrubbSpectral,GrubbWeyl}.  The two-term
small-time heat-trace expansion, whose second term is proportional to
$|\partial\Omega|$, was proved for smooth and Lipschitz domains in
\cite{BanuelosKulczycki,BanuelosKulczyckiSiudeja}.  A
two-term semiclassical asymptotic formula for eigenvalue sums was proved in \cite{FrankGeisinger}. Ivrii
\cite{IvriiFractional} obtained the two-term Weyl law for the unsmoothed
eigenvalue counting function under the standard non-periodicity condition. 
Important open problems are to remove this condition,  and derive precise asymptotics for
individual eigenvalues in higher-dimensional domains. 
For eigenfunctions, Huang--Sire--Zhang \cite{HuangSireZhang} proved the
Sogge-type interior $L^p$ bounds.  Sharp global $L^p$ and $L^\infty$ estimates
up to the boundary, together with their optimal dependence on $\alpha$, remain
to be determined. Furthermore, it would be interesting to investigate these spectral problems for more general operators $\psi(-\Delta)$ and Schr\"odinger operators with potentials.

\subsection{Acknowledgments}
The author would like to thank Christopher  Sogge, Yannick Sire and Xiaoqi Huang for early discussions. The author is supported in part by the National Key R\&D Program of China
2024YFA1015300 and NSFC Grant 12371097.
\subsection{Statements and Declarations}
Data sharing not applicable to this article as no datasets were generated or analyzed during the current study. The author has no relevant financial or non-financial interests to disclose.
\section{Generalized half-line eigenfunctions}
\label{sec:halfline}

This section records the half-line formulas used later.  We derive the tail
expansion of the correction $G_\alpha$, compute its first moment when it is finite,
and prove the norm identity used to normalize $\wt\phi_n$.

Set
\[
 K_\alpha=c_\alpha\sqrt{\frac{\alpha}{2}},
 \qquad
 M_\alpha=\cos\theta-\sqrt{\frac{\alpha}{2}}.
\]
By \cite[Theorem~1.1 and Example~6.1]{KwasnickiHalfLine}, the generalized
half-line eigenfunction at frequency one is
\begin{equation}\label{eq:F}
 F_\alpha(t)=\sin(t+\theta)-G_\alpha(t),\qquad t>0,
\end{equation}
where $G_\alpha$ is the nonnegative, completely monotone correction term.  We extend
$F_\alpha$ by zero to $(-\infty,0]$.  Then, for every $\tau>0$,
\begin{equation}\label{half}
 \mathcal A F_\alpha(\tau\,\cdot)=\tau^\alpha F_\alpha(\tau\,\cdot)
 \quad\text{on }(0,\infty).
\end{equation}
The endpoint asymptotic in
\cite[Example~6.1 and Lemma~4.27]{KwasnickiHalfLine} is
\begin{equation}\label{eq:F-zero}
 F_\alpha(t)\sim
 \frac{t^{\alpha/2}}
 {\sqrt{\alpha/2}\,\Gamma(\alpha/2)},
 \qquad t\downarrow0.
\end{equation}
Consequently $G_\alpha(0+)=\sin\theta$.  Moreover,
\cite[Lemma~4.21]{KwasnickiHalfLine} gives
\begin{equation}\label{eq:Gmass}
 \int_0^\infty G_\alpha(t)\,dt=M_\alpha.
\end{equation}

Example~6.1 of \cite{KwasnickiHalfLine} also gives
\begin{equation}\label{eq:gamma}
 G_\alpha(t)=\int_0^\infty e^{-ts}\gamma_\alpha(s)\,ds,
 \qquad
 \gamma_\alpha(s)=\frac{K_\alpha}{\Gamma(1+\alpha)}
 \frac{s^\alpha e^{I_\alpha(s)}}
 {1+s^{2\alpha}-2s^\alpha\cos(\pi\alpha/2)},
\end{equation}
where
\begin{equation}\label{eq:I}
 I_\alpha(s)=\frac{s}{\pi}\int_0^\infty
 \frac{\log\!\bigl((1-u^\alpha)/(1-u^2)\bigr)}{s^2+u^2}\,du,
\end{equation}
with the continuous value understood at $u=1$.

\begin{lemma}\label{lem:halfline-data}
As $t\to\infty$, the following expansions hold and remain valid after any
fixed number of differentiations.

If $0<\alpha<1$, then
\begin{equation}\label{eq:Gsmall}
 G_\alpha(t)=K_\alpha t^{-1-\alpha}
 +K_{2,\alpha}t^{-1-2\alpha}+o_\alpha(t^{-1-2\alpha}),
\end{equation}
where
\begin{equation}\label{eq:K2small}
 K_{2,\alpha}=K_\alpha\frac{\Gamma(1+2\alpha)}{\Gamma(1+\alpha)}
 \left(2\cos\frac{\pi\alpha}{2}
 -\frac12\sec\frac{\pi\alpha}{2}\right).
\end{equation}
For $\alpha=1$, $K_1=1/(\pi\sqrt2)$ and
\begin{equation}\label{eq:Gcritical}
 G_1(t)=K_1t^{-2}-\frac{2K_1}{\pi}t^{-3}\log t+O(t^{-3}),
 \qquad
 \int_0^TtG_1(t)\,dt=K_1\log T+O(1).
\end{equation}
If $1<\alpha<2$, then
\begin{equation}\label{eq:Glarge}
 G_\alpha(t)=K_\alpha t^{-1-\alpha}
 +K_{2,\alpha}t^{-2-\alpha}+o_\alpha(t^{-2-\alpha}),
 \qquad
 K_{2,\alpha}=(1+\alpha)K_\alpha\cot\frac{\pi}{\alpha},
\end{equation}
and
\begin{equation}\label{eq:M1}
 M_{1,\alpha}:=\int_0^\infty tG_\alpha(t)\,dt
 =-\sin\theta-\sqrt{\alpha/2}\cot\frac{\pi}{\alpha}.
\end{equation}
Finally,
\begin{equation}\label{eq:halfline-norm-identity}
 \int_0^\infty
 \bigl(F_\alpha(t)^2-\sin^2(t+\theta)\bigr)\,dt
 =-\frac14\sin(2\theta).
\end{equation}
\end{lemma}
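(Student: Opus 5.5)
The plan is to work entirely from the Laplace representation \eqref{eq:gamma}: the large-$t$ behaviour of $G_\alpha(t)=\int_0^\infty e^{-ts}\gamma_\alpha(s)\,ds$ is governed by the small-$s$ behaviour of $\gamma_\alpha$, through a Watson-type lemma. Differentiating in $t$ only inserts powers $(-s)^k$ into the integrand. Hence every expansion survives a fixed number of differentiations once it is proved for a general integrand of the form $s^k\gamma_\alpha(s)$.

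\textbf{Small-$s$ expansion of $\gamma_\alpha$.} First I will expand $I_\alpha(s)$ as $s\downarrow0$. Write $\ell(u)=\log((1-u^\alpha)/(1-u^2))$. Then $\ell(u)\to0$ as $u\to0$, with $\ell(u)=-u^\alpha+O(u^{\min(2\alpha,2)})$, and $\ell(u)\sim(\alpha-2)\log u$ as $u\to\infty$. Using $\frac{s}{\pi}\int_0^\infty\frac{du}{s^2+u^2}=\frac12$ and splitting at $u=1$, I expect
\[
I_\alpha(s)=c_1 s+\text{(terms in }s^{\alpha}\text{ or }s\log s)+\dots
\]
The linear coefficient is $c_1=\frac1\pi\int_0^\infty\ell(u)u^{-2}\,du$, which converges when $\alpha>1$. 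The $s^\alpha$ term arises from $-u^\alpha$ near $0$ through $\frac s\pi\int\frac{u^\alpha}{s^2+u^2}du=\frac{s^\alpha}{2\cos(\pi\alpha/2)}$, which gives $-\tfrac12\sec(\pi\alpha/2)s^\alpha$. For $\alpha=1$ the two effects merge into an $s\log s$ term.

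The rational factor expands as $\frac{s^\alpha}{1+s^{2\alpha}-2s^\alpha\cos(\pi\alpha/2)}=s^\alpha+2\cos(\pi\alpha/2)s^{2\alpha}+\dots$. Combining the pieces:
\begin{itemize}
\item For $\alpha<1$: $\gamma_\alpha(s)=\frac{K_\alpha}{\Gamma(1+\alpha)}\bigl(s^\alpha+(2\cos\frac{\pi\alpha}2-\frac12\sec\frac{\pi\alpha}2)s^{2\alpha}+o(s^{2\alpha})\bigr)$.
\item For $\alpha>1$: the next term is $c_1 s^{1+\alpha}$.
\end{itemize}

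\textbf{Watson's lemma and the constants.} Applying $\int_0^\infty e^{-ts}s^\beta\,ds=\Gamma(1+\beta)t^{-1-\beta}$ gives \eqref{eq:Gsmall} with \eqref{eq:K2small}. For \eqref{eq:Glarge} it gives $K_{2,\alpha}=(1+\alpha)K_\alpha c_1$. The claim therefore reduces to the identity $c_1=\cot(\pi/\alpha)$.

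To prove this identity I would substitute $u^{-1}$ and compute
\[
\frac1\pi\int_0^\infty\log\frac{1-v^{-\alpha}}{1-v^{-2}}\,dv
\]
via the formula $\int_0^\infty\log|1-v^{-a}|\,dv=\pi\cot(\pi/a)$ for $a>1$. This formula I would prove by integrating by parts to $-a\int_0^\infty\frac{v^{-a}}{1-v^{-a}}dv$ (principal value) and then using the principal-value beta integral $\pv\int_0^\infty\frac{x^{p-1}}{1-x}dx=\pi\cot\pi p$. Since $\cot(\pi/2)=0$, only the $a=\alpha$ term survives.

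For $\alpha=1$, the $s\log s$ coefficient comes from the borderline logarithmic integral and should equal $-\frac{1}{\pi}$ in $\gamma_1(s)=\frac{K_1}{1}(s+ \frac{1}{\pi}\cdot 2 s^2\log s+O(s^2))$. The constant must be fixed so that Watson's lemma for $s^2\log s$, namely $\int e^{-ts}s^2\log s\,ds=2t^{-3}(\tfrac32-\gamma-\log t)$, produces $-\frac{2K_1}{\pi}t^{-3}\log t$. The value $K_1=c_1/\sqrt2=1/(\pi\sqrt2)$ follows from $c_1=1/\pi$. The integral $\int_0^T tG_1$ is then $K_1\log T+O(1)$ directly from the leading term.

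\textbf{Moment and norm identity.} For $M_{1,\alpha}$ with $\alpha>1$, note $\int_0^\infty tG_\alpha\,dt=\int_0^\infty\gamma_\alpha(s)s^{-2}\,ds$. I would instead evaluate it from the Laplace-transform structure of $F_\alpha$. The relation \eqref{eq:Gmass} comes from matching $\widehat F_\alpha$ at $\xi=0$; by the same route, \eqref{eq:M1} should follow from the next Taylor coefficient of the Laplace transform of $F_\alpha$ at $0$, which Kwaśnicki's formula expresses as $\exp$ of an integral of $\ell$. Its derivative again produces $c_1=\cot(\pi/\alpha)$, combined with $\sin\theta$ from the $\sin(t+\theta)$ part.

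For \eqref{eq:halfline-norm-identity}, expand $F_\alpha^2-\sin^2(t+\theta)=G_\alpha^2-2G_\alpha\sin(t+\theta)$. Both integrals are finite by \eqref{eq:Gsmall}. I would use the eigen-equation \eqref{half} differentiated in $\tau$ at $\tau=1$ (a Rellich/Pohozaev-type identity): pairing $\mathcal A(tF_\alpha')$ against $F_\alpha$ yields $\alpha\int F_\alpha^2$ regularized against $\sin^2$, with the boundary contribution determined by $G_\alpha(0+)=\sin\theta$. This should give $-\frac14\sin2\theta$.

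\textbf{Main obstacle.} I expect the main obstacles to be the precise constants: the $\alpha=1$ logarithmic coefficient, and the rigorous justification of the regularized Pohozaev identity for the non-$L^2$ function $F_\alpha$. For the latter I would first try a truncation $F_\alpha\one_{(0,R)}$ and average over $R$ to kill the oscillatory boundary terms.
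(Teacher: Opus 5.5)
Your expansions for $\alpha\ne1$ and your treatment of $M_{1,\alpha}$ follow the paper's route and are sound: the small-$s$ expansion of $I_\alpha$ fed into $\gamma_\alpha$, Watson's lemma with $s^k$ inserted to handle derivatives, the principal-value beta integral giving $\cot(\pi/\alpha)$, and differentiation of \eqref{eq:F-Laplace} at $s=0$.

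\textbf{The case $\alpha=1$.} You never compute the logarithmic coefficient. Instead you propose to choose it so that Watson's lemma reproduces $-\frac{2K_1}{\pi}t^{-3}\log t$, which is circular. The provisional values you write are also inconsistent and wrong. The coefficient of $s\log s$ in $I_1$ is $+1/\pi$, not $-1/\pi$. Moreover $\gamma_1(s)=K_1\bigl(s+\pi^{-1}s^2\log s+O(s^2)\bigr)$; your extra factor $2$ would give $-\frac{4K_1}{\pi}t^{-3}\log t$. The fix is a short computation:
\begin{itemize}
\item Since $\log\frac{1-u}{1-u^2}=-\log(1+u)$, the substitution $u=sv$ gives $I_1(s)=-\frac1\pi\int_0^\infty\frac{\log(1+sv)}{1+v^2}\,dv$.
\item Differentiating under the integral and using partial fractions yields $I_1'(s)=\frac{\log s-(\pi/2)s}{\pi(1+s^2)}$.
\item Integrating from $0$ gives $I_1(s)=\pi^{-1}s\log s-\pi^{-1}s+O(s^2)$.
\end{itemize}

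\textbf{The norm identity.} This is the genuine gap: your argument for \eqref{eq:halfline-norm-identity} is not a proof. The regularized Pohozaev/dilation identity is only gestured at, and nothing in the sketch explains where $-\frac14\sin(2\theta)$ comes from. The role you assign to $G_\alpha(0+)=\sin\theta$ is unfounded. For a dilation centred at the endpoint $0$, the Pohozaev boundary weight $x\nu(x)$ vanishes there. Formally your identity degenerates to $\alpha\int_0^\infty F_\alpha(t)^2\,dt=0$. So the whole answer would have to be extracted from nonlocal truncation errors at infinity, and you have not analysed these.

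The missing idea is spectral. By \cite[Theorem~1.3]{KwasnickiHalfLine}, the transform $\mathcal Uf(\lambda)=\sqrt{2/\pi}\int_0^\infty f(x)F_\alpha(\lambda x)\,dx$ is unitary on $L^2(0,\infty)$. Hence $\int_0^\infty F_\alpha(\lambda x)F_\alpha(\nu x)\,dx=\frac\pi2\delta(\lambda-\nu)$ in $\mathcal D'((0,\infty)^2)$. A direct computation, using $\lambda+\nu>0$, gives
\[
\int_0^\infty\sin(\lambda x+\theta)\sin(\nu x+\theta)\,dx=\frac\pi2\delta(\lambda-\nu)+\frac{\sin 2\theta}{2(\lambda+\nu)}.
\]
The difference of the two left-hand sides is an absolutely convergent integral. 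It is continuous in $(\lambda,\nu)$ because $G_\alpha\in L^1\cap L^\infty$ and $G_\alpha$ is decreasing. So the difference equals $-\frac{\sin 2\theta}{2(\lambda+\nu)}$ pointwise, and setting $\lambda=\nu=1$ gives the identity.

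Put differently, unitarity says $F_\alpha$ has no $O(1)$ correction in its $R$-averaged mass $\int_0^RF_\alpha^2$. By contrast, $\sin(\cdot+\theta)$ carries the correction $\frac14\sin 2\theta$. Any Pohozaev-type argument would have to recover exactly this normalization, and your sketch does not show how.
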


\begin{proof}
We first determine $I_\alpha(s)$ as $s\downarrow0$.  For $0<\alpha<1$,
the logarithm in \eqref{eq:I} equals $-u^\alpha+o(u^\alpha)$ at zero and is
bounded there by $C_\alpha u^\alpha$.  Split the integral at a fixed
$\delta\in(0,1)$.  The part over $[\delta,\infty)$ is $O_\alpha(s)$, and the
substitution $u=sv$ in the remaining part gives
\[
 \frac{I_\alpha(s)}{s^\alpha}\longrightarrow
 -\frac1\pi\int_0^\infty\frac{v^\alpha}{1+v^2}\,dv
 =-\frac1{2\cos(\pi\alpha/2)}.
\]
For $\alpha=1$, the same substitution gives
\[
 I_1(s)=-\frac1\pi\int_0^\infty
 \frac{\log(1+sv)}{1+v^2}\,dv.
\]
Differentiating under the integral sign yields
\[
 I_1'(s)=\frac{\log s-(\pi/2)s}{\pi(1+s^2)}.
\]
Since $I_1(s)\to0$, integration gives
$I_1(s)=\pi^{-1}s\log s-\pi^{-1}s+O(s^2)$.

For $1<\alpha<2$, the logarithm in \eqref{eq:I}, divided by $u^2$, is
integrable.  Dominated convergence followed by integration by parts gives
\begin{align*}
 \lim_{s\downarrow0}\frac{\pi I_\alpha(s)}s
 &=\int_0^\infty
 \frac{\log\!\bigl((1-u^\alpha)/(1-u^2)\bigr)}{u^2}\,du\\
 &=\pv\int_0^\infty
 \left(\frac{2}{1-u^2}
 -\frac{\alpha u^{\alpha-2}}{1-u^\alpha}\right)du\\
 &=\pv\int_0^\infty\frac{t^{-1/2}-t^{-1/\alpha}}{1-t}\,dt
 =\pi\cot\frac{\pi}{\alpha}.
\end{align*}
The principal-value identity in the last line follows from
$\pv\int_0^\infty t^{p-1}(1-t)^{-1}\,dt=\pi\cot(\pi p)$, which is a
consequence of the Euler reflection formula.  We have proved
\[
 I_\alpha(s)=
 \begin{cases}
 -\dfrac{s^\alpha}{2\cos(\pi\alpha/2)}+o(s^\alpha),&0<\alpha<1,\\[2mm]
 \dfrac{s}{\pi}\log s-\dfrac{s}{\pi}+O(s^2),&\alpha=1,\\[2mm]
 s\cot(\pi/\alpha)+o(s),&1<\alpha<2.
 \end{cases}
\]

Substitution in \eqref{eq:gamma} now gives
\[
 \gamma_\alpha(s)=
 \begin{cases}
 \dfrac{K_\alpha}{\Gamma(1+\alpha)}s^\alpha
 +\dfrac{K_{2,\alpha}}{\Gamma(1+2\alpha)}s^{2\alpha}
 +o(s^{2\alpha}),&0<\alpha<1,\\[2mm]
 K_1\bigl(s+\pi^{-1}s^2\log s+O(s^2)\bigr),&\alpha=1,\\[2mm]
 \dfrac{K_\alpha}{\Gamma(1+\alpha)}s^\alpha
 +\dfrac{K_{2,\alpha}}{\Gamma(2+\alpha)}s^{1+\alpha}
 +o(s^{1+\alpha}),&1<\alpha<2.
 \end{cases}
\]
Using
\[
 \int_0^\infty e^{-ts}s^q\,ds=\Gamma(q+1)t^{-q-1},
 \qquad
 \int_0^\infty e^{-ts}s^2\log s\,ds
 =-2t^{-3}\log t+O(t^{-3}),
\]
we obtain \eqref{eq:Gsmall}--\eqref{eq:Glarge}.  To differentiate these
expansions, choose a smooth cutoff equal to one on $[0,1]$ and zero on
$[2,\infty)$.  Subtract the displayed terms from $\gamma_\alpha(s)$,
multiply the remainder by this cutoff, and denote the result by $r(s)$.  If $r(s)=o(s^q)$, then, for every fixed $m\ge0$,
\[
 \int_0^\infty e^{-ts}s^m r(s)\,ds=o(t^{-q-m-1})
\]
by the substitution $u=ts$ and dominated convergence.  On the complementary
region $s\ge1$ and for $t\ge1$,
\[
 s^m e^{-ts}\le C_m e^{-t/2}e^{-s/4}.
\]
Since $\int_0^\infty e^{-s/4}\gamma_\alpha(s)\,ds=G_\alpha(1/4)<\infty$,
this part is $O_m(e^{-t/2})$.  Replacing $o(s^q)$ by $O(s^q)$ gives the
corresponding $O$-estimate.  Finally,
$tG_1(t)=K_1t^{-1}+O(t^{-2}\log t)$, and integration from $1$ to $T$ proves
the second formula in \eqref{eq:Gcritical}.

The Laplace-transform formula
\cite[Theorem~1.1, equation~(1.1)]{KwasnickiHalfLine} is
\begin{equation}\label{eq:F-Laplace}
 \int_0^\infty e^{-st}F_\alpha(t)\,dt
 =\frac{\sqrt{\alpha/2}e^{-I_\alpha(s)}}{1+s^2}.
\end{equation}
For $1<\alpha<2$, differentiation at $s=0$ gives
\[
 -\sqrt{\alpha/2}\cot\frac{\pi}{\alpha}
 =\sin\theta+\int_0^\infty tG_\alpha(t)\,dt,
\]
where we used \eqref{eq:F} and the integrability of $tG_\alpha(t)$.  This is
\eqref{eq:M1}.

It remains to prove \eqref{eq:halfline-norm-identity}.  Since
\[
F_\alpha(t)=\sin(t+\theta)-G_\alpha(t),
\qquad
G_\alpha\in L^1(0,\infty)\cap L^\infty(0,\infty),
\]
we have
\[
\begin{aligned}
	&F_\alpha(\lambda x)F_\alpha(\nu x)
	-\sin(\lambda x+\theta)\sin(\nu x+\theta)\\
	&\quad=
	-G_\alpha(\lambda x)\sin(\nu x+\theta)
	-G_\alpha(\nu x)\sin(\lambda x+\theta)
	+G_\alpha(\lambda x)G_\alpha(\nu x).
\end{aligned}
\]
Hence
\[
\int_0^\infty
\bigl(
F_\alpha(\lambda x)F_\alpha(\nu x)
-
\sin(\lambda x+\theta)\sin(\nu x+\theta)
\bigr)\,dx
\]
converges absolutely and is continuous for \(\lambda,\nu>0\).

By \cite[Theorem~1.3]{KwasnickiHalfLine}, the transform
\[
(\mathcal{U}f)(\lambda)=
\sqrt{\frac2\pi}
\int_0^\infty f(x)F_\alpha(\lambda x)\,dx
\]
is unitary on \(L^2(0,\infty)\).  Since $\mathcal{U}\mathcal{U}^*$ is the identity map, we have
\[
\int_0^\infty
F_\alpha(\lambda x)F_\alpha(\nu x)\,dx
=
\frac{\pi}{2}\delta(\lambda-\nu)
\qquad\text{in }\mathcal D'((0,\infty)^2).
\]
Using
\[
\int_0^\infty e^{isx}\,dx
=
\pi\delta(s)+i\,\pv\frac1s
\qquad\text{in }\mathcal S'(\mathbb R),
\]
together with \(\lambda+\nu>0\), we obtain
\[
\int_0^\infty
\sin(\lambda x+\theta)\sin(\nu x+\theta)\,dx
=
\frac{\pi}{2}\delta(\lambda-\nu)
+
\frac{\sin(2\theta)}{2(\lambda+\nu)}
\]
in \(\mathcal D'((0,\infty)^2)\).  Subtracting the last two
distributional identities gives
\[
\begin{aligned}
	\int_0^\infty
	\bigl(
	F_\alpha(\lambda x)F_\alpha(\nu x)
	-
	\sin(\lambda x+\theta)\sin(\nu x+\theta)
	\bigr)\,dx=
	-\frac{\sin(2\theta)}{2(\lambda+\nu)}.
\end{aligned}
\]
Both sides are continuous on \((0,\infty)^2\), so this equality holds
pointwise.  Taking \(\lambda=\nu=1\), we prove \eqref{eq:halfline-norm-identity}.

\end{proof}

\section{Approximation of the interval eigenfunctions}
\label{sec:glued}

This section constructs the quasimode $\wt\phi_n$ to approximate the corresponding eigenfunction.  We use its Rayleigh quotient to prove
the two-term asymptotic formula of the eigenvalue.

Choose $q\in C^\infty(\R)$ such that
\[
 0\le q\le1,\qquad q=0\ \text{on }(-\infty,-1/3],\qquad
 q=1\ \text{on }[1/3,\infty),\qquad q(x)+q(-x)=1.
\]
An explicit $q$ is used in \cite{KwasnickiInterval} and \cite{KaletaKwasnickiMalecki}, but their proofs
 require only the displayed
support, symmetry, and smoothness properties of $q$, so they apply to the
present choice.  Define the quasimode
\begin{equation}\label{eq:qmode}
 \wt\phi_n(x)=q(-x)F_\alpha(\mu_n(1+x))
 +(-1)^{n+1}q(x)F_\alpha(\mu_n(1-x)).
\end{equation}

Since $2\mu_n+2\theta=n\pi$,
\begin{equation}\label{eq:match}
 (-1)^{n+1}\sin(\mu_n(1-x)+\theta)
 =\sin(\mu_n(1+x)+\theta).
\end{equation}
Hence, on $I$,
\begin{equation}\label{eq:qmode-decomp}
 \wt\phi_n(x)=S_n(x)-q(-x)G_\alpha(\mu_n(1+x))
 -(-1)^{n+1}q(x)G_\alpha(\mu_n(1-x)),
 \qquad S_n(x)=\sin(\mu_n(1+x)+\theta).
\end{equation}
Define on $I$
\begin{equation}\label{eq:R-alpha-n}
 R_{\alpha,n}(x)=-K_\alpha\left[
 q(x)(1+x)^{-1-\alpha}
 +(-1)^{n+1}q(-x)(1-x)^{-1-\alpha}\right].
\end{equation}
Then
\begin{equation}\label{eq:R-reflection}
 R_{\alpha,n}(-x)=(-1)^{n+1}R_{\alpha,n}(x).
\end{equation}
Fix also $\chi\in C_c^\infty((1/3,3/2))$ such that $\chi=1$ on $[1/2,1]$.

\begin{lemma}\label{lem:operator-bound}
If $v\in W^{3,1}(\R)$, then $\mathcal Av\in W^{1,1}(\R)$ and
\[
 \norm[W^{1,1}(\R)]{\mathcal Av}
 \le C_\alpha\norm[W^{3,1}(\R)]{v}.
\]
If $d>0$, $v\in L^1(\R)$, and $\operatorname{supp}v\subset[d,\infty)$, then,
for $x\in(-1,0)$,
\[
 \mathcal Av(x)=-c_\alpha\int_d^\infty\frac{v(y)}{(y-x)^{1+\alpha}}\,dy,
 \qquad
 \norm[W^{1,1}(-1,0)]{\mathcal Av}
 \le C_{\alpha,d}\norm[L^1]{v}.
\]
\end{lemma}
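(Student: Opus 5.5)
For the first assertion I would split $\mathcal A$ into a local and a nonlocal part using the singular-integral representation. Write
\[
 \mathcal Av(x)=c_\alpha\int_{\abs h\le1}\frac{v(x)-v(x+h)}{\abs h^{1+\alpha}}\,dh
 +c_\alpha\int_{\abs h>1}\frac{v(x)-v(x+h)}{\abs h^{1+\alpha}}\,dh .
\]
For the near part, symmetrize in $h$. This gives
\[
 -\frac{c_\alpha}{2}\int_{\abs h\le1}\bigl(v(x+h)+v(x-h)-2v(x)\bigr)\abs h^{-1-\alpha}\,dh,
\]
and by Taylor's formula with integral remainder
\[
 v(x+h)+v(x-h)-2v(x)=h^2\int_0^1(1-s)\bigl(v''(x+sh)+v''(x-sh)\bigr)\,ds .
\]
Minkowski's inequality then bounds the $L^1$ norm of the near part by $C\norm[L^1]{v''}\int_{\abs h\le1}\abs h^{1-\alpha}\,dh$. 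This is finite because $\alpha<2$.

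For the far part, translation invariance of the $L^1$ norm gives the bound $2\norm[L^1]{v}\int_{\abs h>1}\abs h^{-1-\alpha}\,dh$, which is finite because $\alpha>0$. Since $\mathcal A$ commutes with differentiation, $(\mathcal Av)'=\mathcal A(v')$ in the distributional sense. Applying the same estimate to $v'\in W^{2,1}$ gives $\norm[L^1]{(\mathcal Av)'}\le C_\alpha\norm[W^{3,1}]{v}$. To justify the commutation, I would check the identity first for $v\in C_c^\infty$, where it is immediate from the Fourier symbol, and then pass to the limit by density of $C_c^\infty$ in $W^{3,1}$ using the bound just proved.

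For the second assertion, take $x\in(-1,0)$. Then $v(x)=0$, and the singularity of the kernel is avoided, since $y-x\ge d$ on $\operatorname{supp}v$. The principal value is therefore an ordinary integral, and
\[
 \mathcal Av(x)=-c_\alpha\int_d^\infty v(y)(y-x)^{-1-\alpha}\,dy .
\]
For the $W^{1,1}(-1,0)$ bound, differentiate under the integral sign. This is justified by dominated convergence, because $\abs{\partial_x^k(y-x)^{-1-\alpha}}\le C_\alpha d^{-1-\alpha-k}$ for $k=0,1$, uniformly in $x<0$ and $y\ge d$. This yields the pointwise bound $\abs{\partial_x^k\mathcal Av(x)}\le C_{\alpha,d}\norm[L^1]{v}$ for $k=0,1$. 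Integrating over the interval $(-1,0)$, which has length one, gives the claim.

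The only step needing some care is justifying $(\mathcal Av)'=\mathcal A v'$ for $v$ merely in $W^{3,1}$, rather than smooth and compactly supported. The density argument above handles it, because the first estimate is a bounded map $W^{2,1}\to L^1$. Everything else is routine.
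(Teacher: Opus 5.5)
Your proposal is correct and is essentially the paper's argument. The paper likewise writes the second difference via a Taylor-type identity and bounds its $L^1_x$ norm by $\min\{t^2\norm[L^1]{w''},4\norm[L^1]{w}\}$. It then splits at $t=1$, applies the estimate to $v$ and $v'$ with a density argument for $\partial_x\mathcal Av=\mathcal A(v')$, and obtains the second assertion directly from $v(x)=0$ on $(-1,0)$ by differentiating the separated kernel.
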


\begin{proof}
For $w\in W^{2,1}(\R)$ and $t>0$,
\[
 2w(x)-w(x+t)-w(x-t)
 =-\int_{-t}^t(t-|s|)w''(x+s)\,ds.
\]
This identity and translation invariance give
\[
 \norm[L_x^1]{2w(x)-w(x+t)-w(x-t)}
 \le \min\{t^2\norm[L^1]{w''},4\norm[L^1]{w}\}.
\]
Using
\[
 \mathcal Aw(x)=c_\alpha\int_0^\infty
 \frac{2w(x)-w(x+t)-w(x-t)}{t^{1+\alpha}}\,dt
\]
and splitting at $t=1$ gives
\[
 \norm[L^1]{\mathcal Aw}
 \le c_\alpha\left(
 \frac{\norm[L^1]{w''}}{2-\alpha}
 +\frac{4\norm[L^1]{w}}{\alpha}\right).
\]
Apply this estimate to $v$ and $v'$.  Approximation by smooth functions in
$W^{3,1}(\R)$ gives $\partial_x\mathcal Av=\mathcal A(v')$ and proves the
first assertion.  If $\operatorname{supp}v\subset[d,\infty)$ and
$x\in(-1,0)$, then $v(x)=0$, which gives the displayed integral formula.
Integrating that formula and its derivative over $(-1,0)$ proves the second
assertion.
\end{proof}

\begin{proposition}
\label{prop:qmode}
For every fixed $0<\alpha<2$, one has
$\wt\phi_n\in\Dom(\mathcal A_I)$.  Set
$z_n=(\mathcal A_I-\mu_n^\alpha)\wt\phi_n$.  Then
\begin{equation}\label{eq:qnorm-sharp}
 \norm[L^\infty(I)]{\wt\phi_n}\le C_\alpha,
 \qquad
 \norm[L^2(I)]{\wt\phi_n}^2=1+o_\alpha(\mu_n^{-1}).
\end{equation}
Moreover,
\begin{equation}\label{eq:zBV}
 \norm[L^2(I)]{z_n}+\norm[L^\infty(I)]{z_n}+\Var_I(z_n)
 \le C_\alpha\mu_n^{-1}.
\end{equation}
As $n\to\infty$,
\begin{equation}\label{eq:zlimit}
 \norm[W^{1,1}(-1,0)]{\mu_nz_n-R_{\alpha,n}}
 +\norm[L^2(I)]{\mu_nz_n-R_{\alpha,n}}
 =o_\alpha(1).
\end{equation}
If $\varphi_k$ is the normalized eigenfunction for
$\lambda_k(\alpha)$, then
\begin{equation}\label{eq:zk}
 \abs{\ip{z_n}{\varphi_k}}\le\frac{C_\alpha}{\mu_n k},
 \qquad k,n\ge1.
\end{equation}
After choosing the sign of $\varphi_n$ and writing
$u_n=\wt\phi_n/\norm[L^2(I)]{\wt\phi_n}$,
\begin{equation}\label{eq:qmode-L2close}
 \norm[L^2(I)]{u_n-\varphi_n}\le C_\alpha n^{-1-\alpha}.
\end{equation}
Finally,
\begin{equation}\label{eq:ordinary-transfer}
 \left|\frac{\calE_{\alpha,I}[\wt\phi_n]}
 {\norm[L^2(I)]{\wt\phi_n}^2}-\lambda_n\right|
 \le C_\alpha\mu_n^{-2-\alpha}.
\end{equation}
\end{proposition}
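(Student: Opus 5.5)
Membership of $\wt\phi_n$ in $\Dom(\mathcal A_I)$ and the formula for $z_n$ come first, via the half-line identity \eqref{half}. Write $\wt\phi_n=\phi^-+\phi^+$ with $\phi^-(x)=q(-x)F_\alpha(\mu_n(1+x))$. On $(-1,0)$ we have $\phi^-=F_\alpha(\mu_n(1+\cdot))-(1-q(-\cdot))F_\alpha(\mu_n(1+\cdot))$. The first piece satisfies $\mathcal A F=\mu_n^\alpha F$ there by \eqref{half}. The second piece is supported in $[-1/3,\infty)$ and is smooth with bounded derivatives on $[-1/3,1]$; its contribution on $(-1,0)$ is handled by \Cref{lem:operator-bound}, using the second assertion after localising the support to a neighbourhood away from $-1$, and the first assertion for the smooth compactly supported remainder. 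The reflected piece $\phi^+$ is treated symmetrically. Hence $\mathcal A\wt\phi_n\in L^2(I)$, and $z_n$ is given explicitly as a sum of commutator-type terms of the form
\[
\mathcal A\bigl[(1-q(\mp\cdot))F_\alpha(\mu_n(1\pm\cdot))\bigr]-\mu_n^\alpha(1-q(\mp\cdot))F_\alpha(\mu_n(1\pm\cdot)).
\]
Using \eqref{eq:qmode-decomp}, the sine parts combine to $S_n$ on the whole interval. The only non-cancelling contributions are then the $G_\alpha$ tails and the commutators $[\mathcal A,q]$ applied to $G_\alpha(\mu_n(1\pm x))$. On the cutoff region we have $G_\alpha(\mu_n(1\pm x))=K_\alpha\mu_n^{-1-\alpha}(1\pm x)^{-1-\alpha}+O(\mu_n^{-2-\alpha})$ by \Cref{lem:halfline-data}, with derivatives.

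The estimates on $z_n$ come next. One caveat is needed: the sine parts cancel only after using $\mathcal A S=\mu_n^\alpha S$ on all of $\R$. The cutoff corrections therefore reduce to $\mathcal A$ acting on functions like $q(x)G_\alpha(\mu_n(1+x))$ together with distant pieces. Pieces supported at distance $\ge d$ from the evaluation region are controlled by the integral kernel $(y-x)^{-1-\alpha}$ and $\norm[L^1]{v}$. A $G_\alpha$ piece has $L^1$ norm $\asymp\mu_n^{-1}$ by \eqref{eq:Gmass} after rescaling, and this gives \eqref{eq:zBV}. For the limit \eqref{eq:zlimit}, note that $\mu_n\cdot\mu_n^{-1}\int G_\alpha\,dt\to M_\alpha$. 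More precisely, the kernel formula with the mass of $G_\alpha(\mu_n(1+\cdot))$ concentrating at $x=-1$ produces $-c_\alpha M_\alpha(1+x)^{-1-\alpha}$ and its reflection. I expect these constants to combine with the $\mathcal A$-image of the $\sin$ truncation to give exactly $-K_\alpha(1+x)^{-1-\alpha}$, as dictated by $R_{\alpha,n}$. This constant matching is the main obstacle. I would verify it by applying \eqref{half} to the untruncated $F_\alpha$ and computing $\mathcal A$ of the truncated half-line sine near the far endpoint, using $G_\alpha(0+)=\sin\theta$. The $L^2$ norm identity in \eqref{eq:qnorm-sharp} follows from \eqref{eq:halfline-norm-identity}. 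The cross terms between $S_n$ and the far $G_\alpha$ piece are $o(\mu_n^{-1})$ by Riemann–Lebesgue. The bound $\int_I S_n^2=1+\frac{\sin 2\theta}{2\mu_n}+O(\mu_n^{-2})$, say, is exactly cancelled by $2\cdot(-\frac14\sin 2\theta)/\mu_n$ coming from the two endpoints.

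For \eqref{eq:zk}, write $\ip{z_n}{\varphi_k}$ and integrate by parts once. Since $\varphi_k$ is oscillatory at frequency $\sim k$, I would use $\varphi_k=\lambda_k^{-1}\mathcal A_I\varphi_k$ or, more robustly, a BV–oscillation bound: $\abs{\ip{z}{\varphi_k}}\lesssim(\norm[\infty]{z}+\Var(z))/k$. This needs a primitive of $\varphi_k$ bounded by $C/k$, which I would obtain from the known two-term quasimode approximation of $\varphi_k$ (\cite{KwasnickiInterval}) by sines of frequency $\mu_k$ in $L^2$.

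Then \eqref{eq:qmode-L2close} follows by spectral decomposition. Expand $u_n=\sum_k a_k\varphi_k$, so that $(\lambda_k-\mu_n^\alpha)a_k=\ip{z_n}{\varphi_k}/\norm{\wt\phi_n}$. The gaps satisfy $\abs{\lambda_k-\mu_n^\alpha}\gtrsim\abs{k-n}\max(k,n)^{\alpha-1}$ by \eqref{eq:Kwasnicki-two-term}. Hence $\sum_{k\ne n}a_k^2\lesssim\sum_{k\ne n}\mu_n^{-2}k^{-2}\abs{k-n}^{-2}\max(k,n)^{2-2\alpha}\lesssim n^{-2-2\alpha}$, and normalisation fixes $a_n$.

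Finally, for \eqref{eq:ordinary-transfer}, the Rayleigh quotient minus $\lambda_n$ equals $\sum_k(\lambda_k-\lambda_n)a_k^2$. Bounding this sum with the same estimates gives $\sum_{k\ne n}(\lambda_k-\lambda_n)a_k^2\lesssim\sum_{k\ne n}\mu_n^{-2}k^{-2}/\abs{\lambda_k-\lambda_n}\lesssim\mu_n^{-2}n^{-\alpha}$.
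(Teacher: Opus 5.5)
Your architecture matches the paper's: residual via the half-line identity, the primitive/BV bound for $\ip{z_n}{\varphi_k}$, then spectral expansion with gap bounds. There is, however, a genuine gap at \eqref{eq:zk} when $0<\alpha<1$, and it propagates to \eqref{eq:qmode-L2close} and \eqref{eq:ordinary-transfer}.

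The bound $\abs{\ip{z_n}{\varphi_k}}\le P_k(\abs{z_n(1-)}+\Var_I(z_n))$, with $P_k=\sup_x\abs{\int_{-1}^x\varphi_k}$, needs $P_k\le C/k$. That in turn needs $\delta_k:=\norm[L^2(I)]{u_k-\varphi_k}\le C/k$. You import this from the literature without a rate. What $\norm[L^2(I)]{z_k}=O(k^{-1})$ and the gap $\abs{\lambda_j-\mu_k^\alpha}\gtrsim k^{\alpha-1}$ actually give is only $\delta_k=O(k^{-\alpha})$, and this is exactly where the paper's argument starts. For $\alpha<1$ that input yields $\abs{\ip{z_n}{\varphi_k}}\lesssim\mu_n^{-1}k^{-\alpha}$ rather than $\mu_n^{-1}k^{-1}$, which is not enough for your sums.

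The missing idea is a bootstrap.
\begin{enumerate}[label=\textup{(\arabic*)},leftmargin=2.2em]
\item The integration by parts gives $\abs{\ip{z_n}{\varphi_k}}\le C_\alpha\mu_n^{-1}(k^{-1}+\delta_k)$.
\item Insert $\delta_k\le Ck^{-\gamma}$ into $a_{kn}=\ip{z_n}{\varphi_k}/\bigl(\norm[L^2(I)]{\wt\phi_n}(\lambda_k-\mu_n^\alpha)\bigr)$ and use the gap bounds. This gives $\delta_n\le C_\alpha(n^{-\alpha-\min\{\gamma,1\}}+n^{-1-\alpha})$.
\item Start from $\gamma=\alpha$ and iterate $\lceil1/\alpha\rceil-1$ times to reach $\delta_k\le C_\alpha k^{-1}$.
\end{enumerate}
After that, \eqref{eq:zk} holds and your computations for \eqref{eq:qmode-L2close} and \eqref{eq:ordinary-transfer} go through. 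For $1\le\alpha<2$ the initial rate $k^{-\alpha}$ already suffices.

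The residual part is sound, and your first decomposition is more efficient than you exploit. Since $q+q(-\cdot)=1$ and $(-1)^{n+1}\sin(\mu_n(1-x)+\theta)=S_n(x)$ on all of $\R$, \eqref{half} together with $(\mathcal A-\mu_n^\alpha)S_n=0$ gives, on $I$,
\[
 z_n=(\mathcal A-\mu_n^\alpha)\bigl[q\,G_\alpha(\mu_n(1+\cdot))+(-1)^{n+1}q(-\cdot)\,G_\alpha(\mu_n(1-\cdot))\bigr].
\]
Both functions in the bracket are smooth with $W^{3,1}(\R)$-norm $O_\alpha(\mu_n^{-1-\alpha})$. \Cref{lem:operator-bound} and the differentiated tail expansion then give \eqref{eq:zBV} and $\mu_nz_n\to R_{\alpha,n}$ in $W^{1,1}(I)$, with no constants to match.

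The matching you call the main obstacle, and the $L^1$-mass $\asymp\mu_n^{-1}$ you invoke, belong to the paper's one-sided decomposition $\wt\phi_n=F_\alpha(\mu_n(1+\cdot))+g_n-h_n$. There the matching is just the definitional identity $c_\alpha(\cos\theta-M_\alpha)=c_\alpha\sqrt{\alpha/2}=K_\alpha$. Also, for $\alpha<1$ the tail remainder is $O(\mu_n^{-1-2\alpha})$, not $O(\mu_n^{-2-\alpha})$; this is harmless here.
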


\begin{proof}
By \cite[Lemma~1]{KwasnickiInterval} and
\cite[Lemma~3.3]{KaletaKwasnickiMalecki},
\[
 \wt\phi_n\in\Dom(\mathcal A_I),
 \qquad
 \mathcal A_I\wt\phi_n=(\mathcal A\wt\phi_n)|_I
 \quad\text{a.e. on }I.
\]

\paragraph{\textbf{Norm of the quasimode.}}
Since $0\le G_\alpha\le\sin\theta$, \eqref{eq:qmode-decomp} gives
$\norm[L^\infty(I)]{\wt\phi_n}\le C_\alpha$.  Equation
\eqref{eq:Gmass} also gives
\[
 \norm[L^1(I)]{\wt\phi_n-S_n}\le \frac{2M_\alpha}{\mu_n},
 \qquad
 \norm[L^2(I)]{S_n}^2=1+\frac{\sin(2\theta)}{2\mu_n}.
\]
Moreover,
\[
 \left|\norm[L^2(I)]{\wt\phi_n}^2-\norm[L^2(I)]{S_n}^2\right|
 \le (\norm[L^\infty]{\wt\phi_n}+\norm[L^\infty]{S_n})
 \norm[L^1]{\wt\phi_n-S_n}
 \le C_\alpha\mu_n^{-1}.
\]
Thus
\begin{equation}\label{eq:qnorm-rough}
 \norm[L^2(I)]{\wt\phi_n}=1+O_\alpha(\mu_n^{-1}),
 \qquad
 \sup_{x\in I}\left|\int_{-1}^x\wt\phi_n(t)\,dt\right|
 \le C_\alpha\mu_n^{-1}.
\end{equation}
Indeed, the antiderivative $x\mapsto\int_{-1}^xS_n(t)\,dt$ is bounded by $2/\mu_n$, and the other two
terms in \eqref{eq:qmode-decomp} have total $L^1$ norm at most
$2M_\alpha/\mu_n$.

For the sharper norm estimate, the change of variables $t=\mu_n(1+x)$ and
the tail estimates give
\[
 \begin{aligned}
 &\int_I q(-x)\left[F_\alpha(\mu_n(1+x))^2
 -\sin^2(\mu_n(1+x)+\theta)\right]dx\\
 &\qquad=\mu_n^{-1}\int_0^\infty
 \left[F_\alpha(t)^2-\sin^2(t+\theta)\right]dt
 +O_\alpha(\mu_n^{-1-\alpha}).
 \end{aligned}
\]
The integral obtained by the change of variables $x\mapsto-x$ has the same
value.  The cross term satisfies
\[
 \int_Iq(x)q(-x)\bigl[G_\alpha(\mu_n(1+x))
 -(-1)^{n+1} G_\alpha(\mu_n(1-x))\bigr]^2\,dx
 =O_\alpha(\mu_n^{-2-2\alpha}).
\]
Hence
\eqref{eq:halfline-norm-identity} yields
\[
 \norm[L^2(I)]{\wt\phi_n}^2
 =1+\frac{\sin(2\theta)}{2\mu_n}
 -\frac{\sin(2\theta)}{2\mu_n}+o_\alpha(\mu_n^{-1}),
\]
which proves \eqref{eq:qnorm-sharp}.

\medskip

\paragraph{\textbf{Residual estimate and limit.}}
On $\R$, set
\[
 a_n(y)=q(y)\one_{(-1,\infty)}(y)G_\alpha(\mu_n(1+y)),
 \qquad
 b_n(y)=q(y)\one_{(-\infty,1)}(y)G_\alpha(\mu_n(1-y)),
\]
\[
 g_n=a_n-(-1)^{n+1} b_n,
 \qquad h_n=S_n\one_{[1,\infty)}.
\]
By \eqref{eq:match},
$\wt\phi_n=F_\alpha(\mu_n(1+\cdot))+g_n-h_n$ on $\R$, and therefore
\begin{equation}\label{eq:error-decomp}
 z_n=\mathcal A g_n-\mu_n^\alpha g_n-\mathcal A h_n
 \quad\text{a.e. on }I.
\end{equation}
The differentiated tail estimates and \cref{lem:operator-bound} give
\[
 \norm[W^{1,1}(-1,0)]{\mathcal Aa_n}
 +\norm[W^{1,1}(-1,0)]{\mathcal A((1-\chi)b_n)}
 \le C_\alpha\mu_n^{-1-\alpha}.
\]
For $x\in(-1,0)$, the support of $\chi b_n$ is separated from $x$, and
$t=\mu_n(1-y)$ gives
\[
 \mu_n\mathcal A(\chi b_n)(x)
 =-c_\alpha\int_0^\infty
 \chi(1-t/\mu_n)G_\alpha(t)(1-x-t/\mu_n)^{-1-\alpha}\,dt.
\]
The integrand and its $x$-derivative are dominated by an integrable multiple
of $G_\alpha(t)$.  Dominated convergence therefore gives
\[
 \mu_n\mathcal Ab_n\longrightarrow
 -c_\alpha M_\alpha(1-x)^{-1-\alpha}
 \quad\text{in }W^{1,1}(-1,0).
\]
The differentiated tail expansion gives
\[
 \norm[W^{1,1}(-1,0)]{
 -\mu_n^{1+\alpha}g_n
 +K_\alpha q(x)(1+x)^{-1-\alpha}
 -(-1)^{n+1}K_\alpha q(x)(1-x)^{-1-\alpha}}
 \longrightarrow0.
\]

Since $S_n''=-\mu_n^2S_n$, $S_n(1)=(-1)^{n+1}\sin\theta$, and
$S_n'(1)=-(-1)^{n+1}\mu_n\cos\theta$, two integrations by parts give
\begin{equation}\label{eq:Ah-two-parts}
\begin{aligned}
 \int_1^\infty\frac{S_n(y)}{(y-x)^{1+\alpha}}\,dy
 &=-\frac{(-1)^{n+1}\cos\theta}{\mu_n}(1-x)^{-1-\alpha}
 +\frac{(-1)^{n+1}(1+\alpha)\sin\theta}{\mu_n^2}(1-x)^{-2-\alpha}\\
 &\quad-\frac{(1+\alpha)(2+\alpha)}{\mu_n^2}
 \int_1^\infty\frac{S_n(y)}{(y-x)^{3+\alpha}}\,dy.
\end{aligned}
\end{equation}
The last integral satisfies
\[
 \left\|\int_1^\infty
 \frac{S_n(y)}{(y-x)^{3+\alpha}}\,dy\right\|_{W_x^{1,1}(-1,0)}
 \le C_\alpha.
\]
Since $\mathcal Ah_n$ is $-c_\alpha$ times the left-hand side,
\[
 \norm[W^{1,1}(-1,0)]{
 \mu_n\mathcal Ah_n
 -(-1)^{n+1}c_\alpha\cos\theta\,(1-x)^{-1-\alpha}}
 \longrightarrow0.
\]
Substituting these estimates into \eqref{eq:error-decomp}, and using
\[
 c_\alpha(M_\alpha-\cos\theta)=-K_\alpha,
 \qquad q(x)-1=-q(-x),
\]
gives
\[
 \norm[W^{1,1}(-1,0)]{\mu_nz_n-R_{\alpha,n}}
 =o_\alpha(1).
\]
The estimate for
$\mathcal Aa_n+\mathcal A((1-\chi)b_n)$ and the preceding $W^{1,1}$ estimates give
$\norm[W^{1,1}(-1,0)]{z_n}\le C_\alpha\mu_n^{-1}$ for all large $n$.
Increasing $C_\alpha$ covers the remaining indices.

The symmetry $z_n(-x)=(-1)^{n+1}z_n(x)$ gives the $L^2(I)$ and
$L^\infty(I)$ bounds, while
\[
 \Var_I(z_n)\le2\Var_{(-1,0)}(z_n)+2|z_n(0-)|
 \le C\norm[W^{1,1}(-1,0)]{z_n}.
\]
This proves \eqref{eq:zBV}.  Equations \eqref{eq:R-reflection} and
$z_n(-x)=(-1)^{n+1}z_n(x)$, together with
$W^{1,1}(-1,0)\hookrightarrow L^2(-1,0)$, give the $L^2(I)$ term in
\eqref{eq:zlimit}.

\medskip

\paragraph{\textbf{Approximation of the eigenfunction.}}
Theorem~3 of \cite{NazarovComparison} and Theorem~2 of
\cite{MusinaNazarov}, followed by the min--max principle, give
\begin{equation}\label{eq:sandwich}
 \left(\frac{(k-1)\pi}{2}\right)^\alpha
 \le\lambda_k(\alpha)\le
 \left(\frac{k\pi}{2}\right)^\alpha.
\end{equation}
Consequently, for $k\ne n$ and all sufficiently large $n$,
\begin{equation}\label{eq:gaps}
 \abs{\lambda_k-\mu_n^\alpha}\ge C_\alpha^{-1}
 \begin{cases}
  \mu_n^{\alpha-1}\abs{k-n},&n/2\le k\le2n,\\
  \mu_n^\alpha,&k<n/2\text{ or }k>2n.
 \end{cases}
\end{equation}
Write $u_n=\sum_{k\ge1}a_{kn}\varphi_k$ with $a_{nn}\ge0$, and set
\[
 \delta_n=\norm[L^2(I)]{u_n-\varphi_n},
 \qquad
 P_k=\sup_{x\in I}\left|\int_{-1}^x\varphi_k(t)\,dt\right|.
\]
For $k\ne n$,
\[
 a_{kn}=\frac{\ip{z_n}{\varphi_k}}
 {\norm[L^2]{\wt\phi_n}(\lambda_k-\mu_n^\alpha)}.
\]
Since $u_n$ and $\varphi_n$ are normalized and $a_{nn}\ge0$,
\[
 \delta_n^2=2(1-a_{nn})\le2\sum_{k\ne n}|a_{kn}|^2.
\]
Using \eqref{eq:gaps} and
$\norm[L^2]{\wt\phi_n}\ge1/2$ for large $n$, we obtain
\begin{align*}
 \sum_{k\ne n}|a_{kn}|^2
 &\le C_\alpha\mu_n^{2-2\alpha}
   \sum_{\substack{n/2\le k\le2n\\k\ne n}}
   |\ip{z_n}{\varphi_k}|^2
 +C_\alpha\mu_n^{-2\alpha}
   \sum_{k<n/2\,\text{or}\,k>2n}|\ip{z_n}{\varphi_k}|^2\\
 &\le C_\alpha\left(\mu_n^{2-2\alpha}+\mu_n^{-2\alpha}\right)
 \norm[L^2(I)]{z_n}^2
 \le C_\alpha n^{-2\alpha},
\end{align*}
where the second line uses Parseval and \eqref{eq:zBV}.  Hence
$\delta_n\le C_\alpha n^{-\alpha}$.

The antiderivative estimate in \eqref{eq:qnorm-rough} and Cauchy--Schwarz give
\[
 P_k\le \sup_{x\in I}\left|\int_{-1}^x u_k(t)\,dt\right|
 +\norm[L^1(I)]{u_k-\varphi_k}
 \le C_\alpha k^{-1}+\sqrt2\,\delta_k.
\]
Hence
\begin{equation}\label{eq:Pk-delta}
 P_k\le C_\alpha(k^{-1}+\delta_k).
\end{equation}
Integration by parts for $z_n\in BV(I)$ gives
\[
 \abs{\ip{z_n}{\varphi_k}}
 \le P_k\bigl(\abs{z_n(1-)}+\Var_I(z_n)\bigr).
\]
Together with \eqref{eq:zBV}, this yields
\begin{equation}\label{eq:zk-preliminary}
 \abs{\ip{z_n}{\varphi_k}}
 \le C_\alpha n^{-1}(k^{-1}+\delta_k).
\end{equation}
Assume that $\delta_k\le C_\alpha k^{-\gamma}$ for all $k$.  For
$n/2\le k\le2n$, \eqref{eq:zk-preliminary} and the first line of
\eqref{eq:gaps} give
\[
 \begin{aligned}
 \sum_{\substack{n/2\le k\le2n\\k\ne n}}|a_{kn}|^2
 &\le C_\alpha n^{-2}
 \sum_{\substack{n/2\le k\le2n\\k\ne n}}
 \frac{k^{-2\min\{\gamma,1\}}}
 {\mu_n^{2\alpha-2}|k-n|^2}\\
 &\le C_\alpha n^{-2\alpha-2\min\{\gamma,1\}}.
 \end{aligned}
\]
In the two complementary ranges, the second line of \eqref{eq:gaps} and
Parseval give
\[
 \sum_{k<n/2\,\text{or}\,k>2n}|a_{kn}|^2
 \le C_\alpha\mu_n^{-2\alpha}
 \sum_{k\ge1}|\ip{z_n}{\varphi_k}|^2
 \le C_\alpha n^{-2-2\alpha}.
\]
Therefore
\begin{equation}\label{eq:delta-iteration}
 \delta_n\le C_\alpha\left(
 n^{-\alpha-\min\{\gamma,1\}}+n^{-1-\alpha}\right).
\end{equation}
If $\alpha\ge1$, the initial estimate already gives
$\delta_n\le C_\alpha n^{-1}$.  If $0<\alpha<1$, then each application
of \eqref{eq:delta-iteration}, while the current exponent is below $1$,
adds $\alpha$ to that exponent.  Applying it
$\lceil1/\alpha\rceil-1$ times therefore gives
$\delta_n\le C_\alpha n^{-1}$.  Thus
$\delta_k\le C_\alpha k^{-1}$ for all sufficiently large $k$, and the
constant can be enlarged to cover the finitely many remaining $k$.
Equations \eqref{eq:Pk-delta} and \eqref{eq:zk-preliminary} now prove
\eqref{eq:zk}.  One more application of \eqref{eq:delta-iteration} gives
\eqref{eq:qmode-L2close} for large $n$, and enlarging $C_\alpha$ covers the
remaining finite set of indices.

\medskip
\paragraph{\textbf{Rayleigh quotient.}}
Taking the inner product of
$z_n=(\mathcal A_I-\mu_n^\alpha)\wt\phi_n$ with $\varphi_n$ gives
\[
 (\lambda_n-\mu_n^\alpha)a_{nn}
 =\frac{\ip{z_n}{\varphi_n}}{\norm[L^2(I)]{\wt\phi_n}}.
\]
Since $a_{nn}\to1$, \eqref{eq:zk} with $k=n$ gives
\begin{equation}\label{eq:lambda-mu-close}
 \abs{\lambda_n-\mu_n^\alpha}=O_\alpha(\mu_n^{-2}).
\end{equation}
For $k\ne n$ and large $n$, this estimate and \eqref{eq:gaps} imply
$|\lambda_k-\lambda_n|\le
C_\alpha|\lambda_k-\mu_n^\alpha|$.  Expanding in the eigenbasis gives
\[
 \left|\frac{\calE_{\alpha,I}[\wt\phi_n]}
 {\norm[L^2]{\wt\phi_n}^2}-\lambda_n\right|
 \le C_\alpha\sum_{k\ne n}
 \frac{|\ip{z_n}{\varphi_k}|^2}{|\lambda_k-\mu_n^\alpha|}.
\]
The range $n/2\le k\le2n$ contributes
$O_\alpha(n^{-3-\alpha}\log n)$ by \eqref{eq:zk} and the first line of
\eqref{eq:gaps}.  Parseval, \eqref{eq:zBV}, and the second line of
\eqref{eq:gaps} bound the two complementary ranges by
$O_\alpha(n^{-2-\alpha})$.  Thus the last display is at most
$C_\alpha n^{-2-\alpha}$.
This proves \eqref{eq:ordinary-transfer}.
\end{proof}

\begin{theorem}[Two-term asymptotic formula]\label{thm:two-term}
For every fixed $0<\alpha<2$,
\begin{equation}\label{eq:third}
 \lambda_n=\mu_n^\alpha+(-1)^n\kappa_\alpha\mu_n^{-2}
 +o_\alpha(\mu_n^{-2}).
\end{equation}
\end{theorem}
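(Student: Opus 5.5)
The plan is to read the eigenvalue correction off the exact identity from the proof of \Cref{prop:qmode},
\[
 (\lambda_n-\mu_n^\alpha)\,a_{nn}=\frac{\ip{z_n}{\varphi_n}}{\norm[L^2(I)]{\wt\phi_n}},
\]
and to compute $\ip{z_n}{\varphi_n}$ to precision $o_\alpha(\mu_n^{-2})$.

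\textbf{Step 1: reduce to the quasimode.} Since $a_{nn}=1-\tfrac12\delta_n^2=1+O_\alpha(n^{-2-2\alpha})$ and $\norm[L^2(I)]{\wt\phi_n}^2=1+o_\alpha(\mu_n^{-1})$, and since $\ip{z_n}{\varphi_n}=O(\mu_n^{-2})$ by \eqref{eq:zk}, it suffices to show
\[
 \ip{z_n}{\varphi_n}=(-1)^n\kappa_\alpha\mu_n^{-2}+o_\alpha(\mu_n^{-2}).
\]
Next I replace $\varphi_n$ by $u_n$, or equivalently by $\wt\phi_n$, at cost
\[
 \norm[L^2]{z_n}\,\norm[L^2]{u_n-\varphi_n}\le C_\alpha\mu_n^{-1}n^{-1-\alpha}=o(\mu_n^{-2}),
\]
using \eqref{eq:zBV} and \eqref{eq:qmode-L2close}. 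The normalizing factor $\norm[L^2]{\wt\phi_n}^{-1}=1+o(\mu_n^{-1})$ again only affects lower order. So the target becomes $\ip{z_n}{\wt\phi_n}$.

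\textbf{Step 2: replace $\mu_nz_n$ by $R_{\alpha,n}$.} Both $z_n$ and $\wt\phi_n$ have parity $(-1)^{n+1}$, so their product is even and
\[
 \ip{z_n}{\wt\phi_n}=2\int_{-1}^0 z_n\wt\phi_n\,dx.
\]
On $(-1,0)$ I write $\mu_nz_n=R_{\alpha,n}+e_n$, where $\norm[W^{1,1}(-1,0)]{e_n}=o_\alpha(1)$ by \eqref{eq:zlimit}. The key point is that a naive $L^2$ bound only gives $o(\mu_n^{-1})$, so I must exploit oscillation. Integrating by parts with $\Phi_n(x)=\int_{-1}^x\wt\phi_n$, which satisfies $\sup\abs{\Phi_n}\le C_\alpha\mu_n^{-1}$ by \eqref{eq:qnorm-rough}, gives for $f\in W^{1,1}(-1,0)$
\[
 \Bigl|\int_{-1}^0 f\wt\phi_n\Bigr|\le C_\alpha\mu_n^{-1}\bigl(\abs{f(0-)}+\norm[L^1]{f'}\bigr)\le C_\alpha\mu_n^{-1}\norm[W^{1,1}(-1,0)]{f}.
\]
Hence $\int_{-1}^0 e_n\wt\phi_n=o(\mu_n^{-1})$, and therefore
\[
 \ip{z_n}{\wt\phi_n}=2\mu_n^{-1}\int_{-1}^0R_{\alpha,n}\wt\phi_n\,dx+o(\mu_n^{-2}).
\]

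\textbf{Step 3: evaluate $\int_{-1}^0R_{\alpha,n}\wt\phi_n$ to order $\mu_n^{-1}$.} I use the decomposition \eqref{eq:qmode-decomp}.
\begin{enumerate}[label=\textup{(\alph*)},leftmargin=2.2em]
\item \emph{Sine part.} $R_{\alpha,n}$ is smooth on $[-1,1]$, because its first summand vanishes near $-1$ and its second near $1$. One integration by parts of $\int_I R_{\alpha,n}S_n$ therefore gives boundary terms $-\mu_n^{-1}[R_{\alpha,n}\cos(\mu_n(1+x)+\theta)]_{-1}^{1}$ plus $O(\mu_n^{-2})$. With $\cos(2\mu_n+\theta)=(-1)^n\cos\theta$ and $R_{\alpha,n}(\pm1)$ read off from \eqref{eq:R-alpha-n}, I expect these boundary terms to cancel. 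Whatever survives is recorded explicitly.
\item \emph{Near $G$-term.} On $(-1,0)$ the term $-q(-x)G_\alpha(\mu_n(1+x))$ concentrates at $x=-1$. There $R_{\alpha,n}$ equals its boundary value $R_{\alpha,n}(-1)=-(-1)^{n+1}K_\alpha2^{-1-\alpha}$ plus $O(1+x)$. By \eqref{eq:Gmass}, and the integrability of $tG_\alpha$ or the tail bounds, this contributes
\[
 -R_{\alpha,n}(-1)\,M_\alpha\,\mu_n^{-1}+o(\mu_n^{-1}).
\]
\item \emph{Far $G$-term.} The term with $G_\alpha(\mu_n(1-x))$ is $O(\mu_n^{-1-\alpha})$ on $(-1,0)$.
\end{enumerate}
Summing (a)--(c), multiplying by $2\mu_n^{-1}$, and inserting $c_\alpha(M_\alpha-\cos\theta)=-K_\alpha$ together with $K_\alpha=c_\alpha\sqrt{\alpha/2}$ should produce exactly $(-1)^n\kappa_\alpha\mu_n^{-2}$, with $\kappa_\alpha=\alpha c_\alpha2^{-\alpha-1}$.

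The main obstacle is this final constant bookkeeping. Several $O(\mu_n^{-1})$ contributions must be tracked, namely the boundary terms of the sine part, the $M_\alpha$ term, and the $L^2$-norm normalization, and some of them cancel. If the constant does not close, I will recheck the sine boundary terms, where a sign error between $R_{\alpha,n}(1)$ and $R_{\alpha,n}(-1)$ is easy to make. I will also recheck whether $\norm[L^2]{\wt\phi_n}^{-1}$ must be kept to order $\mu_n^{-1}$; by \eqref{eq:qnorm-sharp} it need not be.
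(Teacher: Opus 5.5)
Your reductions in Steps 1--2 are correct and close to the paper's argument. Using the exact identity $(\lambda_n-\mu_n^\alpha)a_{nn}=\ip{z_n}{\varphi_n}/\norm[L^2(I)]{\wt\phi_n}$ together with \eqref{eq:qmode-L2close} is a legitimate substitute for the paper's Rayleigh quotient plus \eqref{eq:ordinary-transfer}, since the swap costs only $O(\mu_n^{-1}n^{-1-\alpha})$. Absorbing the $W^{1,1}$ error $e_n$ through the antiderivative bound \eqref{eq:qnorm-rough} is a tidy replacement for the paper's separate pairings of $z_n$ with $S_n$ and with the $G_\alpha$-terms. Steps 3(b) and 3(c) are also right.

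The problem is Step 3(a), which is exactly where the constant is decided, and there your stated expectation is wrong: the endpoint terms do not cancel, they add. By \eqref{eq:R-alpha-n}, $R_{\alpha,n}(1)=-K_\alpha2^{-1-\alpha}$ and $R_{\alpha,n}(-1)=(-1)^{n+1}R_{\alpha,n}(1)$, while $\cos(2\mu_n+\theta)=(-1)^n\cos\theta$. Hence
\[
 -\mu_n^{-1}\bigl[R_{\alpha,n}\cos(\mu_n(1+x)+\theta)\bigr]_{-1}^{1}
 =-\mu_n^{-1}\cos\theta\,\bigl((-1)^nR_{\alpha,n}(1)-R_{\alpha,n}(-1)\bigr)
 =2\mu_n^{-1}R_{\alpha,n}(-1)\cos\theta .
\]
Since $R_{\alpha,n}S_n$ is even, this gives $\int_{-1}^0R_{\alpha,n}S_n\,dx=\mu_n^{-1}R_{\alpha,n}(-1)\cos\theta+O(\mu_n^{-2})$. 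Keep this factor $\tfrac12$ when passing from $I$ back to $(-1,0)$, because Step 2 already inserted the factor $2$.

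This surviving term is indispensable. Combined with 3(b), it gives
\[
 \int_{-1}^0R_{\alpha,n}\wt\phi_n\,dx=\mu_n^{-1}R_{\alpha,n}(-1)(\cos\theta-M_\alpha)+o(\mu_n^{-1}).
\]
Only then does $\cos\theta-M_\alpha=\sqrt{\alpha/2}$ yield
\[
 \ip{z_n}{\wt\phi_n}=2\mu_n^{-2}R_{\alpha,n}(-1)\sqrt{\alpha/2}+o(\mu_n^{-2})=(-1)^n\kappa_\alpha\mu_n^{-2}+o(\mu_n^{-2}).
\]
Had the boundary terms cancelled, you would have obtained $(-1)^{n+1}K_\alpha2^{-\alpha}M_\alpha\mu_n^{-2}$, which has the wrong sign (since $M_\alpha>0$) and the wrong size. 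Your own closing step invokes $c_\alpha(M_\alpha-\cos\theta)=-K_\alpha$, which presupposes that a $\cos\theta$ contribution survives, so the plan as written is internally inconsistent.

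The cancellation you may have had in mind is the one in the paper's half-interval computation. There, integrating by parts on $(-1,0)$, the endpoint term at $x=0$, namely $R_{\alpha,n}(0)\cos(\mu_n+\theta)$, vanishes by the parity $R_{\alpha,n}(-x)=(-1)^{n+1}R_{\alpha,n}(x)$ together with $\mu_n+\theta=n\pi/2$. The term at $x=-1$ survives. With this correction your argument closes and reduces to the paper's computation of $2\mu_n^{-2}R_{\alpha,n}(-1)(\cos\theta-M_\alpha)$.
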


\begin{proof}
Equation \eqref{eq:zlimit} gives
\[
 \norm[W^{1,1}(-1,0)]{
 z_n-\mu_n^{-1}R_{\alpha,n}}=o_\alpha(\mu_n^{-1}).
\]
Integration by parts gives
\[
 \begin{aligned}
 \int_{-1}^0R_{\alpha,n}(x)S_n(x)\,dx
 &=\frac{R_{\alpha,n}(-1)\cos\theta-R_{\alpha,n}(0)\cos(\mu_n+\theta)}{\mu_n}\\
 &\quad+\frac1{\mu_n}\int_{-1}^0
 R_{\alpha,n}'(x)\cos(\mu_n(1+x)+\theta)\,dx.
 \end{aligned}
\]
By \eqref{eq:R-reflection},
$R_{\alpha,n}(0)=(-1)^{n+1}R_{\alpha,n}(0)$, while
\[
 (-1)^{n+1}\cos(\mu_n+\theta)
 =-\cos(\mu_n+\theta).
\]
Hence
$R_{\alpha,n}(0)\cos(\mu_n+\theta)=0$.
Since $R_{\alpha,n+2}=R_{\alpha,n}$, the Riemann--Lebesgue lemma applied
to the two functions in this family gives
\[
 \int_{-1}^0R_{\alpha,n}'(x)
 \cos(\mu_n(1+x)+\theta)\,dx=o_\alpha(1).
\]
A second integration by parts applied to
$z_n-\mu_n^{-1}R_{\alpha,n}$ therefore gives
\begin{equation}\label{eq:S-pairing-two-term}
 \int_{-1}^0z_n(x)S_n(x)\,dx
 =\mu_n^{-2}R_{\alpha,n}(-1)\cos\theta
 +o_\alpha(\mu_n^{-2}).
\end{equation}
Furthermore, the substitution $t=\mu_n(1+x)$ gives
\[
 \begin{aligned}
 &\int_{-1}^0z_n(x)q(-x)G_\alpha(\mu_n(1+x))\,dx\\
 &\quad=\mu_n^{-2}\int_0^{\mu_n}
 \mu_nz_n(-1+t/\mu_n)q(1-t/\mu_n)G_\alpha(t)\,dt\\
 &\quad=\mu_n^{-2}R_{\alpha,n}(-1)M_\alpha
 +o_\alpha(\mu_n^{-2}).
 \end{aligned}
\]
Here \eqref{eq:zlimit}, the identity
$R_{\alpha,n+2}=R_{\alpha,n}$, and $G_\alpha\in L^1(0,\infty)$ justify
dominated convergence.  Since $q(x)=0$ for $x\le-1/3$,
\[
 \left|\int_{-1}^0z_n(x)q(x)G_\alpha(\mu_n(1-x))\,dx\right|
 \le C_\alpha\mu_n^{-2-\alpha}
\]
by \eqref{eq:zBV} and the tail estimate for $G_\alpha$.

The identities
\[
 z_n(-x)=(-1)^{n+1}z_n(x),
 \qquad
 \wt\phi_n(-x)=(-1)^{n+1}\wt\phi_n(x),
\]
together with \eqref{eq:qmode-decomp} and
\eqref{eq:S-pairing-two-term}, now give
\[
 \ip{z_n}{\wt\phi_n}
 =2\mu_n^{-2}R_{\alpha,n}(-1)
 (\cos\theta-M_\alpha)+o_\alpha(\mu_n^{-2}).
\]
By \eqref{eq:R-alpha-n},
\[
 R_{\alpha,n}(-1)
 =-(-1)^{n+1}K_\alpha2^{-1-\alpha},
 \qquad
 \cos\theta-M_\alpha=\sqrt{\alpha/2}.
\]
Consequently,
\begin{equation}\label{eq:third-pairing}
 \ip{z_n}{\wt\phi_n}
 =(-1)^n\kappa_\alpha\mu_n^{-2}
 +o_\alpha(\mu_n^{-2}).
\end{equation}
Finally,
\[
 \frac{\calE_{\alpha,I}[\wt\phi_n]}{\norm[L^2(I)]{\wt\phi_n}^2}-\mu_n^\alpha
 =\frac{\ip{z_n}{\wt\phi_n}}{\norm[L^2(I)]{\wt\phi_n}^2}.
\]
Equations \eqref{eq:qnorm-sharp}, \eqref{eq:ordinary-transfer}, and
\eqref{eq:third-pairing} prove \eqref{eq:third}.
\end{proof}
\section{The range \texorpdfstring{$1\le\alpha<2$}{1 <= alpha < 2}}
\label{sec:large-unified}

This section proves \eqref{eq:main-critical} and \eqref{eq:main-large}. The strategy in Section \ref{sec:glued} still applies.
At $\alpha=1$ we determine the $\mu_n^{-2}\log\mu_n$ term in $z_n$, and
for $1<\alpha<2$ we determine its $\mu_n^{-2}$ term.  Then we evaluate
$\langle z_n,\wt\phi_n\rangle$ to obtain the third term in the asymptotic formula.

At $\alpha=1$,
\[
 K_1=\frac1{\pi\sqrt2},
 \qquad
 M_1=\cos\frac\pi8-\frac1{\sqrt2}
 =\int_0^\infty G_1(t)\,dt.
\]

\begin{proposition}\label{prop:alpha-ge-one}
Equations \eqref{eq:main-critical} and \eqref{eq:main-large} hold.
\end{proposition}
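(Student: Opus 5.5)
We follow the strategy of \Cref{thm:two-term}, sharpening each asymptotic step by one order. For $1\le\alpha<2$ the error in \eqref{eq:ordinary-transfer} is $O_\alpha(\mu_n^{-2-\alpha})$, which is $o_\alpha(\mu_n^{-3})$ when $\alpha>1$ and $O(\mu_n^{-3})$ when $\alpha=1$. The same holds for the $o_\alpha(\mu_n^{-1})$ in \eqref{eq:qnorm-sharp} once it is refined to $O_\alpha(\mu_n^{-1-\alpha})$, as the computation there actually gives. It therefore suffices to expand $\ip{z_n}{\wt\phi_n}$ through order $\mu_n^{-3}\log\mu_n$ (for $\alpha=1$) or $\mu_n^{-3}$ (for $1<\alpha<2$). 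By the symmetry $z_n(-x)=(-1)^{n+1}z_n(x)$ it is enough to work on $(-1,0)$.

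\textbf{Refined residual.} We refine \eqref{eq:zlimit} to a two-term expansion in $W^{1,1}(-1,0)$:
\[
 \mu_nz_n=R_{\alpha,n}+\mu_n^{-1}\rho_n\,R^{(2)}_{\alpha,n}+o_\alpha(\mu_n^{-1}),
\]
with $\rho_n=\log\mu_n$ for $\alpha=1$ and $\rho_n=1$ for $1<\alpha<2$. The three pieces of \eqref{eq:error-decomp} are expanded one order further.
\begin{enumerate}[label=(\alph*)]
\item For $g_n$, we use the second terms of \eqref{eq:Gcritical} and \eqref{eq:Glarge}. These give $-\mu_n^{-2-\alpha}K_{2,\alpha}(1\pm x)^{-2-\alpha}$, and the analogous $\log$ term at $\alpha=1$.
\item For $\mathcal A(\chi b_n)$, we Taylor expand $(1-x-t/\mu_n)^{-1-\alpha}$ to first order in $t/\mu_n$. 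This produces $-c_\alpha(1+\alpha)\mu_n^{-2}(1-x)^{-2-\alpha}\int_0^{\mu_n}tG_\alpha$, which is $M_{1,\alpha}$ from \eqref{eq:M1} when $\alpha>1$, and $K_1\log\mu_n+O(1)$ by \eqref{eq:Gcritical} when $\alpha=1$. The remainder is controlled by $\int t^2G_\alpha\,dt$ truncated at $\mu_n$, which is $o(\mu_n)$.
\item For $\mathcal Ah_n$, \eqref{eq:Ah-two-parts} already supplies the $\mu_n^{-2}(1+\alpha)\sin\theta(1-x)^{-2-\alpha}$ term.
\end{enumerate}
The terms $\mathcal Aa_n$ and $(1-\chi)b_n$ are $O(\mu_n^{-1-\alpha})=o(\mu_n^{-2})$. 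Only in the critical case $\alpha=1$ is this merely $O(\mu_n^{-2})$, but that is harmless against the $\log$ term. At $\alpha=1$ the leading correction is $-c_1\cdot2\mu_n^{-2}\log\mu_n\,K_1(1-x)^{-3}$ from (b), while the $g_n$ log terms are $O(\mu_n^{-3}\log\mu_n)$ and negligible. For $\alpha>1$ we collect $c_\alpha(1+\alpha)(\sin\theta+M_{1,\alpha})=-c_\alpha(1+\alpha)\sqrt{\alpha/2}\cot(\pi/\alpha)$. This yields $R^{(2)}_{\alpha,n}=-(-1)^{n+1}(1+\alpha)K_\alpha\cot(\pi/\alpha)(1-x)^{-2-\alpha}$ near $x=-1$.

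\textbf{Pairing.} We redo the computation of \eqref{eq:third-pairing} with the extra term. The leading part $\mu_n^{-1}R_{\alpha,n}$ must now be paired with $S_n$ and with $q(-x)G_\alpha(\mu_n(1+x))$ to one more order. For the $S_n$ pairing, the second integration by parts gives $\mu_n^{-2}\bigl[R'_{\alpha,n}(-1)\sin\theta-\dots\bigr]$, plus oscillatory integrals that are $O(\mu_n^{-3})$ after a third integration by parts, using smoothness of $R_{\alpha,n}$. The boundary terms at $0$ cancel by \eqref{eq:R-reflection}. For the $G_\alpha$ pairing, expanding $R_{\alpha,n}(-1+t/\mu_n)$ gives $\mu_n^{-3}R'_{\alpha,n}(-1)M_{1,\alpha}$, or at $\alpha=1$ a $\log$ term via \eqref{eq:Gcritical}. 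The new term $\mu_n^{-2}\rho_nR^{(2)}$ contributes $\mu_n^{-3}\rho_nR^{(2)}(-1)(\cos\theta-M_\alpha)$, exactly as in the proof of \Cref{thm:two-term}. Summing, the $R'_{\alpha,n}(-1)$ contributions should combine with $R^{(2)}$ into $(-1)^n\eta_\alpha\mu_n^{-3}$. For $\alpha=1$ the only logarithmic contributions are $2\cdot\sqrt{1/2}\cdot(-2K_1c_1/\pi\cdot\text{const})$, which should equal $(-1)^{n+1}/(2\pi^2)$.

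\textbf{Main obstacle.} The hardest step is the error control in the refined expansion. To pair the $o(\mu_n^{-2})$ residual remainder against $S_n$ and still gain the factor $\mu_n^{-1}$, we need convergence in $W^{1,1}(-1,0)$, so that one integration by parts is available. For $1<\alpha<2$ this requires dominated convergence for $\mu_n\bigl[\mu_n\mathcal A(\chi b_n)-\text{leading}\bigr]$ and its $x$-derivative. The integrand is dominated by $tG_\alpha(t)$, which is integrable only because $\alpha>1$; this is exactly what fails at $\alpha=1$ and forces the logarithm. At $\alpha=1$ we would handle this by splitting $\int_0^{\mu_n}$ at $t=1$ and using the explicit tail $K_1t^{-2}$.

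We also need to recheck the constants in the two integrations by parts, in particular the boundary value $S_n'(-1)=\mu_n\cos\theta$ against $R'_{\alpha,n}(-1)$. We expect this bookkeeping to produce the factor $(1+\alpha)\kappa_\alpha\cot(\pi/\alpha)$, with the $R'$ terms cancelling, but we would verify it carefully.
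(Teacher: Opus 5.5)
Your route is the paper's: expand $z_n$ to second order in $W^{1,1}(-1,0)$, then pair with $S_n$ and with $q(-x)G_\alpha(\mu_n(1+x))$. The gap is that the proposition is precisely the value of the resulting constant, and you leave it as an expectation. Where you do commit, the bookkeeping is wrong, and followed literally it would give half the correct coefficient in both regimes.

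\textbf{The $g_n$ log term at $\alpha=1$ is not negligible.} The $g_n$ piece enters $z_n$ as $-\mu_n g_n$, not $g_n$. Its logarithmic part is therefore $\frac{2K_1}{\pi}\mu_n^{-2}\log\mu_n\,q(x)\bigl((1+x)^{-3}-(-1)^{n+1}(1-x)^{-3}\bigr)$. This is of the same order as your term from (b), not $O(\mu_n^{-3}\log\mu_n)$. It vanishes near $x=-1$, but it is still needed. Added to (b), it makes the $\mu_n^{-2}\log\mu_n$ coefficient equal to $\frac{2K_1}{\pi}\bigl(q(x)(1+x)^{-3}+(-1)^{n+1}q(-x)(1-x)^{-3}\bigr)$. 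That function has the reflection symmetry which kills the boundary term $-P(0)\cos(\mu_n+\theta)/\mu_n$ at $x=0$. With $(-1)^{n+1}\frac{2K_1}{\pi}(1-x)^{-3}$ alone, that boundary term is nonzero for even $n$ and enters at order $\mu_n^{-3}\log\mu_n$.

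\textbf{The $R_{\alpha,n}'(-1)$ terms do not cancel.} The $x=-1$ boundary term of the second integration by parts is $-R_{\alpha,n}'(-1)\sin\theta\,\mu_n^{-2}$; your sign is reversed. The $G_\alpha$-pairing enters with the minus sign it carries in $\wt\phi_n$. Hence $\mu_n^{-1}R_{\alpha,n}$ contributes $-R_{\alpha,n}'(-1)(\sin\theta+M_{1,\alpha})\mu_n^{-3}$ on $(-1,0)$. By \eqref{eq:M1} this equals $R_{\alpha,n}'(-1)\sqrt{\alpha/2}\cot(\pi/\alpha)\,\mu_n^{-3}$, which is nonzero. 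After doubling it is exactly $(-1)^n\tfrac12\eta_\alpha\mu_n^{-3}$. Your term $R^{(2)}_{\alpha,n}(-1)(\cos\theta-M_\alpha)\mu_n^{-3}$ supplies the other half. The two halves coincide because $K_{2,\alpha}=(1+\alpha)K_\alpha\cot(\pi/\alpha)$ and $\sin\theta+M_{1,\alpha}=-\sqrt{\alpha/2}\cot(\pi/\alpha)$.

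\textbf{The logarithm at $\alpha=1$ has two equal sources.}
\begin{itemize}
\item The $G_1$-pairing of $\mu_n^{-1}R_{1,n}$ gives $-R_{1,n}'(-1)K_1\mu_n^{-3}\log\mu_n=(-1)^{n+1}(8\pi^2)^{-1}\mu_n^{-3}\log\mu_n$, via \eqref{eq:Gcritical}.
\item The new term gives $R^{(2)}_{1,n}(-1)(\cos\theta-M_1)=(-1)^{n+1}K_1/(4\sqrt2\,\pi)=(-1)^{n+1}(8\pi^2)^{-1}$.
\end{itemize}
Your sentence about the ``only logarithmic contributions'' counts only the second. That yields $1/(4\pi^2)$ instead of $1/(2\pi^2)$, and it contradicts your own earlier remark that the $G_\alpha$-pairing produces a log term.

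So the final summation must actually be carried out, keeping both sources, rather than asserted.

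Two smaller points. At $\alpha=1$, $\int_0^{\mu_n}t^2G_1\,dt$ is of order $\mu_n$, not $o(\mu_n)$; this is still enough for an $O(\mu_n^{-2})$ Taylor remainder. Also, no refinement of \eqref{eq:qnorm-sharp} is needed, since $\ip{z_n}{\wt\phi_n}=O(\mu_n^{-2})$.
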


\begin{proof}

\medskip
\noindent\textbf{Expansion of the residual.}\quad
By \eqref{eq:error-decomp}, on $(-1,0)$,
\[
 z_n=\mathcal Aa_n-(-1)^{n+1}\mathcal Ab_n-\mu_n^\alpha g_n-\mathcal Ah_n.
\]
The differentiated tail estimates and \cref{lem:operator-bound} give
\[
 \norm[W^{1,1}(-1,0)]{\mathcal Aa_n}
 +\norm[W^{1,1}(-1,0)]{\mathcal A((1-\chi)b_n)}
 \le
 \begin{cases}
  C\mu_n^{-2},&\alpha=1,\\
  C_\alpha\mu_n^{-1-\alpha}=o_\alpha(\mu_n^{-2}),&1<\alpha<2.
 \end{cases}
\]
For $x\in(-1,0)$, the substitution $t=\mu_n(1-y)$ gives
\[
 \mathcal A(\chi b_n)(x)
 =-\frac{c_\alpha}{\mu_n}\int_0^\infty
 \frac{\chi(1-t/\mu_n)G_\alpha(t)}{(1-x-t/\mu_n)^{1+\alpha}}\,dt.
\]
On the support of the integrand, $1-x-t/\mu_n\ge1/3$, and Taylor's formula gives
\[
 \begin{aligned}
 (1-x-t/\mu_n)^{-1-\alpha}
 &=(1-x)^{-1-\alpha}
 +(1+\alpha)\frac{t}{\mu_n}(1-x)^{-2-\alpha}\\
 &\quad+\frac{(1+\alpha)(2+\alpha)t^2}{\mu_n^2}
 \int_0^1\frac{1-r}{(1-x-rt/\mu_n)^{3+\alpha}}\,dr.
 \end{aligned}
\]
Because $\chi(1-t/\mu_n)=1$ for $t\le\mu_n/2$ and vanishes for
$t\ge2\mu_n/3$, \eqref{eq:Gcritical}--\eqref{eq:Glarge} imply
\[
 \begin{aligned}
 \int_0^\infty\chi(1-t/\mu_n)G_\alpha(t)\,dt
 &=M_\alpha+
  O_\alpha(\mu_n^{-\alpha}),
  \ 1\le\alpha<2,\\
 \int_0^\infty t\chi(1-t/\mu_n)G_\alpha(t)\,dt
 &=
 \begin{cases}
  K_1\log\mu_n+O(1),&\alpha=1,\\
  M_{1,\alpha}+O_\alpha(\mu_n^{1-\alpha}),&1<\alpha<2,
 \end{cases}\\
 \int_0^\infty t^2\chi(1-t/\mu_n)G_\alpha(t)\,dt
 &=
  O_\alpha(\mu_n^{2-\alpha}),\ 1\le\alpha<2. 
 \end{aligned}
\]
The Taylor remainder is therefore $O(\mu_n^{-2})$ in
$W^{1,1}(-1,0)$ when $\alpha=1$ and
$O_\alpha(\mu_n^{-1-\alpha})=o_\alpha(\mu_n^{-2})$ when $\alpha>1$.
Together with the estimate for $(1-\chi)b_n$, this gives in
$W^{1,1}(-1,0)$
\[
 \mathcal Ab_n(x)=
 \begin{cases}
 -c_1M_1\mu_n^{-1}(1-x)^{-2}
 -2c_1K_1\mu_n^{-2}\log\mu_n\,(1-x)^{-3}
 +O(\mu_n^{-2}),&\alpha=1,\\[1mm]
 -c_\alpha M_\alpha\mu_n^{-1}(1-x)^{-1-\alpha}
 -c_\alpha(1+\alpha)M_{1,\alpha}\mu_n^{-2}(1-x)^{-2-\alpha}
 +o_\alpha(\mu_n^{-2}),&1<\alpha<2.
 \end{cases}
\]
The tail expansions also give in $W^{1,1}(-1,0)$
\[
 -\mu_n^\alpha g_n=
 \begin{cases}
 -K_1\mu_n^{-1}q(x)\bigl((1+x)^{-2}-(-1)^{n+1}(1-x)^{-2}\bigr)\\
 \qquad+\dfrac{2K_1}{\pi}\mu_n^{-2}\log\mu_n\,
 q(x)\bigl((1+x)^{-3}-(-1)^{n+1}(1-x)^{-3}\bigr)
 +O(\mu_n^{-2}),&\alpha=1,\\[2mm]
 -K_\alpha\mu_n^{-1}q(x)\bigl((1+x)^{-1-\alpha}-(-1)^{n+1}(1-x)^{-1-\alpha}\bigr)\\
 \qquad-K_{2,\alpha}\mu_n^{-2}q(x)
 \bigl((1+x)^{-2-\alpha}-(-1)^{n+1}(1-x)^{-2-\alpha}\bigr)
 +o_\alpha(\mu_n^{-2}),&1<\alpha<2.
 \end{cases}
\]
For $\alpha=1$, the terms containing $\log(1\pm x)$ are
$O(\mu_n^{-2})$ in $W^{1,1}(-1,0)$ because $q$ vanishes near $x=-1$.

A further integration by parts in the last integral of
\eqref{eq:Ah-two-parts} gives
\[
 \int_1^\infty\frac{S_n(y)}{(y-x)^{3+\alpha}}\,dy
 =-\frac{(-1)^{n+1}\cos\theta}{\mu_n}(1-x)^{-3-\alpha}
 -\frac{3+\alpha}{\mu_n}\int_1^\infty
 \frac{\cos(\mu_n(1+y)+\theta)}{(y-x)^{4+\alpha}}\,dy.
\]
The last integral and its $x$-derivative have bounded $L^1(-1,0)$ norms.
Hence, in $W^{1,1}(-1,0)$,
\[
 \begin{aligned}
 \mathcal Ah_n(x)
 ={}&(-1)^{n+1}c_\alpha\cos\theta\,
 \mu_n^{-1}(1-x)^{-1-\alpha}\\
 &-(-1)^{n+1}c_\alpha(1+\alpha)\sin\theta\,
 \mu_n^{-2}(1-x)^{-2-\alpha}+O_\alpha(\mu_n^{-3}).
 \end{aligned}
\]
Substituting these formulas into \eqref{eq:error-decomp} and using $c_1=1/\pi$ and
\[
 c_\alpha(M_\alpha-\cos\theta)=-K_\alpha,
 \qquad q(x)-1=-q(-x),
\]
together with
\[
 c_\alpha(1+\alpha)(M_{1,\alpha}+\sin\theta)=-K_{2,\alpha}
 \qquad(1<\alpha<2),
\]
yields the following two expansions in $W^{1,1}(-1,0)$:
\[
 z_n=\mu_n^{-1}R_{1,n}
 +\frac{2K_1}{\pi}\mu_n^{-2}\log\mu_n
 \bigl(q(x)(1+x)^{-3}+(-1)^{n+1} q(-x)(1-x)^{-3}\bigr)
 +O(\mu_n^{-2})
\]
when $\alpha=1$, and
\[
 z_n=\mu_n^{-1}R_{\alpha,n}
 -K_{2,\alpha}\mu_n^{-2}
 \bigl(q(x)(1+x)^{-2-\alpha}
 +(-1)^{n+1} q(-x)(1-x)^{-2-\alpha}\bigr)
 +o_\alpha(\mu_n^{-2})
\]
when $1<\alpha<2$.

\medskip
\noindent\textbf{Pairing with the quasimode.}\quad
Let $P_n$ denote $R_{\alpha,n}$ or one of the bracketed coefficient
functions in the two residual expansions above.  Each such family satisfies
\[
 P_{n+2}=P_n,
 \qquad
 P_n(-x)=(-1)^{n+1}P_n(x),
 \qquad
 P_n''\in L^1(-1,0).
\]
Two integrations by parts give
\[
 \begin{aligned}
 \int_{-1}^0P_n(x)S_n(x)\,dx
 &=\frac{P_n(-1)\cos\theta-P_n(0)\cos(\mu_n+\theta)}{\mu_n}\\
 &\quad+\frac{P_n'(0)\sin(\mu_n+\theta)-P_n'(-1)\sin\theta}{\mu_n^2}
 -\frac1{\mu_n^2}\int_{-1}^0P_n''(x)S_n(x)\,dx.
 \end{aligned}
\]
The identity $P_n(-x)=(-1)^{n+1}P_n(x)$ gives
\[
 P_n(0)=(-1)^{n+1}P_n(0),
 \qquad
 P_n'(0)=-(-1)^{n+1}P_n'(0),
\]
while $\mu_n+\theta=n\pi/2$ gives
\[
 (-1)^{n+1}\cos(\mu_n+\theta)=-\cos(\mu_n+\theta),
 \qquad
 (-1)^{n+1}\sin(\mu_n+\theta)=\sin(\mu_n+\theta).
\]
Therefore
\[
 P_n(0)\cos(\mu_n+\theta)=0,
 \qquad
 P_n'(0)\sin(\mu_n+\theta)=0.
\]
Since $P_{n+2}=P_n$, the Riemann--Lebesgue lemma applied to the two
members of each family gives
\[
 \int_{-1}^0P_n(x)S_n(x)\,dx
 =\frac{P_n(-1)\cos\theta}{\mu_n}
 -\frac{P_n'(-1)\sin\theta}{\mu_n^2}+o(\mu_n^{-2}).
\]
The substitution $t=\mu_n(1+x)$ and Taylor's formula at $x=-1$ give
\[
 \int_{-1}^0P_n(x)q(-x)G_1(\mu_n(1+x))\,dx
 =\frac{P_n(-1)M_1}{\mu_n}
 +\frac{P_n'(-1)K_1\log\mu_n}{\mu_n^2}+O(\mu_n^{-2})
\]
when $\alpha=1$, and
\[
 \int_{-1}^0P_n(x)q(-x)G_\alpha(\mu_n(1+x))\,dx
 =\frac{P_n(-1)M_\alpha}{\mu_n}
 +\frac{P_n'(-1)M_{1,\alpha}}{\mu_n^2}+o_\alpha(\mu_n^{-2})
\]
when $1<\alpha<2$.  Indeed, the quadratic Taylor remainder and the part
$2\mu_n/3<t<\mu_n$ are bounded by
\[
 C\mu_n^{-3}\int_0^{2\mu_n/3}t^2G_\alpha(t)\,dt
 +\frac C\mu_n\int_{2\mu_n/3}^\infty G_\alpha(t)\,dt
 +\frac C{\mu_n^2}\int_{2\mu_n/3}^{\mu_n}tG_\alpha(t)\,dt,
\]
which is $O(\mu_n^{-2})$ for $\alpha=1$ and $o_\alpha(\mu_n^{-2})$ for
$1<\alpha<2$.

For $r\in W^{1,1}(-1,0)$,
\[
 \left|\int_{-1}^0r(x)S_n(x)\,dx\right|
 \le\frac C\mu_n\norm[W^{1,1}(-1,0)]{r},
 \qquad
 \left|\int_{-1}^0r(x)q(-x)G_\alpha(\mu_n(1+x))\,dx\right|
 \le\frac{M_\alpha}{\mu_n}\norm[L^\infty(-1,0)]{r}.
\]
Hence the remainder terms in the two residual expansions contribute $O(\mu_n^{-3})$ at $\alpha=1$ and
$o_\alpha(\mu_n^{-3})$ for $1<\alpha<2$.  Moreover, by \eqref{eq:zBV} and the
tail estimate for $G_\alpha$,
\[
 \left|\int_{-1}^0z_n(x)q(x)G_\alpha(\mu_n(1-x))\,dx\right|
 \le C_\alpha\mu_n^{-2-\alpha}.
\]

Suppose first that $\alpha=1$.  The function
\[
 \frac{2K_1}{\pi}\bigl(q(x)(1+x)^{-3}+(-1)^{n+1} q(-x)(1-x)^{-3}\bigr)
\]
has value $(-1)^{n+1} K_1/(4\pi)$ at $x=-1$, while
\[
 R_{1,n}(-1)=R_{1,n}'(-1)=-\frac{(-1)^{n+1}K_1}{4},
 \qquad \cos\theta-M_1=\frac1{\sqrt2}.
\]
Substituting the three preceding pairing identities into \eqref{eq:qmode-decomp}, we obtain
\[
 \begin{aligned}
 \int_{-1}^0z_n(x)\wt\phi_n(x)\,dx
 &=R_{1,n}(-1)(\cos\theta-M_1)\mu_n^{-2}\\
 &\quad+\left[-R_{1,n}'(-1)K_1
 +\frac{(-1)^{n+1} K_1}{4\pi}(\cos\theta-M_1)\right]
 \mu_n^{-3}\log\mu_n+O(\mu_n^{-3})\\
 &=-\frac{(-1)^{n+1}\kappa_1}{2}\mu_n^{-2}
 +\frac{(-1)^{n+1}}{4\pi^2}\mu_n^{-3}\log\mu_n+O(\mu_n^{-3}).
 \end{aligned}
\]
Since $z_n(-x)\wt\phi_n(-x)=z_n(x)\wt\phi_n(x)$,
\[
 \ip{z_n}{\wt\phi_n}
 =-(-1)^{n+1}\kappa_1\mu_n^{-2}
 +\frac{(-1)^{n+1}}{2\pi^2}\mu_n^{-3}\log\mu_n+O(\mu_n^{-3}).
\]

Now let $1<\alpha<2$.  The function
\[
 -K_{2,\alpha}\bigl(q(x)(1+x)^{-2-\alpha}
 +(-1)^{n+1} q(-x)(1-x)^{-2-\alpha}\bigr)
\]
has value $-(-1)^{n+1} K_{2,\alpha}2^{-2-\alpha}$ at $x=-1$, and
\[
 R_{\alpha,n}(-1)=-(-1)^{n+1} K_\alpha2^{-1-\alpha},
 \qquad
 R_{\alpha,n}'(-1)=-(-1)^{n+1}(1+\alpha)K_\alpha2^{-2-\alpha}.
\]
Therefore
\[
 \begin{aligned}
 \int_{-1}^0z_n(x)\wt\phi_n(x)\,dx
 ={}&R_{\alpha,n}(-1)(\cos\theta-M_\alpha)\mu_n^{-2}+\bigl[-R_{\alpha,n}'(-1)(\sin\theta+M_{1,\alpha})\\
 &\qquad-(-1)^{n+1}K_{2,\alpha}2^{-2-\alpha}
 (\cos\theta-M_\alpha)\bigr]\mu_n^{-3}+o_\alpha(\mu_n^{-3}).
 \end{aligned}
\]
Using
\[
 \cos\theta-M_\alpha=\sqrt{\alpha/2},
 \qquad
 \sin\theta+M_{1,\alpha}=-\sqrt{\alpha/2}\cot\frac\pi\alpha,
 \qquad
 K_{2,\alpha}=(1+\alpha)K_\alpha\cot\frac\pi\alpha,
\]
and doubling the half-interval integral, we obtain
\[
 \ip{z_n}{\wt\phi_n}
 =-(-1)^{n+1}\kappa_\alpha\mu_n^{-2}
 -(-1)^{n+1}\eta_\alpha\mu_n^{-3}+o_\alpha(\mu_n^{-3}).
\]

Finally,
\[
 \frac{\calE_{\alpha,I}[\wt\phi_n]}{\norm[L^2(I)]{\wt\phi_n}^2}
 -\mu_n^\alpha
 =\frac{\ip{z_n}{\wt\phi_n}}{\norm[L^2(I)]{\wt\phi_n}^2}.
\]
By \eqref{eq:qnorm-sharp},
$\norm[L^2(I)]{\wt\phi_n}^{-2}=1+o_\alpha(\mu_n^{-1})$.  The
Rayleigh quotient differs from $\lambda_n$ by $O(\mu_n^{-3})$ when
$\alpha=1$ and by $o_\alpha(\mu_n^{-3})$ when $1<\alpha<2$, by
\eqref{eq:ordinary-transfer}.  Since $-(-1)^{n+1}=(-1)^n$, the two pairing
formulas prove \eqref{eq:main-critical} and \eqref{eq:main-large}.
\end{proof}

\begin{remark}
The proof above does not apply when $0<\alpha<1$.  In that range,
$\int_0^{\mu_n}tG_\alpha(t)\,dt$ is of order $\mu_n^{1-\alpha}$, so the linear
Taylor term and the quadratic Taylor remainder contribute at the same order
to $z_n$.  In addition, the error in \eqref{eq:ordinary-transfer} is of order
$\mu_n^{-2-\alpha}$, which is the order of 
\eqref{eq:main-small}.
\end{remark}

\section{The range \texorpdfstring{$0<\alpha<1$}{0 < alpha < 1}}
\label{sec:small}

This section proves \eqref{eq:main-small}. To refine the strategy in Section \ref{sec:glued}, we use an uncut combination of
the two half-line eigenfunctions, and obtain the sharper error \eqref{eq:filtered} using its Rayleigh quotient.

Fix $0<\alpha<1$.  For each $n$, define on $(0,2\mu_n)$
\begin{equation}\label{eq:V}
 V_n(t)=\sin(t+\theta)-G_\alpha(t)
 -(-1)^{n+1}G_\alpha(2\mu_n-t).
\end{equation}
Since $2\mu_n+2\theta=n\pi$,
\begin{equation}\label{eq:V-reflection}
 V_n(2\mu_n-t)=(-1)^{n+1}V_n(t).
\end{equation}
The zero extension of $V_n$ is bounded, compactly supported, and of bounded
variation.  If $u$ has these properties, then, for $|h|\le1$,
\[
 \norm[L^2(\R)]{u(\,\cdot+h)-u}^2
 \le2\norm[L^\infty(\R)]{u}\,
      \norm[L^1(\R)]{u(\,\cdot+h)-u}
 \le2\norm[L^\infty(\R)]{u}\Var_\R(u)|h|.
\]
For $|h|>1$, the left-hand side is at most $4\|u\|_2^2$.  Hence
\eqref{eq:translation-form} and $\int_0^1h^{-\alpha}\,dh<\infty$ imply
$V_n\in H^{\alpha/2}(\R)$.

For $0<\rho<2\mu_n$, define
\[
 H_n(\rho)=\int_0^\infty
 \frac{G_\alpha(2\mu_n+u)}{(u+\rho)^{1+\alpha}}\,du,
 \qquad
 D_n=\calE_{\alpha,(0,2\mu_n)}[V_n]
     -\norm[L^2(0,2\mu_n)]{V_n}^2.
\]

\begin{lemma}\label{lem:exact-form-identity}
For every $\psi\in\wt H^{\alpha/2}(0,2\mu_n)$,
\begin{equation}\label{formid}
 \begin{aligned}
 \calE_{\alpha,(0,2\mu_n)}(V_n,\psi)-\ip{V_n}{\psi}=-c_\alpha\int_0^{2\mu_n}
 \bigl(H_n(2\mu_n-t)+(-1)^{n+1}H_n(t)\bigr)\psi(t)\,dt.
 \end{aligned}
\end{equation}
Consequently,
\begin{equation}\label{eq:Dexact}
\begin{aligned}
 D_n&=-2c_\alpha(-1)^{n+1}\int_0^{2\mu_n}
 F_\alpha(\rho)H_n(\rho)\,d\rho+2c_\alpha\int_0^{2\mu_n}
 G_\alpha(2\mu_n-\rho)H_n(\rho)\,d\rho,\\
 \norm[L^2(0,2\mu_n)]{V_n}^2&=\mu_n+O_\alpha(1).
\end{aligned}
\end{equation}
\end{lemma}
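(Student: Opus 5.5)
The plan is to exhibit $V_n$ as a combination of functions whose images under $\mathcal A-1$ are known exactly. Write $F^{(1)}(t)=F_\alpha(t)$ (zero for $t\le0$), $F^{(2)}(t)=F_\alpha(2\mu_n-t)$ (zero for $t\ge2\mu_n$), and $s(t)=\sin(t+\theta)$ on all of $\R$. Since $2\mu_n+2\theta=n\pi$, we have $(-1)^{n+1}\sin(2\mu_n-t+\theta)=\sin(t+\theta)$. Comparing the three regions $t<0$, $0<t<2\mu_n$ and $t>2\mu_n$ then gives, a.e. on $\R$,
\[
 V_n=F^{(1)}+(-1)^{n+1}F^{(2)}-s
 +G_\alpha(t)\one_{(2\mu_n,\infty)}(t)
 +(-1)^{n+1}G_\alpha(2\mu_n-t)\one_{(-\infty,0)}(t).
\]
By \eqref{half} with $\tau=1$ and reflection, $(\mathcal A-1)F^{(1)}=0$ on $(0,\infty)$ and $(\mathcal A-1)F^{(2)}=0$ on $(-\infty,2\mu_n)$. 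Also $(\mathcal A-1)s=0$, because the symbol $\abs\xi^\alpha$ equals $1$ at $\xi=\pm1$. Hence on $(0,2\mu_n)$ only the two tail pieces contribute. Their supports are disjoint from $(0,2\mu_n)$, so $\mathcal A$ acts on them by the kernel formula of \cref{lem:operator-bound}, and the identity operator kills them there.

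For the first tail piece, put $y=2\mu_n+u$. Then $y-t=u+(2\mu_n-t)$, and the piece contributes $-c_\alpha H_n(2\mu_n-t)$. For the second tail piece, put $y=-u$. Then $t-y=t+u$ and $G_\alpha(2\mu_n-y)=G_\alpha(2\mu_n+u)$, so it contributes $-c_\alpha(-1)^{n+1}H_n(t)$. This is the right-hand side of \eqref{formid}.

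To make this rigorous, I would first prove \eqref{formid} for $\psi\in C_c^\infty((0,2\mu_n))$, and then extend by density.
\begin{itemize}
\item Since $V_n\in H^{\alpha/2}(\R)$, we have $\calE(V_n,\psi)=\ip{V_n}{\mathcal A\psi}_{L^2(\R)}$.
\item The function $\mathcal A\psi$ is smooth and $O(\abs t^{-1-\alpha})$ at infinity. Therefore each pairing with the bounded, non-$L^2$ pieces $F^{(1)},F^{(2)},s$ converges absolutely, and the sum can be split term by term.
\item The weak identities $\ip{s}{\mathcal A\psi}=\ip{s}{\psi}$ and $\ip{F^{(j)}}{\mathcal A\psi}=\ip{F^{(j)}}{\psi}$ hold. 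The first follows by Fourier transform. The second is the sense in which \eqref{half} is established in \cite{KwasnickiHalfLine}; it can also be checked by a truncation argument. This step needs the most care.
\item The tail pieces are handled by Fubini.
\end{itemize}
To pass from $C_c^\infty$ to general $\psi\in\wt H^{\alpha/2}(0,2\mu_n)$, both sides must be continuous in the form norm. The left side clearly is. For the right side, I use $H_n(\rho)\le C\rho^{-\alpha}$, together with the same bound at the other endpoint, and $0<\alpha<1$. Cauchy--Schwarz gives
\[
\int_0^{2\mu_n}\rho^{-\alpha}\abs\psi\,d\rho
\le\Bigl(\int_0^{2\mu_n}\rho^{-\alpha}\,d\rho\Bigr)^{1/2}\Bigl(\int_0^{2\mu_n}\rho^{-\alpha}\psi^2\,d\rho\Bigr)^{1/2},
\]
and the fractional Hardy inequality bounds the last factor by the $H^{\alpha/2}$ norm. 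I expect this continuity step, together with justifying the weak eigen-identity for the non-decaying $F_\alpha$, to be the main technical obstacle.

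For \eqref{eq:Dexact}, take $\psi=V_n$. The substitution $t\mapsto2\mu_n-t$ and \eqref{eq:V-reflection} turn $\int H_n(2\mu_n-t)V_n(t)\,dt$ into $(-1)^{n+1}\int H_nV_n$. Hence
\[
D_n=-2c_\alpha(-1)^{n+1}\int_0^{2\mu_n}H_nV_n\,d\rho.
\]
On $(0,2\mu_n)$ we have $V_n(\rho)=F_\alpha(\rho)-(-1)^{n+1}G_\alpha(2\mu_n-\rho)$, which gives the stated formula for $D_n$. Finally, $\int_0^{2\mu_n}\sin^2(t+\theta)\,dt=\mu_n+O(1)$. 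The $G_\alpha$ terms are bounded and have $L^1$ norm at most $2M_\alpha$ by \eqref{eq:Gmass}, so they change $\norm[L^2]{V_n}^2$ only by $O_\alpha(1)$.
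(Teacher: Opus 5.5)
Your proposal is correct and follows the paper's proof essentially step for step: the same decomposition of $V_n$ into $F_\alpha(t)+(-1)^{n+1}F_\alpha(2\mu_n-t)-\sin(t+\theta)$ plus the two exterior $G_\alpha$ tails, the same Fubini computation of the tail terms for smooth $\psi$ followed by density, and the same reflection argument for $D_n$ and for the norm estimate. The only deviation is in the continuity step. There you use Cauchy--Schwarz with the weight $\rho^{-\alpha}$ and the fractional Hardy inequality (valid since $\alpha<1$), whereas the paper pairs $H_n(2\mu_n-\cdot)+(-1)^{n+1}H_n\in L^{2/(1+\alpha)}$ with the Sobolev embedding $\wt H^{\alpha/2}\hookrightarrow L^{2/(1-\alpha)}$ via H\"older; both work equally well.
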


\begin{proof}
Use the same notation for the zero extensions of both copies of $F_\alpha$,
and set
\[
 W_n(t)=F_\alpha(t)+(-1)^{n+1}F_\alpha(2\mu_n-t)-\sin(t+\theta).
\]
On $\R$,
\[
 V_n=W_n
 +(-1)^{n+1}\one_{(-\infty,0)}G_\alpha(2\mu_n-\,\cdot)
 +\one_{(2\mu_n,\infty)}G_\alpha.
\]
For $\psi\in C_c^\infty(0,2\mu_n)$, the half-line eigenvalue equation
\eqref{half}, translation and reflection invariance of $\mathcal A$, and
\[
 \mathcal A\sin(\,\cdot+\theta)=\sin(\,\cdot+\theta)
\]
give $\langle(\mathcal A-1)W_n,\psi\rangle=0$.  Fubini's theorem gives,
for $0<t<2\mu_n$,
\[
 \mathcal A(\one_{(2\mu_n,\infty)}G_\alpha)(t)
 =-c_\alpha H_n(2\mu_n-t),
\]
\[
 \mathcal A\bigl((-1)^{n+1}\one_{(-\infty,0)}
 G_\alpha(2\mu_n-\,\cdot)\bigr)(t)
 =-c_\alpha(-1)^{n+1}H_n(t).
\]
This proves \eqref{formid} for smooth $\psi$.  Since $\alpha<1$, the bound
$H_n(\rho)\le C_\alpha\rho^{-\alpha}$ implies
\[
 H_n(2\mu_n-\cdot)+(-1)^{n+1}H_n
 \in L^{2/(1+\alpha)}(0,2\mu_n).
\]
Moreover, \cite[Theorem~6.5]{DNPV} gives
\[
 \wt H^{\alpha/2}(0,2\mu_n)
 \hookrightarrow L^{2/(1-\alpha)}(0,2\mu_n).
\]
H\"older's inequality therefore makes the right-hand side of \eqref{formid}
continuous on $\wt H^{\alpha/2}(0,2\mu_n)$.  Since
$C_c^\infty(0,2\mu_n)$ is dense in $\wt H^{\alpha/2}(0,2\mu_n)$ by
\cite[Theorem~3.29(ii)]{McLean}, \eqref{formid} follows for every admissible
$\psi$.

Taking $\psi=V_n$, changing $t$ to $2\mu_n-t$ in the first integral, and
using \eqref{eq:V-reflection} give
\[
 D_n=-2c_\alpha(-1)^{n+1}\int_0^{2\mu_n}V_n(t)H_n(t)\,dt.
\]
Since
\[
 V_n(t)=F_\alpha(t)-(-1)^{n+1}G_\alpha(2\mu_n-t),
\]
this is the first identity in \eqref{eq:Dexact}.  Finally,
$G_\alpha\in L^1\cap L^2$ and $|\sin(t+\theta)|\le1$ imply
\[
 \left|\norm[L^2(0,2\mu_n)]{V_n}^2
 -\norm[L^2(0,2\mu_n)]{\sin(\,\cdot+\theta)}^2\right|
 \le C_\alpha.
\]
The last squared norm equals $\mu_n+O_\alpha(1)$.
\end{proof}

Note that the rescaling $t=\mu_n(1+x)$ maps $I=(-1,1)$ onto
$(0,2\mu_n)$. Define
\[
 w_n(x)=V_n(\mu_n(1+x)),\qquad x\in I.
\]
Then
\begin{equation}\label{eq:w}
 w_n(x)=S_n(x)-G_\alpha(\mu_n(1+x))
 -(-1)^{n+1}G_\alpha(\mu_n(1-x)),
\end{equation}
where $S_n$ is defined in \eqref{eq:qmode-decomp}, and
\begin{equation}\label{eq:scaling}
 \norm[L^2(I)]{w_n}^2=\mu_n^{-1}
 \norm[L^2(0,2\mu_n)]{V_n}^2,
 \qquad
 \calE_{\alpha,I}[w_n]=\mu_n^{\alpha-1}
 \calE_{\alpha,(0,2\mu_n)}[V_n].
\end{equation}
Thus $w_n$ is obtained from $\wt\phi_n$ by removing the cutoffs from the two
$G_\alpha$-terms.  This permits the use of the exact quadratic-form identity
\eqref{eq:Dexact}.

\begin{proposition}\label{prop:filtered}
For $0<\alpha<1$,
\begin{equation}\label{eq:filtered}
 \frac{\calE_{\alpha,I}[w_n]}{\norm[L^2(I)]{w_n}^2}
 -\lambda_n=o_\alpha(\mu_n^{-2-\alpha}).
\end{equation}
\end{proposition}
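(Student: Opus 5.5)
We follow the proof of \eqref{eq:ordinary-transfer} in \Cref{prop:qmode}, with $w_n$ in place of $\wt\phi_n$, and track the residual more precisely. Since $w_n\in\wt H^{\alpha/2}(I)$, expanding in the eigenbasis gives
\[
 \frac{\calE_{\alpha,I}[w_n]}{\norm[L^2(I)]{w_n}^2}-\lambda_n
 =\frac{\sum_{k\ne n}(\lambda_k-\lambda_n)|\ip{w_n}{\varphi_k}|^2}{\norm[L^2(I)]{w_n}^2}.
\]
So it suffices to show that this sum is $o_\alpha(\mu_n^{-2-\alpha})$.

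\textbf{Step 1: residual of $w_n$.} By \Cref{lem:exact-form-identity} and the scaling \eqref{eq:scaling}, for every $\psi\in\wt H^{\alpha/2}(I)$ we have $\calE_{\alpha,I}(w_n,\psi)-\mu_n^\alpha\ip{w_n}{\psi}=\ip{r_n}{\psi}$. The residual is
\[
 r_n(x)=-c_\alpha\mu_n^{\alpha}\bigl(H_n(\mu_n(1-x))+(-1)^{n+1}H_n(\mu_n(1+x))\bigr).
\]
Testing with $\psi=\varphi_k$ gives
\[
 (\lambda_k-\mu_n^\alpha)\ip{w_n}{\varphi_k}=\ip{r_n}{\varphi_k}.
\]
The key gain over $z_n$ is the size of $r_n$. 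Since $G_\alpha(2\mu_n+u)\le C(\mu_n+u)^{-1-\alpha}$, we get $H_n(\rho)\le C_\alpha\mu_n^{-1-\alpha}\rho^{-\alpha}$, and hence
\[
 |r_n(x)|\le C_\alpha\mu_n^{-1-2\alpha}\bigl((1-x)^{-\alpha}+(1+x)^{-\alpha}\bigr).
\]
Because $\alpha<1$, this gives $\norm[L^1(I)]{r_n}\le C_\alpha\mu_n^{-1-2\alpha}$. Squaring makes the singularity $(1\mp x)^{-2\alpha}$, which is integrable only for $\alpha<1/2$. For $1/2\le\alpha<1$ we therefore avoid $L^2$ and work in $L^{2/(1+\alpha)}$, paired against the $\wt H^{\alpha/2}$-control of the eigenfunction coefficients. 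The derivative satisfies $|r_n'|\le C\mu_n^{-1-2\alpha}(1\mp x)^{-1-\alpha}$, which is not integrable. Instead we use that $H_n$ is monotone, so $r_n$ is a sum of two monotone pieces.

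\textbf{Step 2: coefficient bounds.} Write $\ip{r_n}{\varphi_k}=\int_I r_n\,\varphi_k$ and integrate by parts with the primitive $\Phi_k(x)=\int_{-1}^x\varphi_k$. By \eqref{eq:Pk-delta} and $\delta_k\le C_\alpha k^{-1}$, we have $\sup|\Phi_k|\le C_\alpha k^{-1}$. The endpoint singularity of $r_n$ must be handled separately. On a boundary layer of width $\varepsilon$ at each endpoint, bound $\varphi_k$ using $|\varphi_k|\le C(k(1\pm x))^{\alpha/2}$ near the boundary, a consequence of the quasimode structure and \eqref{eq:F-zero}. On the rest, integrate by parts using the monotonicity of $r_n$. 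Optimizing over $\varepsilon$ should give a bound of the form $|\ip{r_n}{\varphi_k}|\le C_\alpha\mu_n^{-1-2\alpha}k^{-1+\beta}$ for some small $\beta<\alpha$. The cruder bound $\norm[L^1]{r_n}\sup|\varphi_k|$ is not summable enough, which is why this refinement is needed.

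\textbf{Step 3: summation.} Insert these bounds into the sum from the opening display, using $|\lambda_k-\lambda_n|\le C|\lambda_k-\mu_n^\alpha|$ and the gaps \eqref{eq:gaps}. In the near range $n/2\le k\le 2n$, the contribution is
\[
 \sum_{\substack{n/2\le k\le 2n\\ k\ne n}}\frac{|\ip{r_n}{\varphi_k}|^2}{\mu_n^{\alpha-1}|k-n|}\lesssim \mu_n^{-2-4\alpha}\mu_n^{1-\alpha}\mu_n^{-2+2\beta}\log\mu_n=o(\mu_n^{-2-\alpha}).
\]
The far range is controlled by $\mu_n^{-\alpha}\sum_k|\ip{r_n}{\varphi_k}|^2$ for $k$ up to about $n$. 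For $k\gg n$ we use the decay of $\ip{r_n}{\varphi_k}/\lambda_k$ together with $\lambda_k\ge((k-1)\pi/2)^\alpha$ from \eqref{eq:sandwich}. The resulting weighted sum is $\sum_k \lambda_k^{-1}|\ip{r_n}{\varphi_k}|^2\le C\norm[\wt H^{-\alpha/2}]{r_n}^2$. This dual norm is controlled via the embedding $L^{2/(1+\alpha)}\hookrightarrow \wt H^{-\alpha/2}$ (dual to \cite[Theorem~6.5]{DNPV}) and gives $O(\mu_n^{-2-4\alpha}\cdot\mu_n^{\alpha})$, which is again $o(\mu_n^{-2-\alpha})$.

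The main obstacle is Step 2: obtaining a coefficient bound for $\ip{r_n}{\varphi_k}$ that decays in $k$ despite the non-$W^{1,1}$ endpoint singularity of $r_n$. If the boundary-layer argument proves too delicate, the fallback is to skip fine $k$-decay and rely on the $\mu_n^{-1-2\alpha}$ smallness alone. This costs at most powers $\mu_n^{1-\alpha}$ from the gaps and still leaves the margin $\mu_n^{-\alpha}$ in the exponent bookkeeping.
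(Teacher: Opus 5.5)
Your overall frame, expanding $w_n$ in the eigenbasis and using the exact residual $r_n$ from \cref{lem:exact-form-identity}, is a legitimate alternative to the paper's. However, the decisive estimate is wrong as stated. That estimate is the coefficient bound of Step~2 in the resonant window, and your fallback does not replace it.

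First, you dropped the prefactor $\mu_n^{\alpha}$ of $r_n$. With $\rho=\mu_n(1\mp x)$, the bound $H_n(\rho)\le C_\alpha\mu_n^{-1-\alpha}\rho^{-\alpha}$ gives $|r_n(x)|\le C_\alpha\mu_n^{-1-\alpha}\bigl((1-x)^{-\alpha}+(1+x)^{-\alpha}\bigr)$, not $\mu_n^{-1-2\alpha}$.

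Second, no bound of the form $C_\alpha\mu_n^{-1-\alpha}k^{-1+\beta}$ with $\beta<\alpha$ can hold, let alone one with $\mu_n^{-1-2\alpha}$. At $k=n$ we have $\ip{r_n}{\varphi_n}=(\lambda_n-\mu_n^\alpha)\ip{w_n}{\varphi_n}$. By \eqref{eq:third} and $\ip{w_n}{\varphi_n}\to1$ (with the sign of $\varphi_n$ fixed in \cref{prop:qmode}), this has modulus $\sim\kappa_\alpha\mu_n^{-2}$. More to the point, with $d_n=w_n-\wt\phi_n$ one has
\[
 \ip{r_n}{\varphi_k}=\ip{z_n}{\varphi_k}+(\lambda_k-\mu_n^\alpha)\ip{d_n}{\varphi_k}.
\]
So for $n/2\le k\le2n$ the coefficients of $r_n$ coincide with those of $z_n$ up to $O_\alpha(\mu_n^{-3}(1+|k-n|))$. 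Thus $w_n$ gains nothing over $\wt\phi_n$ near resonance; its advantage lies only at low frequencies.

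Your fallback $|\ip{r_n}{\varphi_k}|\le\norm[L^1(I)]{r_n}\norm[L^\infty(I)]{\varphi_k}\le C_\alpha\mu_n^{-1-\alpha}$ makes the window contribute about $\mu_n^{-1-3\alpha}\log\mu_n$. This is not $o_\alpha(\mu_n^{-2-\alpha})$ when $\alpha\le1/2$. Also, the pointwise boundary bound $|\varphi_k|\le C(k(1\pm x))^{\alpha/2}$ does not follow from the $L^2$ approximation \eqref{eq:qmode-L2close}.

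The gap can be closed inside your framework.
\begin{itemize}
\item \emph{Resonant window.} Combine the identity above with \eqref{eq:zk}, the bound $\norm[W^{1,\infty}(I)]{d_n}\le C_\alpha\mu_n^{-1-\alpha}$ from \eqref{dnlim}, integration by parts against $\Phi_k$ (whose supremum is $P_k\le C_\alpha k^{-1}$), and $|\lambda_k-\mu_n^\alpha|\le C_\alpha\mu_n^{\alpha}$. This gives $|\ip{r_n}{\varphi_k}|\le C_\alpha\mu_n^{-2}$. With the first line of \eqref{eq:gaps}, the window then contributes $O_\alpha(\mu_n^{-3-\alpha}\log\mu_n)$.
\item \emph{Far ranges.} For $k<n/2$ and $k>2n$, \eqref{eq:sandwich} and \eqref{eq:gaps} give $|\lambda_k-\mu_n^\alpha|\ge c_\alpha\lambda_k$. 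Your dual-norm idea, with the correct size of $r_n$, gives $\sum_k\lambda_k^{-1}\abs{\ip{r_n}{\varphi_k}}^2\le C_\alpha\norm[L^{2/(1+\alpha)}(I)]{r_n}^2\le C_\alpha\mu_n^{-2-2\alpha}$. Hence both far ranges are $O_\alpha(\mu_n^{-2-2\alpha})$.
\end{itemize}

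Repaired this way, your route differs from the paper's. The paper never analyzes $r_n$. Instead it shows that $w_n$ and the filtered vector $\psi_n=\wt\phi_n+(\mathcal A_I+\mu_n^\alpha)^{-1}z_n$ carry the same defect $\mu_n^{-2-\alpha}\norm[L^2(I)]{R_{\alpha,n}}^2$ relative to $\wt\phi_n$, and then controls $\psi_n$ by \eqref{eq:zk}. Your version is more direct: it pins the gain of $w_n$ on the smallness of its residual in the dual form norm, $\mu_n^{-1-\alpha}$ versus $\mu_n^{-1}$ for $z_n$.
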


\begin{proof}
Put $d_n=w_n-\wt\phi_n$.  By \eqref{eq:qmode-decomp}, \eqref{eq:w}, and
$q(x)+q(-x)=1$,
\[
 d_n(x)=-q(x)G_\alpha(\mu_n(1+x))
 -(-1)^{n+1}q(-x)G_\alpha(\mu_n(1-x)).
\]
If $q(x)\ne0$, then $1+x\ge2/3$.  If $q(-x)\ne0$, then
$1-x\ge2/3$.  Since $1\pm x\le2$ on $I$, \eqref{eq:Gsmall} and its
differentiated form give
\[
 \sup_{2/3\le s\le2}
 \left|\mu_n^{1+\alpha}G_\alpha(\mu_ns)
 -K_\alpha s^{-1-\alpha}\right|\longrightarrow0
\]
and
\[
 \sup_{2/3\le s\le2}
 \left|\mu_n^{2+\alpha}G_\alpha'(\mu_ns)
 +(1+\alpha)K_\alpha s^{-2-\alpha}\right|\longrightarrow0.
\]
The product rule and \eqref{eq:R-alpha-n} therefore give
\begin{equation}\label{dnlim}
 \norm[W^{1,\infty}(I)]{\mu_n^{1+\alpha}d_n-R_{\alpha,n}}
 =o_\alpha(1).
\end{equation}

Use the same notation for the zero extension of $\mu_n^{1+\alpha}d_n$.
Equation \eqref{dnlim}, the identity $R_{\alpha,n+2}=R_{\alpha,n}$, and an
enlargement of $C_\alpha$ for finitely many indices give
\[
 \norm[L^\infty(\R)]{\mu_n^{1+\alpha}d_n}
 +\norm[L^2(\R)]{\mu_n^{1+\alpha}d_n}
 +\Var_\R(\mu_n^{1+\alpha}d_n)\le C_\alpha.
\]
Indeed,
\[
 \begin{aligned}
 \Var_\R(\mu_n^{1+\alpha}d_n)
 &\le\int_{-1}^1
 \abs{\partial_x(\mu_n^{1+\alpha}d_n)}\,dx\\
 &\quad+\abs{\mu_n^{1+\alpha}d_n(-1+)}
 +\abs{\mu_n^{1+\alpha}d_n(1-)}.
 \end{aligned}
\]
For $0<h\le1$, the bounded-variation estimate preceding
\cref{lem:exact-form-identity} gives
\[
 \begin{aligned}
 \norm[L^2(\R)]{
 \mu_n^{1+\alpha}d_n(\,\cdot+h)-\mu_n^{1+\alpha}d_n}^2\le2h\norm[L^\infty(\R)]{\mu_n^{1+\alpha}d_n}
 \Var_\R(\mu_n^{1+\alpha}d_n).
 \end{aligned}
\]
For $h>1$, the left-hand side is at most
$4\norm[L^2(\R)]{\mu_n^{1+\alpha}d_n}^2$.  Hence
\eqref{eq:translation-form} gives
\[
 \begin{aligned}
 \calE_{\alpha,I}[\mu_n^{1+\alpha}d_n]
 \le \frac{2c_\alpha}{1-\alpha}
 \norm[L^\infty(\R)]{\mu_n^{1+\alpha}d_n}
 \Var_\R(\mu_n^{1+\alpha}d_n)+\frac{4c_\alpha}{\alpha}
 \norm[L^2(\R)]{\mu_n^{1+\alpha}d_n}^2
 \le C_\alpha.
 \end{aligned}
\]
By quadratic homogeneity,
\[
 \sup_n\calE_{\alpha,I}[\mu_n^{1+\alpha}d_n]<\infty,
 \qquad
 \calE_{\alpha,I}[d_n]=O_\alpha(\mu_n^{-2-2\alpha}).
\]

Since $\wt\phi_n\in\Dom(\mathcal A_I)$ and $d_n$ belongs to the form domain,
\[
 \calE_{\alpha,I}(\wt\phi_n,d_n)
 -\mu_n^\alpha\ip{\wt\phi_n}{d_n}
 =\ip{(\mathcal A_I-\mu_n^\alpha)\wt\phi_n}{d_n}
 =\ip{z_n}{d_n}.
\]
Thus
\[
 \begin{aligned}
 &\bigl(\calE_{\alpha,I}[w_n]
 -\mu_n^\alpha\norm[L^2]{w_n}^2\bigr)
 -\bigl(\calE_{\alpha,I}[\wt\phi_n]
 -\mu_n^\alpha\norm[L^2]{\wt\phi_n}^2\bigr)\\
 &\qquad=2\ip{z_n}{d_n}+\calE_{\alpha,I}[d_n]
 -\mu_n^\alpha\norm[L^2]{d_n}^2.
 \end{aligned}
\]
Equations \eqref{eq:zlimit} and \eqref{dnlim} give, in $L^2(I)$,
\[
 z_n=\mu_n^{-1}R_{\alpha,n}+o_\alpha(\mu_n^{-1}),
 \qquad
 d_n=\mu_n^{-1-\alpha}R_{\alpha,n}
 +o_\alpha(\mu_n^{-1-\alpha}).
\]
Consequently,
\[
 \begin{aligned}
 2\ip{z_n}{d_n}
 &=2\mu_n^{-2-\alpha}\norm[L^2(I)]{R_{\alpha,n}}^2
 +o_\alpha(\mu_n^{-2-\alpha}),\\
 \mu_n^\alpha\norm[L^2]{d_n}^2
 &=\mu_n^{-2-\alpha}\norm[L^2(I)]{R_{\alpha,n}}^2
 +o_\alpha(\mu_n^{-2-\alpha}),\\
 \calE_{\alpha,I}[d_n]
 &=O_\alpha(\mu_n^{-2-2\alpha})
 =o_\alpha(\mu_n^{-2-\alpha}).
 \end{aligned}
\]
Therefore
\begin{equation}\label{eq:w-qmode-comparison}
 \begin{aligned}
 &\bigl(\calE_{\alpha,I}[w_n]
 -\mu_n^\alpha\norm[L^2]{w_n}^2\bigr)
 -\bigl(\calE_{\alpha,I}[\wt\phi_n]
 -\mu_n^\alpha\norm[L^2]{\wt\phi_n}^2\bigr)\\
 &\qquad=\mu_n^{-2-\alpha}\norm[L^2(I)]{R_{\alpha,n}}^2
 +o_\alpha(\mu_n^{-2-\alpha}).
 \end{aligned}
\end{equation}

Define
\begin{equation}\label{eq:filter-vector}
 \psi_n=\wt\phi_n+(\mathcal A_I+\mu_n^\alpha)^{-1}z_n.
\end{equation}
For every $k\ge1$,
\[
 \ip{z_n}{\varphi_k}
 =(\lambda_k-\mu_n^\alpha)\ip{\wt\phi_n}{\varphi_k},
 \qquad
 \ip{\psi_n}{\varphi_k}
 =\frac{2\lambda_k}{\lambda_k+\mu_n^\alpha}
 \ip{\wt\phi_n}{\varphi_k}.
\]
Moreover,
\[
 \begin{aligned}
 &(\lambda_k-\mu_n^\alpha)
 \left(\frac{4\lambda_k^2}{(\lambda_k+\mu_n^\alpha)^2}-1\right)
 \abs{\ip{\wt\phi_n}{\varphi_k}}^2\\
 &\qquad=\frac{3\lambda_k+\mu_n^\alpha}
 {(\lambda_k+\mu_n^\alpha)^2}\abs{\ip{z_n}{\varphi_k}}^2,
 \end{aligned}
\]
and
\[
 \frac{3\lambda_k+\mu_n^\alpha}{(\lambda_k+\mu_n^\alpha)^2}
 =\mu_n^{-\alpha}
 +\frac{\lambda_k(\mu_n^\alpha-\lambda_k)}
 {\mu_n^\alpha(\lambda_k+\mu_n^\alpha)^2}.
\]
Finite spectral projections followed by Parseval therefore give
\[
 \begin{aligned}
 &\bigl(\calE_{\alpha,I}[\psi_n]
 -\mu_n^\alpha\norm[L^2]{\psi_n}^2\bigr)
 -\bigl(\calE_{\alpha,I}[\wt\phi_n]
 -\mu_n^\alpha\norm[L^2]{\wt\phi_n}^2\bigr)\\
 &\quad=\mu_n^{-\alpha}\norm[L^2]{z_n}^2
 +\sum_{k\ge1}
 \frac{\lambda_k(\mu_n^\alpha-\lambda_k)}
 {\mu_n^\alpha(\lambda_k+\mu_n^\alpha)^2}
 \abs{\ip{z_n}{\varphi_k}}^2.
 \end{aligned}
\]
For $k\le2n$,
\[
 \left|\frac{\lambda_k(\mu_n^\alpha-\lambda_k)}
 {\mu_n^\alpha(\lambda_k+\mu_n^\alpha)^2}\right|
 \le\lambda_k\mu_n^{-2\alpha}.
\]
Using \eqref{eq:zk}, \eqref{eq:sandwich}, and
$\pi n/4\le\mu_n\le\pi n/2$, we obtain
\[
 \begin{aligned}
 \sum_{k\le2n}
 \left|\frac{\lambda_k(\mu_n^\alpha-\lambda_k)}
 {\mu_n^\alpha(\lambda_k+\mu_n^\alpha)^2}\right|
 \abs{\ip{z_n}{\varphi_k}}^2\le C_\alpha\mu_n^{-2-2\alpha}
 \sum_{k\le2n}k^{\alpha-2}
 =O_\alpha(\mu_n^{-2-2\alpha}).
 \end{aligned}
\]
For $k>2n$,
\[
 \left|\frac{\lambda_k(\mu_n^\alpha-\lambda_k)}
 {\mu_n^\alpha(\lambda_k+\mu_n^\alpha)^2}\right|
 \le\mu_n^{-\alpha},
\]
and \eqref{eq:zk} gives
\[
 \begin{aligned}
 &\sum_{k>2n}
 \left|\frac{\lambda_k(\mu_n^\alpha-\lambda_k)}
 {\mu_n^\alpha(\lambda_k+\mu_n^\alpha)^2}\right|
 \abs{\ip{z_n}{\varphi_k}}^2\\
 &\qquad\le C_\alpha\mu_n^{-2-\alpha}
 \sum_{k>2n}k^{-2}
 =O_\alpha(\mu_n^{-3-\alpha}).
 \end{aligned}
\]
Finally, \eqref{eq:zlimit} gives
\[
 \mu_n^{-\alpha}\norm[L^2]{z_n}^2
 =\mu_n^{-2-\alpha}\norm[L^2(I)]{R_{\alpha,n}}^2
 +o_\alpha(\mu_n^{-2-\alpha}).
\]
It follows that
\begin{equation}\label{eq:psi-qmode-comparison}
 \begin{aligned}
 &\bigl(\calE_{\alpha,I}[\psi_n]
 -\mu_n^\alpha\norm[L^2]{\psi_n}^2\bigr)
 -\bigl(\calE_{\alpha,I}[\wt\phi_n]
 -\mu_n^\alpha\norm[L^2]{\wt\phi_n}^2\bigr)\\
 &\qquad=\mu_n^{-2-\alpha}\norm[L^2(I)]{R_{\alpha,n}}^2
 +o_\alpha(\mu_n^{-2-\alpha}).
 \end{aligned}
\end{equation}
Subtracting \eqref{eq:psi-qmode-comparison} from
\eqref{eq:w-qmode-comparison} gives
\[
 \begin{aligned}
 &\bigl(\calE_{\alpha,I}[w_n]
 -\mu_n^\alpha\norm[L^2]{w_n}^2\bigr)
 -\bigl(\calE_{\alpha,I}[\psi_n]
 -\mu_n^\alpha\norm[L^2]{\psi_n}^2\bigr)\\
 &\qquad=o_\alpha(\mu_n^{-2-\alpha}).
 \end{aligned}
\]

The positivity of $\mathcal A_I$ gives
\[
 \norm[L^2]{\psi_n-\wt\phi_n}
 \le\mu_n^{-\alpha}\norm[L^2]{z_n}
 =O_\alpha(\mu_n^{-1-\alpha}).
\]
Equations \eqref{eq:qnorm-rough}, \eqref{eq:Dexact}, and
\eqref{eq:scaling} give
\[
 \norm[L^2]{\psi_n}^2=1+O_\alpha(\mu_n^{-1}),
 \qquad
 \norm[L^2]{w_n}^2
 =\mu_n^{-1}\bigl(\mu_n+O_\alpha(1)\bigr)
 =1+O_\alpha(\mu_n^{-1}).
\]
Equation \eqref{eq:third-pairing} and the two comparison formulas give
\[
 \abs{\calE_{\alpha,I}[v]-\mu_n^\alpha\norm[L^2]{v}^2}
 \le C_\alpha\mu_n^{-2},
 \qquad v=w_n,\psi_n.
\]
Since both squared norms equal $1+O_\alpha(\mu_n^{-1})$,
\[
 \left|\frac1{\norm[L^2]{w_n}^2}
 -\frac1{\norm[L^2]{\psi_n}^2}\right|
 =O_\alpha(\mu_n^{-1}).
\]
Therefore
\[
 \begin{aligned}
 &\frac{\calE_{\alpha,I}[w_n]}{\norm[L^2]{w_n}^2}
 -\frac{\calE_{\alpha,I}[\psi_n]}{\norm[L^2]{\psi_n}^2}\\
 &=\frac{
 (\calE_{\alpha,I}[w_n]-\mu_n^\alpha\norm[L^2]{w_n}^2)
 -(\calE_{\alpha,I}[\psi_n]-\mu_n^\alpha\norm[L^2]{\psi_n}^2)}
 {\norm[L^2]{w_n}^2}\\
 &\quad+(\calE_{\alpha,I}[\psi_n]
 -\mu_n^\alpha\norm[L^2]{\psi_n}^2)
 \left(\frac1{\norm[L^2]{w_n}^2}
 -\frac1{\norm[L^2]{\psi_n}^2}\right)\\
 &=o_\alpha(\mu_n^{-2-\alpha})+O_\alpha(\mu_n^{-3})
 =o_\alpha(\mu_n^{-2-\alpha}),
 \end{aligned}
\]
because $\alpha<1$.

For all sufficiently large $n$, \eqref{eq:gaps} ensures that
$\lambda_k\ne\mu_n^\alpha$ for $k\ne n$, and
\[
 \ip{\psi_n}{\varphi_k}
 =\frac{2\lambda_k}{\lambda_k+\mu_n^\alpha}
 \frac{\ip{z_n}{\varphi_k}}{\lambda_k-\mu_n^\alpha}.
\]
Equations \eqref{eq:lambda-mu-close} and \eqref{eq:gaps} also give
\[
 \abs{\lambda_k-\lambda_n}
 \le C_\alpha\abs{\lambda_k-\mu_n^\alpha},
 \qquad k\ne n.
\]
Since $\norm[L^2]{\psi_n}^2\ge1/2$ for large $n$, Parseval gives
\[
 \left|\frac{\calE_{\alpha,I}[\psi_n]}{\norm[L^2]{\psi_n}^2}
 -\lambda_n\right|
 \le C_\alpha\sum_{k\ne n}
 \frac{\lambda_k^2}{(\lambda_k+\mu_n^\alpha)^2}
 \frac{\abs{\ip{z_n}{\varphi_k}}^2}
 {\abs{\lambda_k-\mu_n^\alpha}}.
\]
Splitting the sum into $k<n/2$, $n/2\le k\le2n$, and $k>2n$,
\eqref{eq:sandwich}, \eqref{eq:gaps}, and \eqref{eq:zk} give
\[
 \begin{aligned}
 \left|\frac{\calE_{\alpha,I}[\psi_n]}{\norm[L^2]{\psi_n}^2}
 -\lambda_n\right|
 &\le C_\alpha\left(
 \mu_n^{-2-3\alpha}\sum_{k<n/2}k^{2\alpha-2}
 +\mu_n^{-3-\alpha}\log\mu_n
 +\mu_n^{-3-\alpha}\right)\\
 &=o_\alpha(\mu_n^{-2-\alpha}).
 \end{aligned}
\]
Combining this estimate with the comparison of the two Rayleigh quotients
proves \eqref{eq:filtered} for the full sequence.
\end{proof}

\begin{lemma}\label{lem:small-expansions}
Put
\[
 Q_\alpha=\int_0^\infty t^{-\alpha}F_\alpha(t)\,dt.
\]
The integral defining $Q_\alpha$ converges.  Moreover, as $n\to\infty$,
\begin{equation}\label{eq:FH-expansion}
\begin{aligned}
 &\int_0^{2\mu_n}F_\alpha(\rho)H_n(\rho)\,d\rho\\
 &\quad=\frac{K_\alpha Q_\alpha}{\alpha}(2\mu_n)^{-1-\alpha}
 +\frac{K_{2,\alpha}Q_\alpha}{\alpha}(2\mu_n)^{-1-2\alpha}\\
 &\qquad-\frac{K_\alpha\sqrt{\alpha/2}\,
 \Gamma(1-\alpha)\Gamma(1+2\alpha)}
 {\alpha\Gamma(1+\alpha)}(2\mu_n)^{-1-2\alpha}\\
 &\qquad+(-1)^{n+1}\frac{K_\alpha\cos\theta}{1+2\alpha}
 (2\mu_n)^{-1-2\alpha}
 +o_\alpha(\mu_n^{-1-2\alpha})
\end{aligned}
\end{equation}
and
\begin{equation}\label{eq:GH-expansion}
 \begin{aligned}
 \int_0^{2\mu_n}G_\alpha(2\mu_n-\rho)H_n(\rho)\,d\rho=\frac{K_\alpha M_\alpha}{1+2\alpha}
 (2\mu_n)^{-1-2\alpha}
 +o_\alpha(\mu_n^{-1-2\alpha}).
 \end{aligned}
\end{equation}
\end{lemma}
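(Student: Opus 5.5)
The plan is to write $L=2\mu_n$ and expand $H_n$ using the tail expansion \eqref{eq:Gsmall} of $G_\alpha(L+u)$, differentiated as needed. The integrals $\int_0^{L}F_\alpha H_n$ and $\int_0^L G_\alpha(L-\cdot)H_n$ are then evaluated against this expansion. Three integrals will be used as targets, checked only at the level of exponents and to be verified:
\[
 \int_0^\infty\bigl((1+s)^{-1-\alpha}-1\bigr)s^{-1-\alpha}\,ds
 =-\frac{\Gamma(1-\alpha)\Gamma(1+2\alpha)}{\alpha\Gamma(1+\alpha)},
 \qquad
 \int_0^\infty(1+s)^{-2-2\alpha}\,ds=\frac1{1+2\alpha},
\]
together with $\int_0^\infty(1+s)^{-1-\alpha}\,ds=1/\alpha$. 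The first should follow from a Beta-integral computation after one integration by parts.

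\textbf{Convergence of $Q_\alpha$.} By \eqref{eq:F-zero}, $t^{-\alpha}F_\alpha(t)\sim c\,t^{-\alpha/2}$ near $0$, which is integrable. At infinity, $t^{-\alpha}G_\alpha(t)$ is integrable by \eqref{eq:Gsmall}. The term $t^{-\alpha}\sin(t+\theta)$ converges as an improper integral by one integration by parts. So $Q_\alpha$ is a convergent improper integral, and $\int_0^L t^{-\alpha}F_\alpha(t)\,dt = Q_\alpha-\cos(L+\theta)L^{-\alpha}+O(L^{-1-\alpha})$. Since $L+\theta=n\pi-\theta$, we have $\cos(L+\theta)=(-1)^n\cos\theta$.

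\textbf{Decomposition of $H_n$.} I split
\[
 H_n(\rho)=\frac{G_\alpha(L)}{\alpha}\rho^{-\alpha}
 +\int_0^\infty\bigl(G_\alpha(L+u)-G_\alpha(L)\bigr)(u+\rho)^{-1-\alpha}\,du .
\]
For the second piece, substitute $u=Ls$, $\rho=Lr$ and use $G_\alpha(L+u)-G_\alpha(L)=K_\alpha L^{-1-\alpha}\bigl((1+s)^{-1-\alpha}-1\bigr)+o(L^{-1-\alpha})\min(1,s)$. This gives
\[
 L^{-1-2\alpha}h(\rho/L)+o(L^{-1-2\alpha})\,\omega(\rho/L),
 \qquad
 h(r)=K_\alpha\int_0^\infty\bigl((1+s)^{-1-\alpha}-1\bigr)(s+r)^{-1-\alpha}\,ds .
\]
Here $h$ is bounded and smooth on $[0,1]$, and $\omega$ is an integrable weight. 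The value $h(0)$ is given by the first target integral above. The value $h(1)=K_\alpha\bigl(\tfrac1{1+2\alpha}-\tfrac1\alpha\bigr)$ follows from the second and third.

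\textbf{The first expansion \eqref{eq:FH-expansion}.} The first piece gives $\frac{G_\alpha(L)}{\alpha}\int_0^LF_\alpha\rho^{-\alpha}\,d\rho$. Insert $G_\alpha(L)=K_\alpha L^{-1-\alpha}+K_{2,\alpha}L^{-1-2\alpha}+o$. This produces the two $Q_\alpha$ terms, plus the boundary term $-(-1)^n\cos\theta\,K_\alpha L^{-1-2\alpha}/\alpha$.

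For the second piece, $\int_0^LF_\alpha(\rho)h(\rho/L)\,d\rho$, I write $F_\alpha=\sin(\cdot+\theta)-G_\alpha$. On the $G_\alpha$ part, $h(\rho/L)\to h(0)$ by dominated convergence. On the sine part, one integration by parts gives $h(0)\cos\theta-(-1)^n\cos\theta\,h(1)+O(L^{-1})$. Together these give
\[
 h(0)\Bigl(\cos\theta-\int_0^\infty G_\alpha\Bigr)-(-1)^n\cos\theta\,h(1)
 = h(0)\sqrt{\alpha/2}-(-1)^n\cos\theta\,h(1),
\]
using \eqref{eq:Gmass}. The $-1/\alpha$ part of $h(1)$ cancels the boundary term from the first piece. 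What remains is $(-1)^{n+1}K_\alpha\cos\theta/(1+2\alpha)$ together with the $\Gamma$-term, which is exactly \eqref{eq:FH-expansion}.

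\textbf{The second expansion \eqref{eq:GH-expansion}.} Substitute $\rho=L-v$. Since $G_\alpha\in L^1$, the mass concentrates at $v=O(1)$, where $H_n(L-v)=K_\alpha L^{-1-2\alpha}\int_0^\infty(1+s)^{-2-2\alpha}\,ds+o(L^{-1-2\alpha})$. Multiplying by $M_\alpha$ from \eqref{eq:Gmass} gives \eqref{eq:GH-expansion}. Near $v\approx L$, i.e. $\rho$ small, $G_\alpha(L-\rho)=O(L^{-1-\alpha})$ and $H_n(\rho)\lesssim L^{-1-\alpha}\rho^{-\alpha}$, so this region contributes $O(L^{-1-2\alpha}\cdot L^{-\alpha}\cdot L^{1-\alpha})$, which is negligible.

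\textbf{Main obstacle.} The hard step is making the error terms $o(L^{-1-2\alpha})$ uniform. The sine integral against $h(\rho/L)$ and the $o(\cdot)$ remainder in $H_n$ need care near $\rho=0$, where $H_n\sim\rho^{-\alpha}$. I would handle the remainder by bounding its $\rho$-derivative using the differentiated tail estimates, so that one integration by parts against $\sin(\rho+\theta)$ gains a factor $L^{-1}$ away from $\rho\lesssim 1$. The region $\rho\lesssim1$ is estimated trivially.
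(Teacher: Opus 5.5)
Your proposal is correct. For the convergence of $Q_\alpha$, for splitting $H_n$ into $G_\alpha(2\mu_n)\rho^{-\alpha}/\alpha$ plus a difference term, for that first piece, and for \eqref{eq:GH-expansion}, you follow the paper essentially step by step. In \eqref{eq:GH-expansion} the paper uses dominated convergence where $2\mu_n-\rho\le\mu_n$ and a crude $O(\mu_n^{-1-3\alpha})$ bound where $\rho<\mu_n$, as you do.

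The genuine difference is the difference term in \eqref{eq:FH-expansion}.
\begin{itemize}
\item The paper writes $(u+\rho)^{-1-\alpha}=\Gamma(1+\alpha)^{-1}\int_0^\infty s^\alpha e^{-s(u+\rho)}\,ds$. This factors the double integral as $\Gamma(1+\alpha)^{-1}\int_0^\infty s^\alpha\Phi_n(s)\bigl(\Psi_n(s)-G_\alpha(2\mu_n)/s\bigr)\,ds$. The oscillation of $F_\alpha$ is absorbed by the closed form \eqref{eq:damped-sine}, which gives a uniform bound on $\Phi_n$ and its pointwise limit $\sqrt{\alpha/2}+(-1)^{n+1}\cos\theta\,e^{-z}$ at $s=z/(2\mu_n)$. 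The step then reduces to one dominated-convergence argument in $z$ with majorant $z^\alpha\min\{z^{-1},z^{-2}\}$, and no estimate uniform in $\rho$ is needed.
\item You stay in physical space. Your profile $h$ and one integration by parts against $\sin(\rho+\theta)$ produce the boundary values $h(0)$ and $h(1)$. These are $K_\alpha$ times the integrals in \eqref{eq:first-constant-integral} and \eqref{eq:second-constant-integral}, and they match the two terms of the paper's limit of $\Phi_n$. This makes the origin of each constant more transparent, including the cancellation of the $K_\alpha\cos\theta/\alpha$ term from the first piece.
\end{itemize}

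The price of your route is the remainder. Let $E(t)=G_\alpha(t)-K_\alpha t^{-1-\alpha}$ and $\mathcal E_n(\rho)=\int_0^\infty\bigl(E(2\mu_n+u)-E(2\mu_n)\bigr)(u+\rho)^{-1-\alpha}\,du$. Integrating $|\mathcal E_n|$ against $|F_\alpha|$ only gives $O(\mu_n^{-3\alpha})$, which is too weak for $\alpha<1$, as you note. Your proposed fix works:
\begin{itemize}
\item The differentiated form of \eqref{eq:Gsmall} gives $|E'(t)|\le Ct^{-2-2\alpha}$ for large $t$.
\item Hence $|\mathcal E_n|\le C\mu_n^{-1-3\alpha}$ and $|\mathcal E_n'(\rho)|\le C\mu_n^{-2-2\alpha}\rho^{-\alpha}$ on $(0,2\mu_n)$.
\item One integration by parts over the whole interval then gives $O(\mu_n^{-1-3\alpha})$. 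No separate treatment of $\rho\lesssim1$ is needed, since $\rho^{-\alpha}$ is integrable.
\end{itemize}

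There are three small slips, none affecting the conclusion.
\begin{itemize}
\item $h$ is not smooth at $0$: $h'(r)$ is comparable to $r^{-\alpha}$ as $r\downarrow0$, so $h$ is only in $W^{1,1}(0,1)$. The integration-by-parts remainder $\int_0^1\cos(2\mu_nr+\theta)h'(r)\,dr$ is therefore $o(1)$ by Riemann--Lebesgue (in fact of order $\mu_n^{\alpha-1}$), not $O(\mu_n^{-1})$.
\item The tail $\int_{2\mu_n}^\infty\rho^{-\alpha}G_\alpha(\rho)\,d\rho$ makes the error in your formula for $\int_0^{2\mu_n}t^{-\alpha}F_\alpha(t)\,dt$ of size $O(\mu_n^{-2\alpha})$, not $O(\mu_n^{-1-\alpha})$. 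This is still $o(\mu_n^{-\alpha})$, which is all you use.
\item In \eqref{eq:GH-expansion} the small-$\rho$ region contributes $O(\mu_n^{-1-\alpha}\cdot\mu_n^{-1-\alpha}\cdot\mu_n^{1-\alpha})=O(\mu_n^{-1-3\alpha})$. The product you display, $L^{-1-2\alpha}\cdot L^{-\alpha}\cdot L^{1-\alpha}=L^{-4\alpha}$, would not be negligible for $\alpha<1/2$.
\end{itemize}
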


\begin{proof}
By \eqref{eq:F-zero},
\[
 t^{-\alpha}F_\alpha(t)=O_\alpha(t^{-\alpha/2}),
 \qquad 0<t\le1.
\]
Since $0<\alpha<1$, this is integrable at zero.  For $t\ge1$,
\eqref{eq:Gsmall} gives
\[
 t^{-\alpha}G_\alpha(t)=O_\alpha(t^{-1-2\alpha}).
\]
For $1\le A<B$, integration by parts gives
\[
 \left|\int_A^B t^{-\alpha}\sin(t+\theta)\,dt\right|
 \le3A^{-\alpha}.
\]
Thus $\int_1^\infty t^{-\alpha}\sin(t+\theta)\,dt$ converges, and
$Q_\alpha$ is well defined.

The expansion \eqref{eq:Gsmall}, its differentiated form, and complete
monotonicity imply
\begin{equation}\label{eq:G-tail-bounds-small}
 0\le G_\alpha(t)\le C_\alpha t^{-1-\alpha},
 \qquad
 0\le-G_\alpha'(t)\le C_\alpha t^{-2-\alpha},
 \qquad t\ge1.
\end{equation}
For $0<\rho<2\mu_n$,
\begin{equation}\label{eq:H-decomposition}
 \begin{aligned}
 H_n(\rho)
 =\frac{G_\alpha(2\mu_n)}{\alpha\rho^\alpha}+\int_0^\infty
 \frac{G_\alpha(2\mu_n+u)-G_\alpha(2\mu_n)}
 {(u+\rho)^{1+\alpha}}\,du.
 \end{aligned}
\end{equation}
Indeed,
$\int_0^\infty(u+\rho)^{-1-\alpha}\,du=\rho^{-\alpha}/\alpha$.
Moreover,
\[
 \begin{aligned}
 &\int_0^{2\mu_n}|F_\alpha(\rho)|\int_0^\infty
 \frac{|G_\alpha(2\mu_n+u)-G_\alpha(2\mu_n)|}
 {(u+\rho)^{1+\alpha}}\,du\,d\rho\\
 &\qquad\le\frac{G_\alpha(2\mu_n)}\alpha
 \int_0^{2\mu_n}\rho^{-\alpha}|F_\alpha(\rho)|\,d\rho<\infty.
 \end{aligned}
\]
Thus Fubini's theorem may be applied to \eqref{eq:H-decomposition}.  Using
\[
 (u+\rho)^{-1-\alpha}
 =\frac1{\Gamma(1+\alpha)}
 \int_0^\infty s^\alpha e^{-s(u+\rho)}\,ds,
\]
define
\[
 \Phi_n(s)=\int_0^{2\mu_n}e^{-s\rho}F_\alpha(\rho)\,d\rho,
 \qquad
 \Psi_n(s)=\int_0^\infty e^{-su}G_\alpha(2\mu_n+u)\,du.
\]
Then
\begin{equation}\label{eq:FH-Laplace-decomposition}
 \begin{aligned}
 &\int_0^{2\mu_n}F_\alpha(\rho)H_n(\rho)\,d\rho\\
 &\quad=\frac{G_\alpha(2\mu_n)}\alpha
 \int_0^{2\mu_n}\rho^{-\alpha}F_\alpha(\rho)\,d\rho\\
 &\qquad+\frac1{\Gamma(1+\alpha)}\int_0^\infty
 s^\alpha\Phi_n(s)
 \left(\Psi_n(s)-\frac{G_\alpha(2\mu_n)}s\right)ds.
 \end{aligned}
\end{equation}

We first expand the first term in \eqref{eq:FH-Laplace-decomposition}.
Since $2\mu_n+2\theta=n\pi$,
\[
 \cos(2\mu_n+\theta)=\cos(n\pi-\theta)
 =(-1)^n\cos\theta.
\]
Integration by parts gives
\[
 \begin{aligned}
 \int_{2\mu_n}^\infty
 \rho^{-\alpha}\sin(\rho+\theta)\,d\rho&=(2\mu_n)^{-\alpha}\cos(2\mu_n+\theta)
 -\alpha\int_{2\mu_n}^\infty\rho^{-1-\alpha}
 \cos(\rho+\theta)\,d\rho\\
 &=(-1)^n\cos\theta\,(2\mu_n)^{-\alpha}
 +O_\alpha(\mu_n^{-1-\alpha}).
 \end{aligned}
\]
By \eqref{eq:G-tail-bounds-small},
\[
 \begin{aligned}
 0&\le\int_{2\mu_n}^\infty
 \rho^{-\alpha}G_\alpha(\rho)\,d\rho\\
 &\le C_\alpha\int_{2\mu_n}^\infty
 \rho^{-1-2\alpha}\,d\rho
 =O_\alpha(\mu_n^{-2\alpha})
 =o_\alpha(\mu_n^{-\alpha}).
 \end{aligned}
\]
Consequently,
\begin{equation}\label{eq:weighted-F-tail}
 \int_0^{2\mu_n}\rho^{-\alpha}F_\alpha(\rho)\,d\rho
 =Q_\alpha+(-1)^{n+1}\cos\theta\,(2\mu_n)^{-\alpha}
 +o_\alpha(\mu_n^{-\alpha}).
\end{equation}
Equations \eqref{eq:Gsmall} and \eqref{eq:weighted-F-tail} yield
\begin{equation}\label{eq:FH-first-part}
\begin{aligned}
 &\frac{G_\alpha(2\mu_n)}\alpha
 \int_0^{2\mu_n}\rho^{-\alpha}F_\alpha(\rho)\,d\rho\\
 &\quad=\frac{K_\alpha Q_\alpha}{\alpha}(2\mu_n)^{-1-\alpha}+\left(
 \frac{K_{2,\alpha}Q_\alpha}{\alpha}
 +(-1)^{n+1}\frac{K_\alpha\cos\theta}{\alpha}
 \right)(2\mu_n)^{-1-2\alpha}
 +o_\alpha(\mu_n^{-1-2\alpha}).
\end{aligned}
\end{equation}

We next evaluate the second term in \eqref{eq:FH-Laplace-decomposition}.
Direct integration gives
\begin{equation}\label{eq:damped-sine}
 \begin{aligned}
 \int_0^{2\mu_n}e^{-s\rho}\sin(\rho+\theta)\,d\rho
 =\frac{s\sin\theta+\cos\theta
 -e^{-2\mu_ns}\bigl(s\sin(2\mu_n+\theta)
 +\cos(2\mu_n+\theta)\bigr)}{1+s^2}.
 \end{aligned}
\end{equation}
Since $G_\alpha\in L^1(0,\infty)$, \eqref{eq:damped-sine} and
\eqref{eq:Gmass} imply
\begin{equation}\label{eq:Phi-bounds-limit}
 \sup_{\substack{n\ge1\\s>0}}|\Phi_n(s)|\le C_\alpha,
 \qquad
 \Phi_n\left(\frac z{2\mu_n}\right)
 -\sqrt{\frac\alpha2}
 -(-1)^{n+1}\cos\theta\,e^{-z}\longrightarrow0
 \quad(z>0).
\end{equation}
Indeed, the sine integral in \eqref{eq:damped-sine}, with
$s=z/(2\mu_n)$, differs from
$\cos\theta+(-1)^{n+1}\cos\theta e^{-z}$ by $o_\alpha(1)$, while
\[
 \int_0^{2\mu_n}e^{-z\rho/(2\mu_n)}G_\alpha(\rho)\,d\rho
 \longrightarrow\int_0^\infty G_\alpha(\rho)\,d\rho=M_\alpha.
\]

Changing variables $u=2\mu_nv$ gives, for each $z>0$,
\begin{equation}\label{eq:Psi-limit}
 \begin{aligned}
 &(2\mu_n)^\alpha\left(
 \Psi_n\left(\frac z{2\mu_n}\right)
 -\frac{2\mu_nG_\alpha(2\mu_n)}z\right)\\
 &\quad=(2\mu_n)^{1+\alpha}\int_0^\infty e^{-zv}
 \bigl(G_\alpha(2\mu_n(1+v))-G_\alpha(2\mu_n)\bigr)\,dv\\
 &\quad\longrightarrow K_\alpha\int_0^\infty
 \bigl((1+v)^{-1-\alpha}-1\bigr)e^{-zv}\,dv.
 \end{aligned}
\end{equation}
Indeed, for $n\ge1$ and $v\ge0$, \eqref{eq:G-tail-bounds-small} gives
\[
 (2\mu_n)^{1+\alpha}
 \left|G_\alpha(2\mu_n(1+v))-G_\alpha(2\mu_n)\right|
 \le C_\alpha\bigl((1+v)^{-1-\alpha}+1\bigr).
\]
After multiplication by $e^{-zv}$, the right-hand side is integrable in
$v$.  Thus \eqref{eq:Gsmall} and dominated convergence prove
\eqref{eq:Psi-limit}.  We next prove two bounds for its left-hand side.
First,
\[
 \begin{aligned}
 &(2\mu_n)^\alpha\left|
 \Psi_n\left(\frac z{2\mu_n}\right)
 -\frac{2\mu_nG_\alpha(2\mu_n)}z\right|\\
 &\quad\le(2\mu_n)^\alpha\left(
 \Psi_n\left(\frac z{2\mu_n}\right)
 +\frac{2\mu_nG_\alpha(2\mu_n)}z\right)\\
 &\quad\le\frac{2(2\mu_n)^{1+\alpha}G_\alpha(2\mu_n)}z
 \le\frac{C_\alpha}{z}.
 \end{aligned}
\]
Second, integration by parts in $v$ gives
\[
 \Psi_n\left(\frac z{2\mu_n}\right)
 -\frac{2\mu_nG_\alpha(2\mu_n)}z
 =\frac{(2\mu_n)^2}{z}\int_0^\infty
 e^{-zv}G_\alpha'(2\mu_n(1+v))\,dv,
\]
and hence
\[
 \begin{aligned}
 &(2\mu_n)^\alpha\left|
 \Psi_n\left(\frac z{2\mu_n}\right)
 -\frac{2\mu_nG_\alpha(2\mu_n)}z\right|\\
 &\qquad\le\frac{C_\alpha}{z}
 \int_0^\infty e^{-zv}(1+v)^{-2-\alpha}\,dv
 \le\frac{C_\alpha}{z^2}.
 \end{aligned}
\]
Therefore
\begin{equation}\label{eq:Psi-dominating-bound}
 (2\mu_n)^\alpha\left|
 \Psi_n\left(\frac z{2\mu_n}\right)
 -\frac{2\mu_nG_\alpha(2\mu_n)}z\right|
 \le C_\alpha\min\{z^{-1},z^{-2}\}.
\end{equation}
Because $0<\alpha<1$,
$z^\alpha\min\{z^{-1},z^{-2}\}$ is integrable on $(0,\infty)$.
After the change of variables $s=z/(2\mu_n)$, equations
\eqref{eq:Phi-bounds-limit}--\eqref{eq:Psi-dominating-bound} give
\begin{equation}\label{eq:FH-scaled-limit}
 \begin{aligned}
 &(2\mu_n)^{1+2\alpha}
 \frac1{\Gamma(1+\alpha)}\int_0^\infty
 s^\alpha\Phi_n(s)
 \left(\Psi_n(s)-\frac{G_\alpha(2\mu_n)}s\right)ds\\
 &\quad=\frac{K_\alpha}{\Gamma(1+\alpha)}
 \int_0^\infty z^\alpha
 \left(\sqrt{\frac\alpha2}
 +(-1)^{n+1}\cos\theta\,e^{-z}\right)\\
 &\qquad\qquad\times\int_0^\infty
 \bigl((1+v)^{-1-\alpha}-1\bigr)e^{-zv}\,dv\,dz
 +o_\alpha(1).
 \end{aligned}
\end{equation}
The two integrals obtained by reversing the order of integration are
\begin{align}
 \int_0^\infty\bigl((1+v)^{-1-\alpha}-1\bigr)v^{-1-\alpha}\,dv
 &=-\frac{\Gamma(1-\alpha)\Gamma(1+2\alpha)}
 {\alpha\Gamma(1+\alpha)},
 \label{eq:first-constant-integral}\\
 \int_0^\infty\bigl((1+v)^{-1-\alpha}-1\bigr)
 (1+v)^{-1-\alpha}\,dv
 &=\frac1{1+2\alpha}-\frac1\alpha.
 \label{eq:second-constant-integral}
\end{align}
Using
\[
 \int_0^\infty z^\alpha e^{-zv}\,dz
 =\Gamma(1+\alpha)v^{-1-\alpha},
 \qquad
 \int_0^\infty z^\alpha e^{-z(1+v)}\,dz
 =\Gamma(1+\alpha)(1+v)^{-1-\alpha},
\]
equations \eqref{eq:FH-scaled-limit}--\eqref{eq:second-constant-integral}
show that the second term in \eqref{eq:FH-Laplace-decomposition} equals
\begin{equation}\label{eq:FH-second-part}
 \begin{aligned}
 &\left[-\frac{K_\alpha\sqrt{\alpha/2}\,
 \Gamma(1-\alpha)\Gamma(1+2\alpha)}
 {\alpha\Gamma(1+\alpha)}
 +(-1)^{n+1}K_\alpha\cos\theta
 \left(\frac1{1+2\alpha}-\frac1\alpha\right)\right]\\
 &\qquad\times(2\mu_n)^{-1-2\alpha}
 +o_\alpha(\mu_n^{-1-2\alpha}).
 \end{aligned}
\end{equation}
Adding \eqref{eq:FH-first-part} and \eqref{eq:FH-second-part} proves
\eqref{eq:FH-expansion}.

For \eqref{eq:GH-expansion}, the substitutions
$x=2\mu_n-\rho$ and $y=2\mu_n+u$ give
\begin{equation}\label{eq:GH-double-integral}
 \begin{aligned}
 &\int_0^{2\mu_n}G_\alpha(2\mu_n-\rho)H_n(\rho)\,d\rho\\
 &\qquad=\int_0^{2\mu_n}G_\alpha(x)
 \int_{2\mu_n}^\infty
 \frac{G_\alpha(y)}{(y-x)^{1+\alpha}}\,dy\,dx.
 \end{aligned}
\end{equation}
For every fixed $x\ge0$, the change of variables $y=2\mu_nv$ gives
\[
 \begin{aligned}
 (2\mu_n)^{1+2\alpha}\int_{2\mu_n}^\infty
 \frac{G_\alpha(y)}{(y-x)^{1+\alpha}}\,dy=(2\mu_n)^{1+\alpha}\int_1^\infty
 G_\alpha(2\mu_nv)
 \left(v-\frac{x}{2\mu_n}\right)^{-1-\alpha}\,dv.
 \end{aligned}
\]
For all sufficiently large $n$ and $v\ge1$,
\[
 (2\mu_n)^{1+\alpha}G_\alpha(2\mu_nv)
 \left(v-\frac{x}{2\mu_n}\right)^{-1-\alpha}
 \le C_\alpha v^{-2-2\alpha}.
\]
Since the left-hand side converges to
$K_\alpha v^{-2-2\alpha}$, dominated convergence gives
\begin{equation}\label{eq:GH-inner-limit}
 (2\mu_n)^{1+2\alpha}\int_{2\mu_n}^\infty
 \frac{G_\alpha(y)}{(y-x)^{1+\alpha}}\,dy
 \longrightarrow\frac{K_\alpha}{1+2\alpha}.
\end{equation}
If $0\le x\le\mu_n$, then $y-x\ge y/2$ for $y\ge2\mu_n$, and
\begin{equation}\label{eq:GH-inner-bound}
 \begin{aligned}
 (2\mu_n)^{1+2\alpha}\int_{2\mu_n}^\infty
 \frac{G_\alpha(y)}{(y-x)^{1+\alpha}}\,dy\le C_\alpha(2\mu_n)^{1+2\alpha}
 \int_{2\mu_n}^\infty y^{-2-2\alpha}\,dy
 \le C_\alpha.
 \end{aligned}
\end{equation}
For every $x\ge0$, equations \eqref{eq:GH-inner-limit} and
\eqref{eq:GH-inner-bound} give
\[
 \begin{aligned}
 0\le\one_{\{x\le\mu_n\}}G_\alpha(x)(2\mu_n)^{1+2\alpha}
 \int_{2\mu_n}^\infty
 \frac{G_\alpha(y)}{(y-x)^{1+\alpha}}\,dy\le C_\alpha G_\alpha(x),
 \end{aligned}
\]
and the left-hand side converges pointwise to
$K_\alpha G_\alpha(x)/(1+2\alpha)$.  Since
$G_\alpha\in L^1(0,\infty)$, dominated convergence gives
\begin{equation}\label{eq:GH-main-range}
 \begin{aligned}
 \int_0^{\mu_n}G_\alpha(x)\int_{2\mu_n}^\infty
 \frac{G_\alpha(y)}{(y-x)^{1+\alpha}}\,dy\,dx=\frac{K_\alpha M_\alpha}{1+2\alpha}
 (2\mu_n)^{-1-2\alpha}
 +o_\alpha(\mu_n^{-1-2\alpha}).
 \end{aligned}
\end{equation}
For $\mu_n<x<2\mu_n$, put $\rho=2\mu_n-x$ and $u=y-2\mu_n$.
By \eqref{eq:G-tail-bounds-small},
\[
 \begin{aligned}
 &\int_{\mu_n}^{2\mu_n}G_\alpha(x)\int_{2\mu_n}^\infty
 \frac{G_\alpha(y)}{(y-x)^{1+\alpha}}\,dy\,dx\\
 &\quad=\int_0^{\mu_n}G_\alpha(2\mu_n-\rho)
 \int_0^\infty\frac{G_\alpha(2\mu_n+u)}
 {(\rho+u)^{1+\alpha}}\,du\,d\rho\\
 &\quad\le C_\alpha(2\mu_n)^{-2-2\alpha}
 \int_0^{\mu_n}\int_0^\infty
 (\rho+u)^{-1-\alpha}\,du\,d\rho\\
 &\quad=\frac{C_\alpha}{\alpha}(2\mu_n)^{-2-2\alpha}
 \int_0^{\mu_n}\rho^{-\alpha}\,d\rho\\
 &\quad=O_\alpha(\mu_n^{-1-3\alpha})
 =o_\alpha(\mu_n^{-1-2\alpha}).
 \end{aligned}
\]
Combining this estimate with \eqref{eq:GH-double-integral} and
\eqref{eq:GH-main-range} proves \eqref{eq:GH-expansion}.
\end{proof}
\begin{proposition}\label{prop:small}
Equation \eqref{eq:main-small} holds.
\end{proposition}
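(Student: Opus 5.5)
We combine \cref{prop:filtered} with the exact identity \eqref{eq:Dexact} and the expansions of \cref{lem:small-expansions}.

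\textbf{Step 1 (reduction to $D_n$).} By \eqref{eq:scaling},
\[
 \frac{\calE_{\alpha,I}[w_n]}{\norm[L^2(I)]{w_n}^2}
 =\mu_n^\alpha\frac{\calE_{\alpha,(0,2\mu_n)}[V_n]}{\norm[L^2(0,2\mu_n)]{V_n}^2}
 =\mu_n^\alpha+\mu_n^\alpha\frac{D_n}{\norm[L^2(0,2\mu_n)]{V_n}^2}.
\]
By \eqref{eq:filtered}, this equals $\lambda_n+o_\alpha(\mu_n^{-2-\alpha})$.

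\textbf{Step 2 (the normalization).} Using only $\norm{V_n}^2=\mu_n+O_\alpha(1)$ from \eqref{eq:Dexact} costs a relative error $O(\mu_n^{-1})$. Since $D_n$ will be of order $\mu_n^{-1-\alpha}$, the product $\mu_n^{\alpha}D_n/\norm{V_n}^2$ is of order $\mu_n^{-2}$, so that relative error is $O(\mu_n^{-3})$. This is $o(\mu_n^{-2-\alpha})$ because $\alpha<1$. Hence we may replace $\norm{V_n}^2$ by $\mu_n$ and obtain
\[
\lambda_n=\mu_n^\alpha+\mu_n^{\alpha-1}D_n+o_\alpha(\mu_n^{-2-\alpha}).
\]
It therefore suffices to show that
\[
D_n=(-1)^n\kappa_\alpha\mu_n^{-1-\alpha}+(A_\alpha+(-1)^nB_\alpha)\mu_n^{-1-2\alpha}+o(\mu_n^{-1-2\alpha}).
\]

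\textbf{Step 3 (substitution).} Insert \eqref{eq:FH-expansion} and \eqref{eq:GH-expansion} into the first line of \eqref{eq:Dexact}, with $-2c_\alpha(-1)^{n+1}=2c_\alpha(-1)^n$. This gives:
\begin{itemize}
\item a leading term $(-1)^n 2c_\alpha K_\alpha Q_\alpha\alpha^{-1}(2\mu_n)^{-1-\alpha}$;
\item an oscillating $(2\mu_n)^{-1-2\alpha}$ term $(-1)^n2c_\alpha\bigl(K_{2,\alpha}Q_\alpha/\alpha-K_\alpha\sqrt{\alpha/2}\,\Gamma(1-\alpha)\Gamma(1+2\alpha)/(\alpha\Gamma(1+\alpha))\bigr)$;
\item a non-oscillating term $2c_\alpha\bigl(-K_\alpha\cos\theta+K_\alpha M_\alpha\bigr)/(1+2\alpha)\cdot(2\mu_n)^{-1-2\alpha}$. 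Since $(-1)^n(-1)^{n+1}=-1$ and $M_\alpha-\cos\theta=-\sqrt{\alpha/2}$, this equals $-2c_\alpha K_\alpha\sqrt{\alpha/2}(1+2\alpha)^{-1}(2\mu_n)^{-1-2\alpha}$.
\end{itemize}
Using $c_\alpha K_\alpha\sqrt{\alpha/2}=\alpha c_\alpha^2/2$ and $\kappa_\alpha=\alpha c_\alpha 2^{-\alpha-1}$, the non-oscillating coefficient is
\[
-\frac{\alpha c_\alpha^2}{1+2\alpha}\,2^{-1-2\alpha}=-\frac{2\kappa_\alpha^2}{\alpha(1+2\alpha)}=A_\alpha,
\]
as required.

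\textbf{Step 4 (evaluating $Q_\alpha$; the main obstacle).} Matching the leading term with $\kappa_\alpha$ requires $c_\alpha K_\alpha Q_\alpha/\alpha=\alpha c_\alpha/2$, that is, $Q_\alpha=\sqrt{\alpha/2}\,\Gamma(\ldots)$-type closed form. I would compute $Q_\alpha$ from the Laplace transform \eqref{eq:F-Laplace} via
\[
t^{-\alpha}=\Gamma(\alpha)^{-1}\int_0^\infty s^{\alpha-1}e^{-st}\,ds,
\]
which gives
\[
Q_\alpha=\Gamma(\alpha)^{-1}\int_0^\infty s^{\alpha-1}\sqrt{\alpha/2}\,e^{-I_\alpha(s)}(1+s^2)^{-1}\,ds.
\]
The conditional convergence at $t\to\infty$ is handled by a damping factor $e^{-\epsilon t}$ and Abel's theorem. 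Evaluating this integral in closed form is the hard step. I would try to use the Wiener--Hopf structure: $e^{-I_\alpha}$ factorizes $(1+s^2)/(1-s^\alpha)$-type symbols, so a contour rotation $s\mapsto e^{\pm i\pi/2}s$ combines the two boundary values into $\abs{\xi}^\alpha-1$ and reduces $Q_\alpha$ to a Mellin-type integral computed by the reflection formula. Alternatively, since Step 3 must reproduce the already proved two-term formula \cref{thm:two-term}, equating the $\mu_n^{-1-\alpha}$ coefficients forces
\[
2c_\alpha K_\alpha Q_\alpha\alpha^{-1}2^{-1-\alpha}=\kappa_\alpha,
\]
which determines $Q_\alpha$ with no computation. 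I would adopt this shortcut; it is valid since both expansions describe the same $\lambda_n$.

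\textbf{Step 5 (the oscillating coefficient).} Substituting this value of $Q_\alpha$ and $K_{2,\alpha}$ from \eqref{eq:K2small}, simplify the oscillating $(2\mu_n)^{-1-2\alpha}$ coefficient, multiplied by $2^{-1-2\alpha}$, with Gamma and trigonometric identities ($\Gamma(1-\alpha)\Gamma(1+\alpha)=\pi\alpha/\sin\pi\alpha$ and $2\cos^2(\pi\alpha/2)-\tfrac12=\cos\pi\alpha+\tfrac12$). It should collapse to $B_\alpha$; this is routine but error-prone bookkeeping.
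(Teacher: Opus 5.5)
Your proposal is correct and follows the paper's proof essentially step for step. You reduce via \eqref{eq:scaling} and \eqref{eq:filtered}, replace $\norm[L^2(0,2\mu_n)]{V_n}^2$ by $\mu_n$ at cost $O_\alpha(\mu_n^{-3})=o_\alpha(\mu_n^{-2-\alpha})$, and identify $Q_\alpha=\alpha^2/(2K_\alpha)$ by matching the $\mu_n^{-2}$ coefficient with \cref{thm:two-term}, which is the same shortcut the paper takes instead of computing $Q_\alpha$ directly; then you substitute. The bookkeeping you left open in Step 5 does close: $c_\alpha\Gamma(1-\alpha)=\tfrac{\alpha}{2}\sec\frac{\pi\alpha}{2}$ turns the oscillating coefficient of $D_n$ into $(-1)^n\alpha c_\alpha\frac{\Gamma(1+2\alpha)}{\Gamma(1+\alpha)}\bigl(2\cos\frac{\pi\alpha}{2}-\sec\frac{\pi\alpha}{2}\bigr)$, and then $c_\alpha/\Gamma(1+\alpha)=\sin(\pi\alpha/2)/\pi$ together with $\sin x\,(2\cos x-\sec x)=\cos(2x)\tan x$ yields exactly $B_\alpha$.
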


\begin{proof}
By \eqref{eq:Dexact}, \eqref{eq:FH-expansion}, and
\eqref{eq:GH-expansion},
\[
 D_n=-\frac{2c_\alpha(-1)^{n+1}K_\alpha Q_\alpha}{\alpha}
 (2\mu_n)^{-1-\alpha}
 +o_\alpha(\mu_n^{-1-\alpha}).
\]
Equations \eqref{eq:scaling} and \eqref{eq:Dexact} give
\[
 \begin{aligned}
 \frac{\calE_{\alpha,I}[w_n]}{\norm[L^2(I)]{w_n}^2}
 -\mu_n^\alpha
 =\mu_n^\alpha\frac{D_n}{\norm[L^2(0,2\mu_n)]{V_n}^2}=(-1)^n\frac{c_\alpha K_\alpha Q_\alpha}
 {\alpha2^\alpha}\mu_n^{-2}+o_\alpha(\mu_n^{-2}).
 \end{aligned}
\]
By \eqref{eq:filtered} and \eqref{eq:third}, multiplication by
$(-1)^n\mu_n^2$ followed by $n\to\infty$ gives
\[
 \frac{c_\alpha K_\alpha Q_\alpha}{\alpha2^\alpha}
 =\kappa_\alpha
 =\frac{\alpha c_\alpha}{2^{\alpha+1}}.
\]
Therefore
\begin{equation}\label{eq:Q-value}
 Q_\alpha=\frac{\alpha^2}{2K_\alpha}.
\end{equation}

Substituting \eqref{eq:FH-expansion} and \eqref{eq:GH-expansion}
into \eqref{eq:Dexact} gives
\[
 \begin{aligned}
 D_n={}&-\frac{2c_\alpha(-1)^{n+1}K_\alpha Q_\alpha}{\alpha}
 (2\mu_n)^{-1-\alpha}\\
 &+\biggl[
 -\frac{2c_\alpha(-1)^{n+1}K_{2,\alpha}Q_\alpha}{\alpha}
 +\frac{2c_\alpha(-1)^{n+1}K_\alpha\sqrt{\alpha/2}\,
 \Gamma(1-\alpha)\Gamma(1+2\alpha)}
 {\alpha\Gamma(1+\alpha)}\\
 &\qquad+\frac{2c_\alpha K_\alpha(M_\alpha-\cos\theta)}
 {1+2\alpha}\biggr](2\mu_n)^{-1-2\alpha}
 +o_\alpha(\mu_n^{-1-2\alpha}).
 \end{aligned}
\]
By \eqref{eq:Q-value}, \eqref{eq:K2small}, and
$M_\alpha-\cos\theta=-\sqrt{\alpha/2}$,
\[
 -\frac{2c_\alpha(-1)^{n+1}K_\alpha Q_\alpha}{\alpha}
 =-(-1)^{n+1}\alpha c_\alpha,
 \qquad
 \frac{2c_\alpha K_\alpha(M_\alpha-\cos\theta)}{1+2\alpha}
 =-\frac{\alpha c_\alpha^2}{1+2\alpha}.
\]
Moreover,
\[
 \begin{aligned}
 &-\frac{2c_\alpha(-1)^{n+1}K_{2,\alpha}Q_\alpha}{\alpha}
 +\frac{2c_\alpha(-1)^{n+1}K_\alpha\sqrt{\alpha/2}\,
 \Gamma(1-\alpha)\Gamma(1+2\alpha)}
 {\alpha\Gamma(1+\alpha)}\\
 &\qquad=-(-1)^{n+1}
 \frac{\alpha c_\alpha\Gamma(1+2\alpha)}{\Gamma(1+\alpha)}
 \left(2\cos\frac{\pi\alpha}{2}
 -\sec\frac{\pi\alpha}{2}\right).
 \end{aligned}
\]
Thus
\begin{equation}\label{eq:DL-full-small}
 \begin{aligned}
 D_n={}&-(-1)^{n+1}\alpha c_\alpha(2\mu_n)^{-1-\alpha}\\
 &-\left[\frac{\alpha c_\alpha^2}{1+2\alpha}
 +(-1)^{n+1}
 \frac{\alpha c_\alpha\Gamma(1+2\alpha)}{\Gamma(1+\alpha)}
 \left(2\cos\frac{\pi\alpha}{2}
 -\sec\frac{\pi\alpha}{2}\right)\right]
 (2\mu_n)^{-1-2\alpha}\\
 &+o_\alpha(\mu_n^{-1-2\alpha}).
 \end{aligned}
\end{equation}
Since \eqref{eq:Dexact} gives
$\norm[L^2(0,2\mu_n)]{V_n}^2=\mu_n+O_\alpha(1)$, we have
\[
 \frac{D_n}{\norm[L^2(0,2\mu_n)]{V_n}^2}
 =\frac{D_n}{\mu_n}+O_\alpha(\mu_n^{-3-\alpha})
 =\frac{D_n}{\mu_n}+o_\alpha(\mu_n^{-2-2\alpha}),
\]
where the last equality uses $\alpha<1$.  Equations \eqref{eq:scaling} and
\eqref{eq:DL-full-small} now give
\[
 \begin{aligned}
 \frac{\calE_{\alpha,I}[w_n]}{\norm[L^2(I)]{w_n}^2}
 ={}&\mu_n^\alpha
 +(-1)^n\frac{\alpha c_\alpha}{2^{1+\alpha}}\mu_n^{-2}-\frac{\alpha c_\alpha^2}{2^{1+2\alpha}(1+2\alpha)}
 \mu_n^{-2-\alpha}\\
 &+(-1)^n\frac{\alpha c_\alpha\Gamma(1+2\alpha)}
 {2^{1+2\alpha}\Gamma(1+\alpha)}
 \left(2\cos\frac{\pi\alpha}{2}
 -\sec\frac{\pi\alpha}{2}\right)
 \mu_n^{-2-\alpha}+o_\alpha(\mu_n^{-2-\alpha}).
 \end{aligned}
\]
By \eqref{eq:A-alpha},
\[
 -\frac{\alpha c_\alpha^2}{2^{1+2\alpha}(1+2\alpha)}
 =-\frac{2\kappa_\alpha^2}{\alpha(1+2\alpha)}=A_\alpha.
\]
Moreover,
\[
 \frac{c_\alpha}{\Gamma(1+\alpha)}
 =\frac{\sin(\pi\alpha/2)}\pi,
 \qquad
 \sin x\,(2\cos x-\sec x)=\cos(2x)\tan x.
\]
Hence
\[
 \frac{\alpha c_\alpha\Gamma(1+2\alpha)}
 {2^{1+2\alpha}\Gamma(1+\alpha)}
 \left(2\cos\frac{\pi\alpha}{2}
 -\sec\frac{\pi\alpha}{2}\right)=B_\alpha.
\]
It follows that
\[
 \frac{\calE_{\alpha,I}[w_n]}{\norm[L^2(I)]{w_n}^2}
 =\mu_n^\alpha+(-1)^n\kappa_\alpha\mu_n^{-2}
 +\bigl(A_\alpha+(-1)^nB_\alpha\bigr)\mu_n^{-2-\alpha}
 +o_\alpha(\mu_n^{-2-\alpha}).
\]
Finally, \eqref{eq:filtered} proves \eqref{eq:main-small} for the full
sequence.
\end{proof}

\begin{proof}[Proof of \Cref{thm1}]
Combine \cref{prop:small,prop:alpha-ge-one}.
\end{proof}

\section{Uniform eigenfunction bounds and nodal sets}
\label{sec:Linfty-proof}
\label{sec:nodal}

We first derive an exact representation of $\varphi_n$, then prove
\Cref{thm2,thm:nodal} in this order. Throughout this section,
$0<\alpha<2$, and all eigenfunctions are real and normalized as in
\eqref{eq:eigensystem}.

Heat-kernel domination and Plancherel, as in
\cite[proof of Proposition~2]{KwasnickiInterval}, give
\begin{equation}\label{eq:fixed-eigenfunction-bounded}
 \norm[L^\infty(I)]{\varphi_n}
 =e^{\lambda_n}\norm[L^\infty(I)]{e^{-\mathcal A_I}\varphi_n}
 \le e^{\lambda_n}
 \left(\frac1{2\pi}\int_\R e^{-2|\xi|^\alpha}\,d\xi\right)^{1/2}<\infty.
\end{equation}

\subsection{Boundary values and symmetry}
\label{subsec:nodal-preliminaries}

The eigenvalue equation gives
\begin{equation}\label{eq:nodal-weak}
 \calE_{\alpha,I}(\varphi_n,v)=\lambda_n\ip{\varphi_n}{v}
 \quad(v\in\wt H^{\alpha/2}(I)),
 \qquad
 \mathcal A\varphi_n=\lambda_n\varphi_n\quad\text{in }\mathcal D'(I).
\end{equation}

The following lemma combines boundary regularity and the Pohozaev identity
with the quadratic-form comparisons. In particular, the reflection symmetry \eqref{eq:nodal-reflection} in the case $\alpha=1$ is proved by Kulczycki–Kwaśnicki–Małecki–Stós \cite[Corollary~3]{KulczyckiKwasnickiMaleckiStos}. 

\begin{lemma}\label{lem:nodal-boundary}\label{lem:nodal-continuity}
For every $n\ge1$,
\begin{equation}\label{eq:nodal-boundary-reg}
 \begin{gathered}
 \varphi_n\in C^{\alpha/2}(\R),\qquad
 \frac{\varphi_n}{(1-|\cdot|)^{\alpha/2}}\in C(\overline I),\\
 |\varphi_n(x)|\le C_{\alpha,n}(1-|x|)^{\alpha/2},\qquad x\in I,
 \end{gathered}
\end{equation}
and
\begin{equation}\label{eq:nodal-reflection}
 \varphi_n(-x)=(-1)^{n+1}\varphi_n(x),\qquad x\in I.
\end{equation}
The boundary limits
\[
 B_+(\varphi_n)=\lim_{x\uparrow1}
       \frac{\varphi_n(x)}{(1-x)^{\alpha/2}},\qquad
 B_-(\varphi_n)=\lim_{x\downarrow-1}
       \frac{\varphi_n(x)}{(1+x)^{\alpha/2}}
\]
are nonzero.
\end{lemma}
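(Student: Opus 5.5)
Three things have to be established: boundary regularity, reflection symmetry, and non-vanishing of the boundary coefficients $B_\pm$. For regularity, \eqref{eq:fixed-eigenfunction-bounded} gives $\varphi_n\in L^\infty(I)$, so $\varphi_n$ solves $\mathcal A\varphi_n=f$ in $I$, $\varphi_n=0$ outside $I$, with bounded right-hand side $f=\lambda_n\varphi_n$. The Ros-Oton--Serra boundary regularity theory \cite{RosOtonSerraRegularity} for bounded $C^{1,1}$ domains then gives $\varphi_n\in C^{\alpha/2}(\R)$ and $\varphi_n/\delta^{\alpha/2}\in C^{\gamma}(\overline I)$ with $\delta(x)=1-|x|$. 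The bound $|\varphi_n|\le C_{\alpha,n}\delta^{\alpha/2}$ and the existence of the limits $B_\pm(\varphi_n)$ follow at once.

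Reflection symmetry uses simplicity of eigenvalues. The map $\varphi\mapsto\varphi(-\cdot)$ commutes with $\mathcal A_I$, so $\varphi_n(-x)=\pm\varphi_n(x)$. To pin down the sign $(-1)^{n+1}$, I would argue by parity counting. Even eigenfunctions are the eigenfunctions of $\mathcal A_I$ restricted to the even subspace, and odd ones likewise. By \eqref{eq:gaps} together with \eqref{eq:qmode-L2close}, for all large $n$ the quasimode $u_n$, which has parity $(-1)^{n+1}$, is $L^2$-close to $\varphi_n$. Hence $\varphi_n$ has that same parity for $n\ge n_0(\alpha)$. For small $n$ I would use a counting argument. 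Write $N_{\pm}(\Lambda)$ for the number of even/odd eigenvalues below $\Lambda$. The Weyl asymptotics with the two-term formula show $N_+$ and $N_-$ differ by at most one, and that they alternate once $n\ge n_0$. It remains to show alternation holds from the start. I expect this step to be the main obstacle. The plan is a continuity argument in $\alpha$. Eigenvalues of the even and odd parts depend continuously on $\alpha\in(0,2)$, and each part separately has simple spectrum. An even eigenvalue and an odd eigenvalue could only swap order by coinciding at some $\alpha$, which would contradict simplicity of the full operator. At the limit $\alpha\to2$, which gives the Dirichlet Laplacian with alternating parities, the ordering is known. Alternatively, one can invoke the Nazarov/Musina--Nazarov comparison \eqref{eq:sandwich} applied separately on the even and odd subspaces.

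Non-vanishing of $B_\pm$ comes from the fractional Pohozaev identity of Ros-Oton--Serra. For the eigenfunction on $I$ it reads
\[
 \lambda_n\int_I x\varphi_n\varphi_n'\,dx\cdot(-2)+(\text{scaling term})
 =\frac{\Gamma(1+\alpha/2)^2}{2}\bigl(B_+(\varphi_n)^2+B_-(\varphi_n)^2\bigr).
\]
Using $\int_I x\varphi\varphi'=-\tfrac12\|\varphi\|^2$ and $\calE[\varphi_n]=\lambda_n$, the left side simplifies to a positive multiple of $\lambda_n$, roughly $\alpha\lambda_n$. So $B_+^2+B_-^2>0$. By the reflection symmetry, $|B_+|=|B_-|$, and therefore both are nonzero.
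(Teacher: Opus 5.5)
Your regularity step (Ros-Oton--Serra applied with the bounded right-hand side $\lambda_n\varphi_n$) and your non-vanishing step (Pohozaev identity plus $|B_+|=|B_-|$) are the paper's. One small correction: with $\norm[L^2(I)]{\varphi_n}=1$ the identity is $\alpha\lambda_n=\Gamma(1+\alpha/2)^2\bigl(B_+(\varphi_n)^2+B_-(\varphi_n)^2\bigr)$, so your factor $\tfrac12$ is off. This does not affect the conclusion.

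The real difference is in fixing the sign $(-1)^{n+1}$. The paper uses neither quasimodes nor asymptotics nor any variation in $\alpha$. It applies $Q_N\le\calE_{\alpha,I}\le Q_D$ with min--max separately on $X_{\mathrm e}$ and $X_{\mathrm o}$; both $L_N$ and $L_D$ commute with $x\mapsto-x$. The Neumann/Dirichlet frequencies are $(j-1)\pi,(j-\tfrac12)\pi$ on even functions and $(j-\tfrac12)\pi,j\pi$ on odd functions. This gives, at each fixed $\alpha$,
\[
 \lambda_j^{\mathrm e}\le\bigl((j-\tfrac12)\pi\bigr)^\alpha\le\lambda_j^{\mathrm o}\le(j\pi)^\alpha\le\lambda_{j+1}^{\mathrm e},
\]
which simplicity makes strict. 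Hence $\lambda_{2j-1}=\lambda_j^{\mathrm e}$ and $\lambda_{2j}=\lambda_j^{\mathrm o}$ for all $n$ at once.

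This is exactly the ``alternative'' you mention in one line, and it should be your main argument. It makes your large-$n$ quasimode step superfluous. That step is itself correct: $u_n$ has parity $(-1)^{n+1}$, and \eqref{eq:qmode-L2close} does not depend on this lemma.

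Your primary route, a homotopy in $\alpha$ starting from the Dirichlet Laplacian at $\alpha=2$, is logically sound but rests on two facts you do not prove:
\begin{enumerate}
\item the ordered even and odd eigenvalues depend continuously on $\alpha\in(0,2)$;
\item they converge to $((j-\tfrac12)\pi)^2$ and $(j\pi)^2$ as $\alpha\uparrow2$.
\end{enumerate}
Both need a genuine argument, because the form domain $\wt H^{\alpha/2}(I)$ changes with $\alpha$. Upper semicontinuity is easy via the common core $C_c^\infty(I)$, but lower semicontinuity requires a compactness argument for the eigenfunctions. So this route costs real work and buys nothing over the fixed-$\alpha$ comparison.

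Finally, your claim that Weyl asymptotics together with the two-term formula show $N_+$ and $N_-$ differ by at most one is unsupported. Large-$n$ eigenvalue asymptotics say nothing about the parities of finitely many low eigenfunctions. Moreover, ``differ by at most one'' is precisely the interlacing that the comparison argument supplies.
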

\begin{proof}
By \eqref{eq:fixed-eigenfunction-bounded}, $\lambda_n\varphi_n\in L^\infty(I)$.
Thus \cite[Proposition~1.1 and Theorem~1.2]{RosOtonSerraRegularity}, with
$s=\alpha/2$, give \eqref{eq:nodal-boundary-reg} and the two boundary limits.

Let $Rv(x)=v(-x)$. The even and odd subspaces of $L^2(I)$ are
\[
 \begin{aligned}
 X_{\mathrm e}&=\{v\in L^2(I):v(-x)=v(x)\text{ for a.e. }x\in I\},\\
 X_{\mathrm o}&=\{v\in L^2(I):v(-x)=-v(x)\text{ for a.e. }x\in I\}.
 \end{aligned}
\]
They are orthogonal, and $v=(v+Rv)/2+(v-Rv)/2$ gives
$L^2(I)=X_{\mathrm e}\oplus X_{\mathrm o}$.
Since $R$ preserves the form domain and commutes with $\mathcal A_I$,
these subspaces are invariant. Denote the eigenvalues of the restrictions
by $\lambda_j^{\mathrm e}$ and $\lambda_j^{\mathrm o}$, respectively,
in nondecreasing order and counted with multiplicity.

Let $L_N,L_D$ be the classical Neumann and Dirichlet Laplacians on $I$,
with spectral fractional forms
\[
 Q_N[v]=\norm[L^2(I)]{L_N^{\alpha/4}v}^{2},\qquad
 Q_D[v]=\norm[L^2(I)]{L_D^{\alpha/4}v}^{2}.
\]
By \cite[Lemma~1 and Theorem~2]{MusinaNazarov} and
\cite[equation~(1) and Theorem~3]{NazarovComparison},
\[
 \begin{gathered}
 \Dom(Q_D)=\wt H^{\alpha/2}(I)
       \subset\Dom(Q_N)=H^{\alpha/2}(I),\\
 Q_N[v]\le\calE_{\alpha,I}[v]\le Q_D[v]
       \qquad(v\in\wt H^{\alpha/2}(I)).
 \end{gathered}
\]
Here zero extensions are identified with their restrictions to $I$.
The $j$th Neumann/Dirichlet frequencies are
$((j-1)\pi,(j-\tfrac12)\pi)$ in $X_{\mathrm e}$ and
$((j-\tfrac12)\pi,j\pi)$ in $X_{\mathrm o}$. Thus min--max gives
\[
 \begin{aligned}
 ((j-1)\pi)^\alpha
 &\le\lambda_j^{\mathrm e}\le((j-\tfrac12)\pi)^\alpha,\\
 ((j-\tfrac12)\pi)^\alpha
 &\le\lambda_j^{\mathrm o}\le(j\pi)^\alpha.
 \end{aligned}
\]
Applying the first line also with $j+1$, we obtain
\[
 \lambda_j^{\mathrm e}
 \le\bigl((j-\tfrac12)\pi\bigr)^\alpha
 \le\lambda_j^{\mathrm o}
 \le(j\pi)^\alpha
 \le\lambda_{j+1}^{\mathrm e},\qquad j\ge1.
\]
Simplicity in \eqref{eq:eigensystem} excludes equal even and odd eigenvalues, so
\[
 \lambda_j^{\mathrm e}<\lambda_j^{\mathrm o}<\lambda_{j+1}^{\mathrm e},
 \qquad
 \lambda_{2j-1}=\lambda_j^{\mathrm e},\quad
 \lambda_{2j}=\lambda_j^{\mathrm o}.
\]
This proves \eqref{eq:nodal-reflection}. Taking boundary limits gives
$B_-(\varphi_n)=(-1)^{n+1}B_+(\varphi_n)$.

Let $\nu(x)$ denote the outward unit normal at
$x\in\partial I=\{-1,1\}$, so $\nu(-1)=-1$, $\nu(1)=1$, and $x\nu(x)=1$.
The Pohozaev identity \cite[Theorem~1.1]{RosOtonSerraPohozaev},
with $\Omega=I$, $s=\alpha/2$, $f(t)=\lambda_nt$, and
$\norm[L^2(I)]{\varphi_n}=1$, gives
\[
 \begin{aligned}
 0<\alpha\lambda_n
 &=\Gamma(1+\alpha/2)^2
   \bigl(-\nu(-1)B_-(\varphi_n)^2+\nu(1)B_+(\varphi_n)^2\bigr)\\
 &=2\Gamma(1+\alpha/2)^2B_+(\varphi_n)^2.
 \end{aligned}
\]
Hence both boundary limits are nonzero.
\end{proof}

For the remaining fixed-eigenpair arguments, write $k_n=\lambda_n^{1/\alpha}$, and choose the sign so that
\begin{equation}\label{eq:nodal-right-sign}
 B_+(\varphi_n)>0.
\end{equation}
Constants may depend on $\alpha,n$ and on fixed test functions or compact
subintervals. By the definition of $B_+(\varphi_n)$ and
\eqref{eq:nodal-right-sign},
 for fixed $\alpha,n$, there is $0<\delta<1$ such that
\begin{equation}\label{eq:nodal-right-positive}
 \varphi_n(x)\ge\tfrac12B_+(\varphi_n)(1-x)^{\alpha/2}>0
 \qquad(1-\delta<x<1).
\end{equation}

\subsection{A strictly positive bilateral Laplace transform}
\label{subsec:nodal-laplace}

For comparison, \cite[Theorem~1.1, formula~(1.1)]{KwasnickiHalfLine}
gives a positive unilateral Laplace transform for generalized half-line
eigenfunctions. The next lemma concerns every interval eigenfunction.

\begin{lemma}\label{lem:nodal-Laplace}
Under \eqref{eq:nodal-right-sign},
\begin{equation}\label{eq:nodal-Laplace-positive}
 \mathcal L_n(t):=\int_{-1}^1e^{tx}\varphi_n(x)\,dx>0\qquad(t>0).
\end{equation}
\end{lemma}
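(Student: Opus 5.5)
The plan is to realize $\mathcal L_n$ as a restriction of the Fourier transform and to exploit the eigenvalue equation through a Wiener--Hopf type identity. Since $\varphi_n\in L^\infty(I)$ is compactly supported, $\widehat{\varphi_n}$ is entire and $\mathcal L_n(t)=\widehat{\varphi_n}(it)$. I would first settle the behaviour for large $t$. By \eqref{eq:nodal-boundary-reg} and \eqref{eq:nodal-right-positive}, Laplace's method at the endpoint $x=1$ gives
\[
 \mathcal L_n(t)=e^{t}\,B_+(\varphi_n)\,\Gamma(1+\alpha/2)\,t^{-1-\alpha/2}\bigl(1+o(1)\bigr),\qquad t\to\infty.
\]
This is positive for large $t$, and the left endpoint contributes only $O(e^{-t})$. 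By continuity, it would then suffice to show that $\mathcal L_n$ has no zero on $(0,\infty)$.

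For the no-zero statement I would use the exterior values of $\mathcal A\varphi_n$. Set $f=\mathcal A\varphi_n-\lambda_n\varphi_n$ on $\R$. By \eqref{eq:nodal-weak}, $f=0$ on $I$. By \cref{lem:operator-bound}, for $|x|>1$ we have $f(x)=-c_\alpha\int_I\varphi_n(y)|x-y|^{-1-\alpha}\,dy$, and $f$ is smooth off $\overline I$ with $|x|^{-1-\alpha}$ decay. On the Fourier side this reads $(|\xi|^\alpha-\lambda_n)\widehat{\varphi_n}(\xi)=\widehat f_+(\xi)+\widehat f_-(\xi)$, where $f_\pm$ are the pieces supported on $\pm[1,\infty)$. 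I would factor $|\xi|^\alpha-\lambda_n$ in the Wiener--Hopf manner, that is, $\kappa^+(\xi)\kappa^-(\xi)$ with factors analytic and zero-free in the upper and lower half-planes. I expect these factors to be built from the same function $I_\alpha$ as in \eqref{eq:I}, rescaled by $k_n$, which is where the positivity $\sqrt{\alpha/2}\,e^{-I_\alpha(s)}/(1+s^2)>0$ of \eqref{eq:F-Laplace} should enter. Evaluating at $\xi=it$ would then express $\mathcal L_n(t)$ as a ratio whose denominator is a manifestly positive Wiener--Hopf factor. The numerator should be a Stieltjes-type integral of the exterior datum $f_+$ against a positive kernel, plus an $e^{-2t}$-suppressed contribution from $f_-$. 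I would aim for the analogue of Kwa\'snicki's half-line formula, in which the positive unilateral Laplace transform comes from the same factorization.

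The main obstacle is the sign of this numerator: $f_+(x)=-c_\alpha\int_I\varphi_n(y)(x-y)^{-1-\alpha}\,dy$ need not have a definite sign, since $\varphi_n$ changes sign. My first attempt would be a change of viewpoint. Writing $(x-y)^{-1-\alpha}=\Gamma(1+\alpha)^{-1}\int_0^\infty s^\alpha e^{-s(x-y)}\,ds$ gives
\[
 f_+(x)=-\frac{c_\alpha}{\Gamma(1+\alpha)}\int_0^\infty s^\alpha e^{-sx}\,\mathcal L_n(s)\,ds .
\]
So $f_+$ is itself determined by $\mathcal L_n$ on $(0,\infty)$, and the Wiener--Hopf identity becomes a linear integral equation for $\mathcal L_n$ on $(0,\infty)$ with a kernel that I hope is positive. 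Then I would run a continuity/first-zero argument. Suppose $t_0$ is the largest zero of $\mathcal L_n$ (it exists if zeros exist, by the large-$t$ asymptotics). Evaluating the integral equation at $t_0$, the right-hand side is an integral of a positive kernel against $\mathcal L_n$, which is positive on $(t_0,\infty)$. The contribution from $(0,t_0)$ must be shown to be dominated, via the $e^{-s}$ decay and the boundary term from $B_+(\varphi_n)>0$. This would contradict $\mathcal L_n(t_0)=0$.

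If this domination fails, the fallback is to treat the even and odd cases separately using \eqref{eq:nodal-reflection}. In those cases $\mathcal L_n(t)=\int_{-1}^1\cosh(tx)\varphi_n$ or $\int_{-1}^1\sinh(tx)\varphi_n$, respectively, and I would try to obtain positivity from the interlacing of $\lambda_n$ with the Neumann and Dirichlet comparison eigenvalues used in \cref{lem:nodal-boundary}.
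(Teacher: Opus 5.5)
\textbf{There is a genuine gap: the no-zero step, which is the whole content of the lemma, is not proved.}

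Your large-$t$ analysis is fine. The error from $[-1,1-\delta]$ is $O(e^{(1-\delta)t})$, not just $O(e^{-t})$ from the left endpoint, but it is still negligible against $e^{t}t^{-1-\alpha/2}$. Reducing to ``$\mathcal L_n$ has no zero on $(0,\infty)$'' plus continuity is also how the paper concludes. The problem is that this no-zero statement is never established.

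\begin{itemize}
\item The Wiener--Hopf route does not get started as described. The symbol $|\xi|^\alpha-\lambda_n$ changes sign at $\xi=\pm k_n$ on the real axis, so a factorization would first have to be applied to the positive quotient $(|\xi|^\alpha-\lambda_n)/(\xi^2-k_n^2)$, as in the half-line theory.
\item More seriously, on a bounded interval the two exterior pieces $f_+$ and $f_-$ are coupled. There is no closed formula, and an ``$e^{-2t}$-suppressed'' term cannot be discarded in an exact sign statement that must hold for every $t>0$.
\item As you note, $f_+$ has no a priori sign. In the paper, its sign ($\nu_n=-f>0$ on $(1,\infty)$) is deduced \emph{from} this lemma, so any argument that uses it is circular.
\item Rewriting $f_+$ through $\mathcal L_n$ gives a linear integral equation, but the positivity of its kernel is only hoped for. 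The largest-zero argument also needs the contribution of $(0,t_0)$, where $\mathcal L_n$ could a priori be negative and large, to be dominated, and nothing supplies this.
\item The fallback is not a mechanism either. Even/odd interlacing controls eigenvalues, not the sign of $\int\cosh(tx)\varphi_n$ or $\int\sinh(tx)\varphi_n$.
\end{itemize}

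\textbf{The missing idea} is a short quadratic-form argument by contradiction, using only that $\xi\mapsto|\xi|^\alpha$ is increasing; no factorization is needed. Suppose $\mathcal L_n(t)=0$ for some $t>0$. Then \eqref{eq:nodal-reflection} gives $\mathcal L_n(-t)=0$ too, so
\[
 v(x)=(-\partial_x^2+t^2)^{-1}\varphi_n(x)=\frac1{2t}\int_Ie^{-t|x-y|}\varphi_n(y)\,dy
\]
equals $e^{-tx}\mathcal L_n(t)/(2t)=0$ for $x>1$ and $e^{tx}\mathcal L_n(-t)/(2t)=0$ for $x<-1$. Since $v\in H^2(\R)$, it lies in $\wt H^{\alpha/2}(I)$ and is an admissible test function. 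Test \eqref{eq:nodal-weak} with $v-\varphi_n/(k_n^2+t^2)$, whose Fourier transform is $\widehat\varphi_n(\xi)(k_n^2-\xi^2)/\bigl((\xi^2+t^2)(k_n^2+t^2)\bigr)$. This gives
\[
 \int_\R\frac{(|\xi|^\alpha-k_n^\alpha)(\xi^2-k_n^2)}{\xi^2+t^2}\,|\widehat\varphi_n(\xi)|^2\,d\xi=0 .
\]
That is impossible: the integrand is nonnegative, and it is not a.e.\ zero because $\widehat\varphi_n$ is entire and not identically zero. Combined with your large-$t$ positivity and continuity, this proves the lemma.
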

\begin{proof}
Suppose $\mathcal L_n(t)=0$ for some $t>0$. By
\eqref{eq:nodal-reflection},
$\mathcal L_n(-t)=(-1)^{n+1}\mathcal L_n(t)=0$. Put
\begin{equation}\label{eq:nodal-test-resolvent}
 v=(-\partial_x^2+t^2)^{-1}\varphi_n,\qquad
 \widehat v(\xi)=\frac{\widehat\varphi_n(\xi)}{\xi^2+t^2}.
\end{equation}
Then $v\in H^2(\R)$ and
\[
 v(x)=\frac1{2t}\int_Ie^{-t|x-y|}\varphi_n(y)\,dy
 =\begin{cases}
 e^{-tx}\mathcal L_n(t)/(2t)=0,&x>1,\\
 e^{tx}\mathcal L_n(-t)/(2t)=0,&x<-1.
 \end{cases}
\]
Hence $v\in\wt H^{\alpha/2}(I)$. Testing \eqref{eq:nodal-weak} with
$v-\varphi_n/(k_n^2+t^2)$ gives the following contradiction:
\begin{equation}\label{eq:nodal-covariance}
 \begin{aligned}
 0&=\frac1{2\pi}\int_\R (|\xi|^\alpha-k_n^\alpha)
       \left(\frac1{\xi^2+t^2}-\frac1{k_n^2+t^2}\right)
       |\widehat\varphi_n(\xi)|^2\,d\xi\\
  &=-\frac1{2\pi(k_n^2+t^2)}\int_\R
       \frac{(|\xi|^\alpha-k_n^\alpha)(\xi^2-k_n^2)}{\xi^2+t^2}
       |\widehat\varphi_n(\xi)|^2\,d\xi<0.
 \end{aligned}
\end{equation}
Here \eqref{eq:form} gives integrability, and
$\widehat\varphi_n\ne0$ in $L^2(\R)$ gives strict positivity of the integral.
Thus $\mathcal L_n(t)\ne0$ for all $t>0$.

Using \eqref{eq:nodal-right-positive},
\[
 e^{-t(1-\delta/2)}\mathcal L_n(t)
 \ge\int_{1-\delta/2}^{1-\delta/4}\varphi_n(x)\,dx
       -e^{-t\delta/2}\norm[L^1(I)]{\varphi_n}.
\]
The integral on the right is positive, so $\mathcal L_n(t)>0$ for all
sufficiently large $t$. Since $\mathcal L_n\in C((0,\infty))$ and never
vanishes, it is positive everywhere on $(0,\infty)$.
\end{proof}

\begin{lemma}\label{lem:nodal-source}
Define $\nu_n=0$ on $I$ and
\begin{equation}\label{eq:nodal-source-integral}
 \nu_n(y)=c_\alpha\int_{-1}^1
          \frac{\varphi_n(x)}{|y-x|^{1+\alpha}}\,dx
 \qquad(|y|>1).
\end{equation}
Then $\nu_n\in L^1(\R)$ and
\begin{equation}\label{eq:nodal-source-def}
 \nu_n=\lambda_n\varphi_n-\mathcal A\varphi_n
 \quad\text{in }\mathcal D'(\R).
\end{equation}
Moreover,
\begin{equation}\label{eq:nodal-source-sign}
 \nu_n(y)>0\quad(y>1),\qquad \nu_n(-y)=(-1)^{n+1}\nu_n(y),
\end{equation}
and
\begin{equation}\label{eq:nodal-source-est}
 |\nu_n(1+r)|+|\nu_n(-1-r)|\le
 \begin{cases}
 Cr^{-\alpha/2},&0<r\le1,\\
 Cr^{-1-\alpha},&r\ge1.
 \end{cases}
\end{equation}
\end{lemma}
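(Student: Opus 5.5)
The proof splits into four parts: integrability, the distributional identity, the sign and symmetry, and the decay estimate.

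\textbf{Integrability and the estimate \eqref{eq:nodal-source-est}.} For $y=1+r$ we bound
\[
 |\nu_n(1+r)|\le c_\alpha\int_{-1}^1\frac{|\varphi_n(x)|}{(1+r-x)^{1+\alpha}}\,dx .
\]
By \eqref{eq:nodal-boundary-reg} we have $|\varphi_n(x)|\le C(1-x)^{\alpha/2}$. The substitution $s=1-x$ then gives
\[
 |\nu_n(1+r)|\le C\int_0^2 s^{\alpha/2}(r+s)^{-1-\alpha}\,ds .
\]
For $0<r\le1$, substitute $s=r\sigma$. This gives $Cr^{-\alpha/2}\int_0^\infty\sigma^{\alpha/2}(1+\sigma)^{-1-\alpha}\,d\sigma$, and this last integral converges since $\alpha/2-1-\alpha<-1$. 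For $r\ge1$ the crude bound $(r+s)^{-1-\alpha}\le r^{-1-\alpha}$ together with $\varphi_n\in L^1$ gives $Cr^{-1-\alpha}$. The point $y=-1-r$ is treated symmetrically. Since $r^{-\alpha/2}$ is integrable near $0$ and $r^{-1-\alpha}$ near $\infty$, we get $\nu_n\in L^1(\R)$.

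\textbf{Distributional identity \eqref{eq:nodal-source-def}.} I will verify this against test functions $\psi\in C_c^\infty(\R)$ using the bilinear form \eqref{eq:form}. Since $\varphi_n\in\wt H^{\alpha/2}(I)$, we have $\langle\mathcal A\varphi_n,\psi\rangle=\calE(\varphi_n,\psi)$. Split $\psi=\psi_1+\psi_2$ with $\psi_1\in C_c^\infty(I)$ and $\psi_2$ supported away from a compact subset of $I$. A cleaner route is to expand the double integral
\[
 \calE(\varphi_n,\psi)=\frac{c_\alpha}{2}\iint\frac{(\varphi_n(x)-\varphi_n(y))(\psi(x)-\psi(y))}{|x-y|^{1+\alpha}}\,dx\,dy
\]
into the regions $I\times I$, $I\times I^c$, $I^c\times I$. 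The $I^c\times I^c$ region contributes nothing, because there $\varphi_n$ vanishes.

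\begin{itemize}
\item On the mixed regions, with $x\in I$ and $y\notin I$, the integrand is $\varphi_n(x)(\psi(x)-\psi(y))|x-y|^{-1-\alpha}$. The part carrying $-\varphi_n(x)\psi(y)$ integrates by Fubini to exactly $-\int_{I^c}\nu_n\psi$.
\item The remaining terms involve only $\psi|_I$ and $\varphi_n$ on $I$. They should equal $\lambda_n\int_I\varphi_n\psi$.
\end{itemize}

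To justify the second item, I will use the weak equation \eqref{eq:nodal-weak} with a test function in $\wt H^{\alpha/2}(I)$. Choose $\eta\in\wt H^{\alpha/2}(I)$ agreeing with $\psi$ on $I$. For example, take $\eta=\psi\cdot\chi_\epsilon$ with cutoffs and pass to the limit. Alternatively, note the decomposition
\[
 \calE(\varphi_n,\psi)-\calE(\varphi_n,\psi\one_I)=-\int_{I^c}\nu_n\psi\,dy ,
\]
which comes from the same region split. Then $\calE(\varphi_n,\psi\one_I)=\lambda_n\ip{\varphi_n}{\psi}$, provided $\psi\one_I\in\wt H^{\alpha/2}(I)$. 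That holds only for $\alpha<1$.

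\textbf{Main obstacle.} The difficulty is $\alpha\ge1$, where $\psi\one_I\notin H^{\alpha/2}$. There I would approximate $\psi\one_I$ by $\psi\,\zeta_\epsilon$ with $\zeta_\epsilon\in C_c^\infty(I)$, and pass $\epsilon\to0$ using the boundary decay $|\varphi_n|\le C(1-|x|)^{\alpha/2}$ from \eqref{eq:nodal-boundary-reg}. The cross terms are controlled by this decay. Indeed, $\calE(\varphi_n,\psi(1-\zeta_\epsilon))$ reduces to integrals over a boundary layer of width $\epsilon$ and tends to $-\int_{I^c}\nu_n\psi$.

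\textbf{Sign and symmetry \eqref{eq:nodal-source-sign}.} The symmetry follows from \eqref{eq:nodal-reflection} by the substitution $x\mapsto-x$. For positivity, fix $y>1$ and write
\[
 |y-x|^{-1-\alpha}=\frac{1}{\Gamma(1+\alpha)}\int_0^\infty s^{\alpha}e^{-s(y-x)}\,ds .
\]
This gives
\[
 \nu_n(y)=\frac{c_\alpha}{\Gamma(1+\alpha)}\int_0^\infty s^\alpha e^{-sy}\mathcal L_n(s)\,ds ,
\]
with Fubini justified by boundedness of $\varphi_n$. The integrand is positive by \cref{lem:nodal-Laplace}, so $\nu_n(y)>0$.
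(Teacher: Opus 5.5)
Your treatment of \eqref{eq:nodal-source-est}, of $\nu_n\in L^1(\R)$, and of \eqref{eq:nodal-source-sign} (Gamma integral, Fubini, \Cref{lem:nodal-Laplace}) coincides with the paper's. For the distributional identity \eqref{eq:nodal-source-def} you take a different route, and it is correct once one step is made precise.

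\textbf{Comparison.} The paper first obtains the identity away from $\{-1,1\}$: on $I$ from \eqref{eq:nodal-weak}, and on $\R\setminus\overline I$ from the separated-support formula in \Cref{lem:operator-bound}. It then excludes a distribution concentrated at $\pm1$ by testing against two-sided cutoffs $\psi_r$ of width $r$ around $\pm1$. This uses $\varphi_n\in C^{\alpha/2}(\R)$ and the translation estimate $\abs{\ip{\mathcal A\varphi_n}{\psi_r}}\le C\int_0^\infty\min\{h,r\}h^{-1-\alpha/2}\,dh\le Cr^{1-\alpha/2}$. You instead split $\calE(\varphi_n,\psi)$ by regions, so that $\nu_n$ appears literally as the mixed-region term. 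You reach the interior through the weak equation with $\psi\zeta_\epsilon$, $\zeta_\epsilon\in C_c^\infty(I)$. Your version shows where $\nu_n$ comes from and handles the exterior and the boundary in one computation. The paper's version is shorter because it reuses \Cref{lem:operator-bound} and needs only one estimate, localized at the two endpoints.

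\textbf{The loose step.} You need the limit $\calE(\varphi_n,g_\epsilon)\to-\int_{I^c}\nu_n\psi$ for $g_\epsilon=\psi(1-\zeta_\epsilon)$, and you say it is ``controlled by the decay.'' Estimating the $I\times I$ part of the double integral in absolute value with the decay bound alone does not close for $\alpha\ge1$. For $x,y$ in the layer, $\abs{g_\epsilon(x)-g_\epsilon(y)}\le C\abs{x-y}/\epsilon$ and $\abs{\varphi_n(x)-\varphi_n(y)}\le C\epsilon^{\alpha/2}$ lead to $\epsilon^{\alpha/2-1}\int_{\abs h<2\epsilon}\abs h^{-\alpha}\,dh=\infty$. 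There are two ways to fix this.

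First, you can use the H\"older continuity $\varphi_n\in C^{\alpha/2}(\R)$, which is also part of \eqref{eq:nodal-boundary-reg}. It gives $O(\epsilon^{1-\alpha/2})$ for the layer terms, just as in the paper.

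Second, take $\zeta_\epsilon=1$ on $\{1-\abs x\ge2\epsilon\}$ with $\abs{\zeta_\epsilon^{(j)}}\le C_j\epsilon^{-j}$, and write $\calE(\varphi_n,g_\epsilon)=\int_I\varphi_n\,\mathcal Ag_\epsilon$. Then $\abs{\mathcal Ag_\epsilon(x)}\le C(1-\abs x)^{-\alpha}$ on $I$, uniformly in $\epsilon$. This follows from $\norm[L^\infty]{g_\epsilon''}\le C\epsilon^{-2}$ when $1-\abs x\le4\epsilon$, and from $\operatorname{dist}(x,\operatorname{supp}g_\epsilon)\ge(1-\abs x)/2$ otherwise. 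Moreover $\mathcal Ag_\epsilon(x)\to-c_\alpha\int_{I^c}\psi(y)\abs{x-y}^{-1-\alpha}\,dy$ pointwise. Dominated convergence with the majorant $C(1-\abs x)^{-\alpha/2}\in L^1(I)$ then gives the limit using only the decay.

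With either fix your argument is complete.
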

\begin{proof}

By
\eqref{eq:nodal-boundary-reg}, for $r>0$,
\begin{equation}\label{eq:nodal-source-near}
 \begin{aligned}
 |\nu_n(1+r)|
 &\le C\int_0^2\frac{z^{\alpha/2}}{(r+z)^{1+\alpha}}\,dz
 =Cr^{-\alpha/2}\int_0^{2/r}
       \frac{z^{\alpha/2}}{(1+z)^{1+\alpha}}\,dz\\
 &\le C\min\{r^{-\alpha/2},r^{-1-\alpha}\}.
 \end{aligned}
\end{equation}
Thus \eqref{eq:nodal-source-est} holds and $\nu_n\in L^1(\R)$.

By \eqref{eq:nodal-weak} and the separated-support formula in
\Cref{lem:operator-bound}, \eqref{eq:nodal-source-def} holds away from
$\{-1,1\}$. Fix $\eta\in C_c^\infty(\R)$ and
$\chi\in C_c^\infty((-2,2))$ with $\chi=1$ on $[-1,1]$. For $0<r<1/4$, set
\[
 \psi_r(x)=\eta(x)\left[\chi\left(\frac{x-1}{r}\right)
                    +\chi\left(\frac{x+1}{r}\right)\right].
\]
Then $\psi_r=\eta$ near $\{-1,1\}$ and, for $h>0$,
\[
 \norm[L^1(\R)]{\psi_r(\cdot+h)-\psi_r}
 \le\min\{h\norm[L^1(\R)]{\psi_r'},2\norm[L^1(\R)]{\psi_r}\}
 \le C_\eta\min\{h,r\}.
\]
The bilinear form of \eqref{eq:translation-form} and
$\varphi_n\in C^{\alpha/2}(\R)$ give
\begin{equation}\label{eq:nodal-no-delta-test}
 \begin{aligned}
 |\langle\mathcal A\varphi_n,\psi_r\rangle|
 &\le c_\alpha\int_0^\infty
 \frac{\norm[L^\infty(\R)]{\varphi_n(\cdot+h)-\varphi_n}
       \norm[L^1(\R)]{\psi_r(\cdot+h)-\psi_r}}{h^{1+\alpha}}\,dh\\
 &\le C\int_0^\infty\frac{\min\{h,r\}}{h^{1+\alpha/2}}\,dh
 \le Cr^{1-\alpha/2}\longrightarrow0.
 \end{aligned}
\end{equation}
Since $\lambda_n\varphi_n-\nu_n\in L^1(\R)$, dominated convergence gives
\[
 \begin{aligned}
 \langle\lambda_n\varphi_n-\mathcal A\varphi_n-\nu_n,\eta\rangle
 &=\langle\lambda_n\varphi_n-\nu_n,\psi_r\rangle
   -\langle\mathcal A\varphi_n,\psi_r\rangle
 \longrightarrow0.
 \end{aligned}
\]
This proves \eqref{eq:nodal-source-def} on all of $\R$.

For $y>1$, the Gamma integral, Fubini, and
\Cref{lem:nodal-Laplace} yield
\begin{equation}\label{eq:nodal-source-Laplace}
	\nu_n(y)=\frac{c_\alpha}{\Gamma(1+\alpha)}
	\int_0^\infty t^\alpha e^{-ty}\mathcal L_n(t)\,dt>0.
\end{equation}
Fubini is justified by
$\int_I|\varphi_n(x)|(y-x)^{-1-\alpha}\,dx<\infty$. The identity $\nu_n(-y)=(-1)^{n+1}\nu_n(y)$ follows from
\eqref{eq:nodal-reflection}. This proves \eqref{eq:nodal-source-sign}.
\end{proof}

Define
\begin{equation}\label{eq:nodal-b-def}
 \beta_n(t)=\int_1^\infty e^{-ty}\nu_n(y)\,dy\qquad(t>0).
\end{equation}
By \eqref{eq:nodal-source-est},
\[
 0<\beta_n(t)\le\norm[L^1(\R)]{\nu_n},\qquad
 \beta_n(t)\le Ce^{-t}\int_0^\infty e^{-tr}r^{-\alpha/2}\,dr
 =C\Gamma(1-\alpha/2)e^{-t}t^{\alpha/2-1}.
\]
Thus
\begin{equation}\label{eq:nodal-b-est}
 0<\beta_n(t)\le
 \begin{cases}
 C,&0<t\le1,\\
 Ce^{-t}t^{\alpha/2-1},&t\ge1.
 \end{cases}
\end{equation}

\subsection{An explicit representation for the eigenfunction}
\label{subsec:nodal-representation}

The following formula specializes
\cite[Proposition~2.17(b), Corollary~2.16(b), and Proposition~2.18]{KwasnickiHalfLine}
to $f(z)=z^{\alpha/2}$. The same quotient appears in
\cite[Proposition~3.1]{DeNittiFernandezReal},
with $s=\alpha/2$ and $\theta=2/\alpha$.

\begin{lemma}\label{lem:nodal-multiplier}
For $a=\lambda_n^{2/\alpha}>0$, define
\begin{equation}\label{eq:nodal-q-def}
 Q_n(z)=\frac{z-a}{z^{\alpha/2}-a^{\alpha/2}}\quad(z\ge0,\ z\ne a),
 \qquad Q_n(a)=\frac{2a^{1-\alpha/2}}{\alpha}.
\end{equation}
Then
\begin{equation}\label{eq:nodal-q-rep}
 Q_n(z)=a^{1-\alpha/2}+\int_0^\infty\frac{z}{z+r}\rho_n(r)\,dr,
\end{equation}
where
\begin{equation}\label{eq:nodal-rho}
 \rho_n(r)=\frac{\sin(\pi\alpha/2)}\pi
 \frac{(r+a)r^{\alpha/2-1}}
      {r^{\alpha}-2\lambda_n r^{\alpha/2}\cos(\pi\alpha/2)+\lambda_n^2}>0.
\end{equation}
Moreover,
\begin{equation}\label{eq:nodal-rho-integrability}
 \begin{gathered}
 \rho_n(r)=O(r^{\alpha/2-1})\quad(r\downarrow0),\qquad
 \rho_n(r)=O(r^{-\alpha/2})\quad(r\to\infty),\\
 \int_0^\infty\frac{\rho_n(r)}{1+r}\,dr<\infty.
 \end{gathered}
\end{equation}
\end{lemma}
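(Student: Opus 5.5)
The plan is to recognize $Q_n$ as a complete Bernstein function and recover $\rho_n$ by Stieltjes inversion: one computes the jump of $Q_n$ across the cut $(-\infty,0]$ and applies Cauchy's formula to $h(z)=(Q_n(z)-Q_n(0))/z$. Throughout, $z^{\alpha/2}$ is the principal branch and $a^{\alpha/2}=\lambda_n$.

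\textbf{Step 1 (analytic extension).} Define $Q_n(z)=(z-a)/(z^{\alpha/2}-\lambda_n)$ on $\mathbb C\setminus(-\infty,0]$.
\begin{itemize}
\item The denominator vanishes only where $z^{\alpha/2}=\lambda_n>0$. Since $|\arg z^{\alpha/2}|<\pi\alpha/2<\pi$, this forces $z=a$.
\item At $z=a$ the singularity is removable. L'H\^opital gives the value $1/\bigl(\tfrac{\alpha}{2}a^{\alpha/2-1}\bigr)=2a^{1-\alpha/2}/\alpha$, matching \eqref{eq:nodal-q-def}.
\item At the origin, $Q_n(0)=a/\lambda_n=a^{1-\alpha/2}$.
\end{itemize}
Hence $h(z)=(Q_n(z)-a^{1-\alpha/2})/z$ is holomorphic off $(-\infty,0]$. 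Its size is controlled as follows:
\begin{itemize}
\item As $|z|\to\infty$, $Q_n(z)\sim z^{1-\alpha/2}$, so $|h(z)|=O(|z|^{-\alpha/2})\to0$.
\item As $z\to0$, $Q_n(z)-Q_n(0)=O(|z|^{\alpha/2})$, from expanding $1/(z^{\alpha/2}-\lambda_n)$. So $|h(z)|=O(|z|^{\alpha/2-1})$, which is integrable near $0$ along small circles.
\end{itemize}

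\textbf{Step 2 (boundary values on the cut).} For $r>0$ we have $(-r\pm i0)^{\alpha/2}=r^{\alpha/2}e^{\pm i\pi\alpha/2}$. Therefore
\[
 \operatorname{Im}Q_n(-r+i0)=\frac{(r+a)\,r^{\alpha/2}\sin(\pi\alpha/2)}{r^{\alpha}-2\lambda_nr^{\alpha/2}\cos(\pi\alpha/2)+\lambda_n^2}.
\]
The denominator equals $|r^{\alpha/2}e^{i\pi\alpha/2}-\lambda_n|^2$. It is strictly positive because $e^{i\pi\alpha/2}\notin\R_{>0}$ when $0<\alpha<2$. This also gives $\rho_n>0$, since $\sin(\pi\alpha/2)>0$.

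\textbf{Step 3 (contour integration).} Apply Cauchy's formula to $h$ on a keyhole contour around $(-\infty,0]$. The large circle contributes nothing by the decay $O(|z|^{-\alpha/2})$, and the small circle contributes nothing by the $O(|z|^{\alpha/2-1})$ bound. Since $h(\bar z)=\overline{h(z)}$, the two sides of the cut combine into imaginary parts, giving
\[
 h(z)=\int_0^\infty\frac{\rho_n(r)}{z+r}\,dr,\qquad \rho_n(r)=\frac1\pi\frac{\operatorname{Im}Q_n(-r+i0)}{r}.
\]
The sign works out: $\operatorname{Im}h(-r+i0)=-\operatorname{Im}Q_n(-r+i0)/r$, and the jump formula for Stieltjes transforms inserts a further minus. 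This expression for $\rho_n$ is exactly \eqref{eq:nodal-rho}. Multiplying by $z$ and adding back $a^{1-\alpha/2}$ yields \eqref{eq:nodal-q-rep}. The identity extends to real $z\ge0$, including $z=a$, by continuity.

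\textbf{Step 4 (asymptotics of $\rho_n$).} These are read off directly from \eqref{eq:nodal-rho}:
\begin{itemize}
\item As $r\downarrow0$, the denominator tends to $\lambda_n^2$, so $\rho_n(r)\sim Cr^{\alpha/2-1}$.
\item As $r\to\infty$, $\rho_n(r)\sim C\,r\cdot r^{\alpha/2-1}/r^{\alpha}=Cr^{-\alpha/2}$.
\end{itemize}
Since $\alpha/2-1>-1$ and $-\alpha/2-1<-1$, the integral $\int_0^\infty\rho_n(r)/(1+r)\,dr$ is finite. This proves \eqref{eq:nodal-rho-integrability}.

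The main obstacle is Step 3. It needs careful justification that the contour contributions vanish, together with correct bookkeeping of the signs and orientation in the inversion formula. The rest is direct computation.
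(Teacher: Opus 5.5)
Your proof is correct, but it reaches the representation by a different route from the paper.

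\textbf{The paper's route.} It does not evaluate a contour integral. It first invokes Kwa\'snicki's complete Bernstein theory (Proposition~2.17(b) and Corollary~2.16(b) of the half-line paper). That theory gives, a priori, $Q_n(z)=a^{1-\alpha/2}+\int_0^\infty\frac{z}{z+r}\,\sigma(dr)$ for some nonnegative measure $\sigma$ with $\int\sigma(dr)/(1+r)<\infty$. Only then does it apply Stieltjes inversion to $(Q_n(z)-Q_n(0))/z$, using the locally uniform boundary limits $Q_n(-r+i0)$, to identify $\sigma=\rho_n\,dr$. The lower bound $|r^{\alpha/2}e^{i\pi\alpha/2}-\lambda_n|^2\ge(1-|\cos(\pi\alpha/2)|)(r^\alpha+\lambda_n^2)$ then gives the asymptotics of $\rho_n$.

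\textbf{Your route.} You obtain existence of the representation and the density in one step, from Cauchy's formula on a keyhole contour. The explicit bounds $h(z)=O(|z|^{-\alpha/2})$ at infinity and $h(z)=O(|z|^{\alpha/2-1})$ at the origin remove the two circle contributions. Positivity of $\rho_n$ is read off the explicit formula, rather than coming from the complete Bernstein structure. Your sign bookkeeping checks out: the two sides of the cut contribute $-\frac1\pi\int\frac{\operatorname{Im}h(-r+i0)}{r+z}\,dr$, and $\operatorname{Im}h(-r+i0)=-\operatorname{Im}Q_n(-r+i0)/r$.

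One step deserves an explicit sentence: passing from contours at height $\pm\delta$ to the two sides of the cut. This needs the boundary values to be attained locally uniformly. That follows because $z^{\alpha/2}$ is continuous up to the cut from either side, and the denominator $r^{\alpha/2}e^{\pm i\pi\alpha/2}-\lambda_n$ does not vanish there.

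\textbf{Comparison.} Your argument is self-contained and elementary. The paper's is shorter and places $Q_n$ in the same complete Bernstein framework used by Kwa\'snicki and by De Nitti--Fern\'andez-Real.
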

\begin{proof}
Since $Q_n(0)=a^{1-\alpha/2}$ and $\lim_{z\to\infty}Q_n(z)/z=0$,
\cite[Proposition~2.17(b) and Corollary~2.16(b)]{KwasnickiHalfLine} give
\[
 Q_n(z)=a^{1-\alpha/2}+\int_0^\infty\frac{z}{z+r}\,\sigma(dr),
 \qquad \int_0^\infty\frac{\sigma(dr)}{1+r}<\infty,
\]
where $\sigma$ is a nonnegative Borel measure on $(0,\infty)$.
Using the principal branch of $z^{\alpha/2}$, the boundary limits
$Q_n(-r+i0)$ are locally uniform for $r>0$.
Indeed, $(-r+i\varepsilon)^{\alpha/2}\to r^{\alpha/2}e^{i\pi\alpha/2}$
uniformly for $r\in[r_0,r_1]\subset(0,\infty)$ as $\varepsilon\downarrow0$,
and the limiting denominator satisfies
\[
 |r^{\alpha/2}e^{i\pi\alpha/2}-\lambda_n|
 \ge r_0^{\alpha/2}\sin(\pi\alpha/2)>0.
\]
Stieltjes inversion
\cite[Proposition~2.18]{KwasnickiHalfLine}, applied to
$(Q_n(z)-Q_n(0))/z$, therefore gives
\[
 \begin{aligned}
 \sigma(dr)&=\frac{\operatorname{Im}Q_n(-r+i0)}{\pi r}\,dr\\
 &=\frac{\sin(\pi\alpha/2)}\pi
   \frac{(r+a)r^{\alpha/2-1}}
        {|r^{\alpha/2}e^{i\pi\alpha/2}-\lambda_n|^2}\,dr
   =\rho_n(r)\,dr.
 \end{aligned}
\]
This proves \eqref{eq:nodal-q-rep}. Finally,
\begin{equation}\label{eq:nodal-denominator-bound}
 |r^{\alpha/2}e^{i\pi\alpha/2}-\lambda_n|^2
 \ge(1-|\cos(\pi\alpha/2)|)(r^\alpha+\lambda_n^2)>0
 \qquad(r>0),
\end{equation}
so \eqref{eq:nodal-rho} gives \eqref{eq:nodal-rho-integrability}.
\end{proof}

Now we can establish an explicit formula for the eigenfunction $\varphi_n$.
\begin{proposition}\label{prop:nodal-representation}
Define
\begin{align}
 \mathcal K_n(t)&=\frac{\sin(\pi\alpha/2)}\pi
 \frac{t^\alpha\beta_n(t)}
      {t^{2\alpha}-2\lambda_nt^\alpha\cos(\pi\alpha/2)+\lambda_n^2}>0,
 \label{eq:nodal-K}\\
 \mathcal H_n(x)&=\int_0^\infty\mathcal K_n(t)
       (e^{tx}+(-1)^{n+1}e^{-tx})\,dt,\qquad -1\le x\le1.
 \label{eq:nodal-h}
\end{align}
Then $\varphi_n,\mathcal H_n\in C(\bar{I})\cap C^\infty(I)$ and, for some
$c_n\in\R\setminus\{0\}$,
\begin{equation}\label{eq:nodal-exact-representation}
 \varphi_n(x)=c_nT_n(x)-\mathcal H_n(x),\qquad
 T_n(x)=\begin{cases}
       \cos(k_nx),&n\text{ odd},\\
       \sin(k_nx),&n\text{ even}.
       \end{cases}
\end{equation}
Moreover,
\begin{gather}
 \mathcal H_n^{(j)}(x)>0\qquad(j=0,1,2,\quad0<x<1),
 \label{eq:nodal-h-signs}\\
 c_nT_n(1)=\mathcal H_n(1)>0,\qquad T_n(1)\ne0,
 \label{eq:nodal-nonresonance}\\
 0\le\mathcal H_n(x)<\mathcal H_n(1)\le|c_n|
       \qquad(0\le x<1).
 \label{eq:nodal-amplitude}
\end{gather}
For odd $n$, $\mathcal H_n(0)>0$ and $\mathcal H_n'(0)=0$.
For even $n$, $\mathcal H_n(0)=0$ and $\mathcal H_n'(0)>0$.
\end{proposition}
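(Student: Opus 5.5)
Writing $\Delta=\partial_x^2$ for the classical second derivative, the plan is to turn the nonlocal equation on $I$ into the ODE $(\Delta+k_n^2)\varphi_n=(\text{explicit source})$ on $I$, using the multiplier $Q_n$ of \Cref{lem:nodal-multiplier}. By \Cref{lem:nodal-source}, $(\mathcal A-\lambda_n)\varphi_n=-\nu_n$ on $\R$ in $\mathcal D'$. Fourier transformation gives $(|\xi|^\alpha-\lambda_n)\widehat\varphi_n=-\widehat\nu_n$. Multiplying by $Q_n(\xi^2)$ and using $Q_n(\xi^2)(|\xi|^\alpha-\lambda_n)=\xi^2-k_n^2$ yields
\[
 (-\Delta-k_n^2)\varphi_n=-Q_n(-\Delta)\nu_n .
\]
On $I$ we have $\nu_n=0$. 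By \eqref{eq:nodal-q-rep}, $Q_n(-\Delta)\nu_n=a^{1-\alpha/2}\nu_n+\int_0^\infty(-\Delta)(-\Delta+r)^{-1}\nu_n\,\rho_n(r)\,dr$. On $I$ the first term vanishes, and $(-\Delta)(-\Delta+r)^{-1}\nu_n=\nu_n-r(-\Delta+r)^{-1}\nu_n$. With $r=t^2$, the resolvent kernel $\frac1{2t}e^{-t|x-y|}$ and \eqref{eq:nodal-b-def}, \eqref{eq:nodal-source-sign} give, for $x\in I$,
\[
 (-\Delta+t^2)^{-1}\nu_n(x)=\frac1{2t}\bigl(e^{tx}+(-1)^{n+1}e^{-tx}\bigr)\beta_n(t).
\]
Changing variables $r=t^2$ in $\rho_n(r)\,dr$ turns $-r\cdot\frac{1}{2t}\beta_n(t)\rho_n(t^2)2t\,dt$ into $-\pi^{-1}\sin(\pi\alpha/2)\,(t^2+a)t^{\alpha}\beta_n(t)/(\text{denominator})\,dt$, that is, into $-(t^2+k_n^2)\mathcal K_n(t)\,dt$. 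Hence on $I$ we get $(\Delta+k_n^2)\varphi_n=-(\Delta+k_n^2)\mathcal H_n$, because $(\Delta+k_n^2)(e^{\pm tx})=(t^2+k_n^2)e^{\pm tx}$. The signs still need careful bookkeeping; the constant $a^{1-\alpha/2}$ and the normalization of $\rho_n$ are designed exactly so that this matches \eqref{eq:nodal-K}. Thus $\varphi_n+\mathcal H_n$ solves $u''+k_n^2u=0$ on $I$. By \eqref{eq:nodal-reflection}, and since $\mathcal H_n$ has the same parity, it equals $c_nT_n$.

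\textbf{Rigour.} This is the main obstacle. The operator $Q_n(-\Delta)$ acts on $\nu_n\in L^1$ only as a tempered distribution, and the Stieltjes integral must be interchanged with the pairing. I would test against $\eta\in C_c^\infty(I)$ and write $\langle\varphi_n,(\Delta+k_n^2)\eta\rangle$ in Fourier variables, inserting the factorization above. Fubini in $(r,\xi,y)$ is then justified by $\rho_n(r)/(1+r)\in L^1$ from \eqref{eq:nodal-rho-integrability} together with the bounds \eqref{eq:nodal-b-est}. These make $\int\mathcal K_n(t)e^{t}\,dt$ converge, since $\mathcal K_n(t)e^{t}\lesssim t^{-\alpha}t^{\alpha/2-1}$ at infinity, which is integrable for $t\ge1$; near $0$ we have $\mathcal K_n\lesssim t^\alpha$. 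The same bounds give $\mathcal H_n\in C(\bar I)\cap C^\infty(I)$, because for $|x|<1$ the factor $e^{-t(1-|x|)}$ allows differentiation under the integral. Continuity of $\varphi_n$ on $\bar I$ comes from \Cref{lem:nodal-boundary}.

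\textbf{Remaining claims.} Differentiating \eqref{eq:nodal-h}, we get $\mathcal H_n^{(j)}(x)=\int\mathcal K_n t^j(e^{tx}\pm e^{-tx})\,dt$ with the appropriate signs. For $x>0$ each of these combinations is positive, namely $\cosh$ or $\sinh$ type with $e^{tx}>e^{-tx}$, and $\mathcal K_n>0$. This proves \eqref{eq:nodal-h-signs} and the stated values and derivatives at $0$. Since $\varphi_n(1)=0$ by \eqref{eq:nodal-boundary-reg}, we get $c_nT_n(1)=\mathcal H_n(1)>0$, which forces $c_n\neq0$ and $T_n(1)\ne0$. Monotonicity of $\mathcal H_n$ on $[0,1]$ gives $0\le\mathcal H_n(x)<\mathcal H_n(1)$, and $|T_n(1)|\le1$ gives $\mathcal H_n(1)\le|c_n|$, proving \eqref{eq:nodal-amplitude}.
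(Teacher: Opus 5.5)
Your proposal is correct and follows essentially the same route as the paper. It uses the same chain of steps:
\begin{itemize}
\item the Fourier identity $Q_n(\xi^2)\widehat\nu_n=(k_n^2-\xi^2)\widehat\varphi_n$;
\item the Stieltjes representation of $Q_n$, with the pointwise-in-$r$ splitting $\xi^2/(\xi^2+r)=1-r/(\xi^2+r)$ and $\nu_n=0$ on $I$;
\item the resolvent kernel producing $\beta_n(t)$;
\item the substitution $r=t^2$ with $t^2\rho_n(t^2)\beta_n(t)=(t^2+k_n^2)\mathcal K_n(t)$;
\item parity, $\varphi_n(1)=0$, and the signs of $\mathcal H_n^{(j)}$.
\end{itemize}
Your Fubini justification via $\rho_n(r)/(1+r)\in L^1$ and $|\widehat\nu_n|\le\norm[L^1(\R)]{\nu_n}$ does the same job as the paper's bound $Q_n(z)\le C(1+z)^{1-\alpha/2}$.
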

\begin{proof}
Equations~\eqref{eq:nodal-b-est} and \eqref{eq:nodal-denominator-bound} give
\begin{equation}\label{eq:nodal-K-est}
 0<\mathcal K_n(t)\le
 \begin{cases}
 Ct^\alpha,&0<t\le1,\\
 Ce^{-t}t^{-1-\alpha/2},&t\ge1.
 \end{cases}
\end{equation}
Thus, for every integer $j\ge0$ and $0<\delta<1$,
\[
 \int_0^\infty\bigl(e^t+t^je^{(1-\delta)t}\bigr)\mathcal K_n(t)\,dt<\infty.
\]
Hence $\mathcal H_n\in C(\bar{I})\cap C^\infty(I)$ and
\begin{equation}\label{eq:nodal-h-derivatives}
 \mathcal H_n^{(j)}(x)=\int_0^\infty t^j\mathcal K_n(t)
       (e^{tx}+(-1)^{n+j+1}e^{-tx})\,dt.
\end{equation}
For $x>0$, $e^{tx}\pm e^{-tx}>0$, while at $x=0$ the factor in parentheses
is $1+(-1)^{n+j+1}$. This gives \eqref{eq:nodal-h-signs} and the assertions at $0$.

By \Cref{lem:nodal-source,lem:nodal-multiplier},
\begin{equation}\label{eq:nodal-mu-Fourier}
 \widehat\nu_n=(\lambda_n-|\xi|^\alpha)\widehat\varphi_n,\qquad
 Q_n(\xi^2)\widehat\nu_n(\xi)=(k_n^2-\xi^2)\widehat\varphi_n(\xi).
\end{equation}
For $\psi\in C_c^\infty(I)$, Fubini is justified by
\[
 \begin{aligned}
 &\int_0^\infty\rho_n(r)\int_\R\frac{\xi^2}{\xi^2+r}
       |\widehat\nu_n(\xi)\widehat\psi(-\xi)|\,d\xi\,dr\\
 &\qquad\le C\norm[L^1(\R)]{\nu_n}
       \int_\R(1+\xi^2)^{1-\alpha/2}|\widehat\psi(-\xi)|\,d\xi<\infty,
 \end{aligned}
\]
since \eqref{eq:nodal-q-def} gives $Q_n(z)\le C(1+z)^{1-\alpha/2}$.
Using \eqref{eq:nodal-q-rep}, $\xi^2/(\xi^2+r)=1-r/(\xi^2+r)$,
and $\nu_n=0$ on $I$, we obtain
\begin{equation}\label{eq:nodal-source-pairing}
 \begin{aligned}
 \langle(\partial_x^2+k_n^2)\varphi_n,\psi\rangle
 &=\frac1{2\pi}\int_\R Q_n(\xi^2)\widehat\nu_n(\xi)
                      \widehat\psi(-\xi)\,d\xi\\
 &=-\int_0^\infty r\rho_n(r)
       \langle(-\partial_x^2+r)^{-1}\nu_n,\psi\rangle\,dr.
 \end{aligned}
\end{equation}
For $x\in I$, the resolvent kernel and \eqref{eq:nodal-source-sign} give
\begin{equation}\label{eq:nodal-source-resolvent}
 \begin{aligned}
 (-\partial_x^2+t^2)^{-1}\nu_n(x)
 &=\frac1{2t}\int_\R e^{-t|x-y|}\nu_n(y)\,dy\\
 &=\frac{\beta_n(t)}{2t}(e^{tx}+(-1)^{n+1}e^{-tx}).
 \end{aligned}
\end{equation}
Since $t^2\rho_n(t^2)\beta_n(t)=(t^2+k_n^2)\mathcal K_n(t)$,
the substitution $r=t^2$ in \eqref{eq:nodal-source-pairing} yields, in $\mathcal D'(I)$,
\[
 \begin{aligned}
 (\partial_x^2+k_n^2)\varphi_n
 &=-\int_0^\infty(t^2+k_n^2)\mathcal K_n(t)
        (e^{tx}+(-1)^{n+1}e^{-tx})\,dt\\
 &=-(\partial_x^2+k_n^2)\mathcal H_n.
 \end{aligned}
\]
The integral converges locally with all derivatives by \eqref{eq:nodal-K-est}.
Thus
\begin{equation}\label{eq:nodal-ode}
 (\partial_x^2+k_n^2)(\varphi_n+\mathcal H_n)=0.
\end{equation}
Its distributional solutions lie in
$\operatorname{span}\{\cos(k_nx),\sin(k_nx)\}$.
Since $\mathcal{H}_n(-x)=(-1)^{n+1}\mathcal{H}_n(x)$ and \eqref{eq:nodal-reflection},  we have \eqref{eq:nodal-exact-representation}
and $\varphi_n\in C^\infty(I)$.
Finally, continuity and $\varphi_n(1)=0$ give
$c_nT_n(1)=\mathcal H_n(1)>0$, hence $c_n\ne0$ and $T_n(1)\ne0$.
Since $|T_n(1)|\le1$, strict monotonicity of $\mathcal H_n$
gives \eqref{eq:nodal-amplitude}.
\end{proof}

\subsection{Proof of \Cref{thm2}}
\label{subsec:eigenfunction-bounds}

Choose the sign \eqref{eq:nodal-right-sign} and use
\Cref{prop:nodal-representation}. Since $T_n(1)\ne0$ excludes
$k_n=n\pi/2$, \eqref{eq:sandwich} gives
\begin{equation}\label{eq:nodal-frequency-comparison}
 \frac{(n-1)\pi}{2}\le k_n<\frac{n\pi}{2},\qquad k_n>0.
\end{equation}
Set
\[
 A=|c_n|,\qquad
 M=\norm[L^\infty(I)]{\varphi_n}=\max_{[0,1]}|\varphi_n|,
 \qquad \int_0^1\varphi_n^2\,dx=\frac12,
\]
where the last two equalities follow from reflection symmetry.

\medskip\noindent\textbf{The cases $n=1,2$.}
By \eqref{eq:nodal-frequency-comparison}, $T_n>0$ on $(0,1]$,
so \eqref{eq:nodal-nonresonance} gives $c_n>0$. Hence
\[
 \varphi_n''=-k_n^2c_nT_n-\mathcal H_n''<0\quad\text{on }(0,1),
 \qquad \varphi_n(0)\ge0,\qquad\varphi_n(1)=0.
\]
Thus $\varphi_n\ge0$ on $[0,1]$. If $\varphi_n(p)=M$ with
$p\in[0,1)$, concavity gives
\[
 \varphi_n(x)\ge
 \begin{cases}
 Mx/p,&0\le x\le p,\quad p>0,\\
 M(1-x)/(1-p),&p\le x\le1.
 \end{cases}
\]
Squaring and integrating yields
$\tfrac12\ge M^2\bigl(p/3+(1-p)/3\bigr)=M^2/3$, and therefore
\begin{equation}\label{eq:first-two-Linfty}
 \norm[L^\infty(I)]{\varphi_n}\le\sqrt{3/2},\qquad n=1,2.
\end{equation}

\medskip\noindent\textbf{The cases $n=3,4$.}
Now $T_n(1)<0$ by \eqref{eq:nodal-frequency-comparison}, so $c_n=-A$.
Put
\[
 v=-\varphi_n=AT_n+\mathcal H_n,\qquad
 \ell=\frac{\pi}{2k_n}\le\frac1{n-1}.
\]
On $((n-2)\ell,1)$, $T_n<0$ and $v''>0$.
Since $v((n-2)\ell)=\mathcal H_n((n-2)\ell)<A$ and $v(1)=0$,
convexity gives $v\le A$ on $[(n-2)\ell,1]$.
For $n=3$, $v(0)=A+\mathcal H_n(0)>A$.
For $n=4$, $v(\ell)=A+\mathcal H_n(\ell)>A$ and
\[
 v'(x)=Ak_n\cos(k_nx)+\mathcal H_n'(x)>0
 \qquad(0<x\le\ell).
\]
As $\varphi_n=-AT_n-\mathcal H_n\le A$ on $[0,1]$, we can choose
\[
 M=v(p)>A,\qquad
 p\in\begin{cases}[0,\ell),&n=3,\\(\ell,2\ell),&n=4,\end{cases}
 \qquad v'(p)=0.
\]
If $n=3$ and $p=0$, the derivative vanishes by evenness.
Since $\ell\le1/(n-1)$,
\[
 [p-\ell,p+\ell]\subset
 \begin{cases}I,&n=3,\\(0,1),&n=4.\end{cases}
\]
On this interval,
$v''+k_n^2v=\mathcal H_n''+k_n^2\mathcal H_n\ge0$,
using evenness of $\mathcal H_n$ when $n=3$.
For $0\le t\le\ell$, variation of constants gives
\[
 \begin{aligned}
 v(p\pm t)
 &=M\cos(k_nt)+\frac1{k_n}\int_0^t\sin(k_n(t-s))
   (\mathcal H_n''+k_n^2\mathcal H_n)(p\pm s)\,ds\\
 &\ge M\cos(k_nt)\ge0,
 \end{aligned}
\]
since $0\le k_n(t-s)\le\pi/2$ for $0\le s\le t\le\ell$.
Consequently,
\[
 \frac{\pi M^2}{2k_n}
 =2M^2\int_0^\ell\cos^2(k_nt)\,dt
 \le\int_{p-\ell}^{p+\ell}\varphi_n^2\,dx
 \le\begin{cases}1,&n=3,\\1/2,&n=4.\end{cases}
\]
Together with \eqref{eq:nodal-frequency-comparison}, this proves
\begin{equation}\label{eq:third-fourth-Linfty}
 \norm[L^\infty(I)]{\varphi_3}\le\sqrt3,\qquad
 \norm[L^\infty(I)]{\varphi_4}\le\sqrt2.
\end{equation}

\medskip\noindent\textbf{The case $n\ge5$.}
Set
\[
 r=\frac{(n-2)\pi}{2k_n},\qquad h=\mathcal H_n(r/2).
\]
By \eqref{eq:nodal-frequency-comparison},
\[
 \frac35\le\frac{n-2}{n}<r\le\frac{n-2}{n-1}<1.
\]
The point $r$ is a zero of $T_n$, and the next zero is
$n\pi/(2k_n)>1$. Thus \eqref{eq:nodal-nonresonance} gives
\[
 c_nT_n>0\quad\text{on }(r,1),\qquad c_nT_n'(r)=Ak_n.
\]
Since $T_n''=-k_n^2T_n$, $T_n'(0)\mathcal H_n(0)=0$,
$|c_nT_n'|\le Ak_n$, and $\mathcal H_n'\ge0$ on $[0,r]$,
integration by parts gives
\[
 \begin{aligned}
 k_n^2\int_0^r c_nT_n\mathcal H_n\,dx
 &=-[c_nT_n'\mathcal H_n]_0^r
   +\int_0^r c_nT_n'\mathcal H_n'\,dx\\
 &\le-Ak_n\mathcal H_n(r)
      +Ak_n\bigl(\mathcal H_n(r)-\mathcal H_n(0)\bigr)\\
 &=-Ak_n\mathcal H_n(0)\le0.
 \end{aligned}
\]
Also, $\int_0^rT_n^2\,dx=r/2$ and
$\mathcal H_n(x)\ge h$ for $r/2\le x\le r$. Expanding the square yields
\begin{equation}\label{eq:amplitude-energy}
 \begin{aligned}
 1\ge2\int_0^r\varphi_n^2\,dx
 &\ge A^2r+2\int_0^r\mathcal H_n^2\,dx
 \ge r(A^2+h^2).
 \end{aligned}
\end{equation}
On $[0,r]$, $|\varphi_n|\le A+\mathcal H_n(r)$.
On $[r,1]$, $\varphi_n$ is concave with
$\varphi_n(r)=-\mathcal H_n(r)$ and $\varphi_n(1)=0$, so
\[
 -\mathcal H_n(r)\le
 -\frac{1-x}{1-r}\mathcal H_n(r)\le\varphi_n(x)
 \le c_nT_n(x)\le A.
\]
Hence
\begin{equation}\label{eq:maximum-amplitude}
 M\le A+\mathcal H_n(r).
\end{equation}
Convexity of $\mathcal H_n$ on $[r/2,1]$ and
$\mathcal H_n(1)\le A$ give
\[
 \mathcal H_n(r)\le\frac{2(1-r)h+rA}{2-r},\qquad
 M\le\frac{2}{2-r}\bigl(A+(1-r)h\bigr).
\]
Cauchy--Schwarz and \eqref{eq:amplitude-energy} now imply
\begin{equation}\label{eq:uniform-two-bound}
 M^2\le\frac{4(1+(1-r)^2)(A^2+h^2)}{(2-r)^2}
 \le\frac{4(1+(1-r)^2)}{r(2-r)^2}\le4,
\end{equation}
since
\[
 r(2-r)^2-\bigl(1+(1-r)^2\bigr)
 =(1-r)(4r-r^2-2)\ge0\qquad(3/5\le r\le1).
\]
Together with \eqref{eq:first-two-Linfty} and \eqref{eq:third-fourth-Linfty},
this proves the bound $M\le2$ for all $n\ge1$ and $0<\alpha<2$.

\subsection{Proof of \Cref{thm:nodal}}
\label{subsec:nodal-counting}

For a bounded interval $J$, let $Z_J(f)=\{x\in J:f(x)=0\}$. The proof of \Cref{thm:nodal} uses the exact eigenfunction representation and a concavity argument to show that \(\varphi_n\) and \(T_n\) have the same number of zeros in \((0,1)\), with every zero of \(\varphi_n\) being simple. The eigenvalue bounds and the reflection symmetry of $\varphi_n$, together with a separate analysis at \(0\), then yield exactly \(n-1\) simple zeros in \(I\).
\begin{proposition}\label{prop:nodal-zero-count}
With the sign \eqref{eq:nodal-right-sign} and the functions in
\Cref{prop:nodal-representation},
\begin{equation}\label{eq:nodal-positive-half-count}
 \#Z_{(0,1)}(\varphi_n)=\#Z_{(0,1)}(T_n).
\end{equation}
Every zero of $\varphi_n$ is simple. At $0$, $\varphi_n(0)\ne0$ for odd $n$,
whereas $\varphi_n(0)=0$ and $\varphi_n'(0)\ne0$ for even $n$.
\end{proposition}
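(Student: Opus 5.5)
The plan is to work on $[0,1]$ with the representation $\varphi_n=c_nT_n-\mathcal H_n$ from \Cref{prop:nodal-representation}, where $\mathcal H_n,\mathcal H_n',\mathcal H_n''>0$ on $(0,1)$, $c_nT_n(1)=\mathcal H_n(1)>0$, and $0\le\mathcal H_n<\mathcal H_n(1)\le|c_n|$ on $[0,1)$. By \eqref{eq:nodal-frequency-comparison}, $(n-1)\pi/2\le k_n<n\pi/2$. Hence the zeros of $T_n$ in $(0,1)$ are the points $z_j=(2j-1)\pi/(2k_n)$ for odd $n$, and $z_j=j\pi/k_n$ for even $n$. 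Their number is $\lfloor (n-1)/2\rfloor$ in both cases, and the last one, $r=(n-2)\pi/(2k_n)$, lies below $1$. Consecutive extrema of $T_n$ are the points $e_j$ where $|T_n|=1$, together with $0$ (and also $1$ at the end).

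\textbf{Step 1 (the last arch $[r,1]$).} On $(r,1)$ we have $c_nT_n>0$, so $\varphi_n''=-k_n^2c_nT_n-\mathcal H_n''<0$. Thus $\varphi_n$ is strictly concave there, with $\varphi_n(r)=-\mathcal H_n(r)<0$ (or $\le0$) and $\varphi_n(1)=0$. By concavity, $\varphi_n$ has exactly one sign change and no zero in $(r,1)$: the chord from $(r,\varphi_n(r))$ to $(1,0)$ lies strictly below $\varphi_n$, and $\varphi_n>0$ near $1$ by \eqref{eq:nodal-right-positive}. Concavity together with positivity near $1$ forces exactly one zero in $(r,1)$, and that zero is simple, since a concave function crossing from negative to positive has $\varphi_n'>0$ at the crossing. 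Hence $(r,1)$ contributes exactly one zero. This matches the zero $r$ of $T_n$ that we lose in the count.

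\textbf{Step 2 (the interior arches).} Between consecutive zeros $z_{j}<z_{j+1}\le r$ of $T_n$, at the extremum point $e$ of $T_n$ we get $\varphi_n(e)=\pm|c_n|-\mathcal H_n(e)$. Because $\mathcal H_n(e)<|c_n|$, this value has the sign of $c_nT_n(e)$. Signs therefore alternate at the successive extrema $e_1<e_2<\dots$ in $[0,r)$, which gives at least one zero between consecutive extrema.

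For uniqueness and simplicity, we treat each interval between two consecutive extrema where $c_nT_n$ changes sign separately. On the subinterval where $c_nT_n>0$, $\varphi_n$ is strictly concave, so it cannot have a double zero or two zeros while moving from its positive value to a negative one. On the subinterval where $c_nT_n<0$, the same holds for $-\varphi_n=|c_nT_n|+\mathcal H_n>0$, which is therefore positive with no zeros at all. So every zero lies where $c_nT_n>0$. On each maximal interval $J$ of positivity of $c_nT_n$, $\varphi_n$ is strictly concave, and $\varphi_n<0$ at both endpoints of $J$ in $(0,1)$ (there $\varphi_n=-\mathcal H_n<0$). A strictly concave function that is negative at the endpoints and positive at the midpoint extremum has exactly two simple zeros in $J$. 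The last interval $J=(r,1)$ is the exception and has one zero, by Step 1. If $0\in\overline J$, evenness or oddness adjusts this count.

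Counting the positivity intervals of $c_nT_n$ then gives $\#Z_{(0,1)}(\varphi_n)=\#Z_{(0,1)}(T_n)$. I expect this bookkeeping to be the main obstacle: we must check that the number of positivity arches, times two, minus the corrections at $0$ and $1$, equals $\lfloor (n-1)/2\rfloor$. We must also handle the possibility that $c_nT_n\le 0$ on an arch adjacent to $0$. In that case $-\varphi_n\ge\mathcal H_n>0$ there for $x>0$, so there is no zero.

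\textbf{Step 3 (behaviour at $0$).} For odd $n$, $\varphi_n(0)=c_n-\mathcal H_n(0)$, which is nonzero because $\mathcal H_n(0)<\mathcal H_n(1)\le|c_n|$. For even $n$, oddness gives $\varphi_n(0)=0$ and $\varphi_n'(0)=c_nk_n-\mathcal H_n'(0)$. To show $\varphi_n'(0)\neq0$, I would use that $\varphi_n''=-k_n^2c_nT_n-\mathcal H_n''$. If $c_n>0$, then $\varphi_n$ is strictly concave on $(0,\pi/k_n)\cap(0,1)$ and vanishes at $0$. If $\varphi_n'(0)\le0$, then $\varphi_n<0$ throughout that arch, contradicting $\varphi_n(e_1)=c_n-\mathcal H_n(e_1)>0$. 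If $c_n<0$, then $-\varphi_n'(0)=|c_n|k_n+\mathcal H_n'(0)>0$ directly. Simplicity of all zeros in $(0,1)$ follows from the strict concavity in Steps 1–2, and simplicity at zeros in $(-1,0)$ follows by reflection using \eqref{eq:nodal-reflection}.
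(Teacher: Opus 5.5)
Your ingredients match the paper's: $\varphi_n<0$ wherever $c_nT_n\le0$, strict concavity on each component $J$ of $\{c_nT_n>0\}\cap(0,1)$, and $\mathcal H_n<|c_n|$ on $[0,1)$ to produce a positive point. However, the proposal does not actually prove \eqref{eq:nodal-positive-half-count}.

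\textbf{The count is left unproved.} You call the count ``the main obstacle'' and leave it unchecked. For the component touching $0$ you only say that ``evenness or oddness adjusts this count'', without stating what the count is.

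\textbf{Step~1 fails for small $n$.} For $n=2$ you get $r=(n-2)\pi/(2k_n)=0$ and $\varphi_2(0)=0$. Strict concavity on $(0,1)$ with $\varphi_2(0)=\varphi_2(1)=0$ then gives \emph{no} zero in $(0,1)$, not one. Also, $r=0$ is not a zero of $T_2$ in $(0,1)$, so there is nothing to match. For $n=1$, $r<0$ and Step~1 does not apply at all.

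\textbf{A smaller point in Step~3.} Take $n=2$ and $c_2>0$. The point $e_1=\pi/(2k_2)$ equals $1$ if $k_2=\pi/2$, which \eqref{eq:nodal-frequency-comparison} allows. Then $\varphi_2(e_1)=0$, and you need \eqref{eq:nodal-right-positive} to get the contradiction.

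\textbf{How to close the count.} Count zeros per component in terms of its endpoints, rather than counting arches. Fix $J=(\ell,r)$ and choose $p\in J$ with $\varphi_n(p)>0$. If $r<1$, take a point where $c_nT_n=|c_n|$; if $r=1$, take a point near $1$ from \eqref{eq:nodal-right-positive}. Then look at each endpoint $e$ of $J$.
\begin{itemize}
\item If $e\in(0,1)$, then $\varphi_n(e)=-\mathcal H_n(e)<0$, and concavity gives exactly one zero between $p$ and $e$.
\item If $e=1$, then $\varphi_n(1)=0$, and concavity gives no zero between $p$ and $e$.
\item If $e=\ell=0$, then $c_n>0$, so $\varphi_n(0)\ge0$ (by \eqref{eq:nodal-amplitude} for odd $n$, by oddness for even $n$). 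Concavity again gives no zero between $p$ and $e$.
\end{itemize}
Hence $\#Z_J(\varphi_n)=\one_{\{\ell>0\}}+\one_{\{r<1\}}$, which is the number of zeros of $T_n$ among the endpoints of $J$.

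Every zero of $T_n$ in $(0,1)$ is simple, so $c_nT_n$ changes sign there, and that zero is an endpoint of exactly one component. Summing over components gives \eqref{eq:nodal-positive-half-count} for all $n$ at once. This includes $n=1,2$, where $J=(0,1)$ contributes $0$.

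This is exactly the paper's argument. With it in place of your Steps~1--2, your simplicity argument (tangent line at a zero versus the positive point $p$) goes through, and so does your Step~3 once the $n=2$ fix above is made.
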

\begin{proof}
By \Cref{prop:nodal-representation},
\begin{equation}\label{eq:nodal-negative-components}
 c_nT_n(x)\le0\quad\Longrightarrow\quad
 \varphi_n(x)=c_nT_n(x)-\mathcal H_n(x)<0\qquad(0<x<1).
\end{equation}
Let $J=(\ell,r)$ be a connected component of
$\{x\in(0,1):c_nT_n(x)>0\}$. On $J$,
\begin{equation}\label{eq:nodal-strict-concavity}
 \varphi_n''=-k_n^2c_nT_n-\mathcal H_n''<0.
\end{equation}
If $r<1$, then $c_nT_n$ attains $|c_n|$ on $[\ell,r)$.
Since $\mathcal H_n<|c_n|$ there by \eqref{eq:nodal-amplitude},
continuity gives $p\in J$ with $\varphi_n(p)>0$.
If $r=1$, such a point exists by \eqref{eq:nodal-right-positive}.

If $\ell>0$, then $T_n(\ell)=0$ and
$\varphi_n(\ell)=-\mathcal H_n(\ell)<0$.
If $\ell=0$, then $c_n>0$ and $\varphi_n(0)\ge0$
by \eqref{eq:nodal-amplitude}.
Similarly, $\varphi_n(r)=-\mathcal H_n(r)<0$ if $r<1$,
whereas $\varphi_n(r)=0$ if $r=1$.

For $e\in\{\ell,r\}$ with $\varphi_n(e)\ge0$, concavity gives
\[
 \varphi_n((1-t)p+te)
 \ge(1-t)\varphi_n(p)+t\varphi_n(e)>0,
 \qquad 0<t<1.
\]
Thus there are no zeros strictly between $p$ and $e$.
If $\varphi_n(e)<0$, continuity gives at least one zero between
$p$ and $e$. If two existed, label them $z_1,z_2$ so that
$z_2=(1-t)z_1+tp$ for some $t\in(0,1)$. Concavity would give
\[
 0=\varphi_n(z_2)
 \ge(1-t)\varphi_n(z_1)+t\varphi_n(p)
 =t\varphi_n(p)>0,
\]
a contradiction. Consequently,
\[
 \#Z_J(\varphi_n)=\one_{\{\ell>0\}}+\one_{\{r<1\}}.
\]
At every zero $z\in J$, the tangent-line inequality for concave functions gives
\begin{equation}\label{eq:nodal-derivative-sign}
 0<\varphi_n(p)
 \le\varphi_n(z)+\varphi_n'(z)(p-z)
 =\varphi_n'(z)(p-z),
\end{equation}
so $\varphi_n'(z)\ne0$.

Each zero of $T_n$ in $(0,1)$ is simple, so $c_nT_n$ changes sign
there and the zero is an endpoint of exactly one such $J$.
Summing over these finitely many intervals and using
\eqref{eq:nodal-negative-components}, we obtain
\begin{equation}\label{eq:nodal-count-sum}
 \#Z_{(0,1)}(\varphi_n)
 =\sum_{J=(\ell,r)}
   \bigl(\one_{\{\ell>0\}}+\one_{\{r<1\}}\bigr)
 =\#Z_{(0,1)}(T_n).
\end{equation}

For odd $n$, $0<\mathcal H_n(0)<|c_n|$ gives
$\varphi_n(0)=c_n-\mathcal H_n(0)\ne0$.
For even $n$, $\varphi_n(0)=0$.
If $c_n<0$, then
$\varphi_n'(0)=c_nk_n-\mathcal H_n'(0)<0$.
If $c_n>0$, the component with $\ell=0$ contains a point $p>0$
with $\varphi_n(p)>0$, as above. Since $\varphi_n''<0$ on $(0,p)$,
\[
 0<\varphi_n(p)=\int_0^p\varphi_n'(x)\,dx
 \le p\varphi_n'(0).
\]
Thus $\varphi_n'(0)\ne0$.
\end{proof}

By \Cref{prop:nodal-representation}, $\varphi_n\in C^\infty(I)$.
Choose the sign \eqref{eq:nodal-right-sign}.
Equation~\eqref{eq:nodal-frequency-comparison} gives
\[
 m\pi\le k_{2m+1}<(m+\tfrac12)\pi\quad(m\ge0),\qquad
 (m-\tfrac12)\pi\le k_{2m}<m\pi\quad(m\ge1).
\]
The trigonometric zeros, \Cref{prop:nodal-zero-count}, and the reflection symmetry
\eqref{eq:nodal-reflection} give
\begin{align*}
 \#Z_I(\varphi_{2m+1})
 &=2\#\{j\in\mathbb Z_{\ge0}:(j+\tfrac12)\pi<k_{2m+1}\}
 =2m,\\
 \#Z_I(\varphi_{2m})
 &=2\#\{j\in\mathbb Z_{\ge1}:j\pi<k_{2m}\}+1
 =2m-1.
\end{align*}
The central value is nonzero in the first line and is a simple zero in
the second. Simplicity on $(0,1)$ follows from \Cref{prop:nodal-zero-count};
on $(-1,0)$ it follows from
\[
 \varphi_n'(-x)=(-1)^n\varphi_n'(x).
\]
Finally, at every zero $z$,
\[
 \varphi_n(z+h)=h\varphi_n'(z)+o(h),\qquad\varphi_n'(z)\ne0.
\]
Thus $\varphi_n$ changes sign at each of its $n-1$ zeros, and
$I\setminus Z_I(\varphi_n)$ has exactly $n$ connected components.
So we complete the proof of \Cref{thm:nodal}.

\end{document}